\newtheorem{theorem}{Theorem}[section]
\newtheorem{corollary}{Corollary}[section]
\newtheorem{lemma}{Lemma}[section]
\newtheorem{proposition}{Proposition}[section]
\newdefinition{definition}{Definition}[section]
\newdefinition{remark}{Remark}[section]
\newdefinition{example}{Example}[section]
\newenvironment{sqcases}{%
  \matrix@check\sqcases\env@sqcases
}{%
  \endarray\right.%
}
\def\env@sqcases{%
  \let\@ifnextchar\new@ifnextchar
  \left[
  \def\arraystretch{1.2}%
  \array{@{}l@{\quad}l@{}}%
}
\newcommand{\raisemath}[1]{\mathpalette{\raisem@th{#1}}}
\newcommand{\raisem@th}[3]{\raisebox{#1}{$#2#3$}}
\def\f{\longrightarrow}
\def\im{\Longrightarrow}
\def\N{\mathbb{N}}
\def\e{\varepsilon}
\def\l{\lambda}
\def\x{\bar{x}}
\def\u{\bar{u}}
\def\a{\alpha}
\def\b{\beta}
\def\<{\langle}
\def\>{\rangle}
\def\s{\sigma}
\def\S{\mathbb{S}}
\def\r{\bar{r}}
\def\ba{\bar{a}}
\def\G{\mathcal{G}}
\def\B{\mathcal{B}}
\def\e{\varepsilon}
\def\R{\mathbb{R}}
\def\J{\mathcal{J}}
\def\k{\bar{k}}
\def\inte{\textnormal{int}\,}
\def\clo{\textnormal{cl}\,}
\def\epi{\textnormal{epi}\,}
\def\bdry{\textnormal{bdry}\,}
\def\dom{\textnormal{dom}\,}
\def\conv{\textnormal{conv}\,}
\def\gk{\gamma_k}
\def\gkn{\gamma_{k_n}}
\def\Vphi{\varPhi}
\def\CO{\mathcal{C}}
\def\c{\mathsf{c}}
\def\-{\textnormal{-}}
\def\I{\mathcal{I}}
\newcommand*{\tran}{^{\mkern-1.5mu\mathsf{T}}}
\DeclareMathOperator{\sign}{sign}
\def\sp{\hspace{0.015cm}}
\def\bp{\hspace{-0.08cm}}
\def\bbp{\hspace{-0.04cm}}
\def\bbbp{\hspace{-0.02cm}}
\def\c{\bbp c}
\def\TV{_{\textnormal{\tiny{T.\sp V.}}}}
\def\sstar{\raisebox{0.15ex}{\scaleto{\mathscr{{\star}}}{3.5pt}}}
\def\ab{\scaleto{\sp\alpha,\beta}{4.5pt}}
\def\0gk{\scaleto{0,\gk}{3.5pt}}
\def\G{\scaleto{G}{4.3pt}}
\def\hzeta{\tilde{\zeta}}
\def\tallqed{\smash{\scalebox{.62}[0.96]{$\square$}}}
\begin{document}

\begin{frontmatter}

\title{Pontryagin-type maximum principle for a controlled sweeping process with nonsmooth and unbounded sweeping set}


\author{Chadi Nour\corref{}}
\address{Department of Computer Science and Mathematics, Lebanese American University, Byblos Campus, P.O. Box 36, Byblos, Lebanon}
\ead{cnour@lau.edu.lb}

\author{Vera Zeidan\corref{cor}}
\address{Department of Mathematics, Michigan State University, East Lansing, MI 48824-1027, USA}
\ead{zeidan@msu.edu}
\cortext[cor]{Corresponding author}

\begin{abstract} 
 The Pontryagin-type maximum principle derived in \cite{VCpaper} for optimal control problems involving sweeping processes  is generalized to the case where the sweeping set $C$ is nonsmooth and not necessarily bounded, namely, $C$ is the intersection of a finite number of zero-sublevel sets of  smooth functions. 
\end{abstract} 
\begin{keyword} controlled sweeping process \sep optimal control \sep Pontryagin-type maximum principle \sep intersection of zero-sublevel sets \sep nonsmooth analysis
\end{keyword}

\end{frontmatter}


\section{Introduction}\label{intro} 

{\it Sweeping processes} are dynamical systems involving the {\it normal cone} to a set called {\it sweeping set}. These systems were introduced in the papers \cite{moreau1,moreau2,moreau3} by J.J. Moreau in the context of plasticity and friction theory.  Different modifications of this model have since appeared in many applications such as  hysteresis, ferromagnetism, electric circuits, phase transitions, crowd motion problems, economics, etc. (see \cite{outrata} and its references).
This naturally motivated launching the subject  of optimal control over sweeping processes and  deriving {\it necessary optimality conditions}  phrased in terms of {\it Euler-Lagrange equation} or {\it Pontryagin-type maximum principle}. These results are mainly established using two different approaches, namely {\it discrete} approximations (see \cite{ccmn,ccmnbis,cmo0,cmo,cmo2, chhm2, chhm, cmn0}), and {\it continuous} approximations (see \cite{brokate,pinho,pinhonew, VCpaper, verachadi}). Another new method is recently introduced in \cite{palladino}, where, instead of approximating the sweeping process,  the authors formulated   {\it standard} optimal control problems having {\it auxiliary} controls and   the sweeping set is considered as {\it explicit} state constraints. These problems admit the same   optimal solution as that of the original problem.  As mentioned by the authors in \cite[Remark 2.13]{palladino}, the Pontryagin-type necessary conditions derived therein have  atypical nondegeneracy condition that requires  further analysis.  

Via an innovative {\it exponential penalization technique} that {\it approximates} the normal cone, 
the authors in  \cite{pinho,pinhoEr} derived a {\it smooth} Pontryagin-type maximum principle for {\it global} minimizers of a controlled sweeping process, in which the sweeping set $C$  is  {\it compact}  and is  the zero-sublevel set of a $\CO^2$-convex function $\psi$. The surprising  feature of this continuous-time approximation technique resides in the fact that, despite that the original sweeping process inherently has a state constraint, namely, $x(t)\in C$,  the presence of the penalty term in the formulation of the approximating systems  disposes of this state constraint,  since the set $C$ turns out  to be invariant for this carefully designed approximating systems. 
This special property of the  exponential penalization technique introduced in \cite{pinho}, has demonstrated to be instrumental  in \cite{pinhonum,verachadinum}, where  successful  numerical algorithms have been developed to efficiently compute  {\it good} approximations of  solutions for optimal control problems over sweeping processes via solutions of {\it standard} optimal control problems in which {\it no} state-constraint is  imposed.
 
The same exponential penalization technique is used in \cite{VCpaper, verachadi} to generalize the Pontryagin-type maximum principle of \cite{pinho} in several directions. This includes allowing the compact sweeping set $C$  to be the zero-sublevel set of a $\CO^{1,1}$-function  that is {\it not} necessarily convex, a  final-state endpoint constraint set to be present, the cost function to depend on both endpoints of the state, strong local minimizers to be considered, and new subdifferentials that are strictly smaller than Clarke and Mordukhovich subdifferentials to be employed.

The goal of this paper is to extend the Pontryagin-type maximum principle of \cite{VCpaper} to the case in which the sweeping set is the {\it intersection} of a finite number of zero-sublevel sets of  $\CO^{1,1}$-functions $\psi_i$, $i=1,\dots, r$, and hereby, we allow  $C$ to be nonsmooth. Moreover, the compactness assumption of $C$  can  now be replaced by a weaker condition permitting $C$ to be {\it unbounded}, and hence, our results here extend the  results in \cite{pinho,VCpaper, verachadi},  not only to the case where the sweeping set is  {\it nonsmooth},  but also {\it unbounded}, such as, polyhedral, star-shaped,  or a set with compact boundaries. 

The  exponential penalization technique  for $r > 1$ is far more complex than that for $r=1$, which was  treated in  \cite{pinho,VCpaper, verachadi}. In fact, if  we attempt here to emulate the case when $r=1$ by considering  $C$  as the zero-sublevel  set  of the single function  $\psi:= \hbox{max}\{\psi_i: i=1,\dots, r\}$,  then, when $r>1$ and  $\psi_i\in \CO^{1,1}$, for all $i\in\{1,\dots,r\}$,  $\psi$ is only guaranteed to be  Lipschitz.
 Hence, unlike the case when $r=1$, the normal cone to $C, N_C$,   is now given in terms of the subgradient  $\partial\psi$, as opposed to $\nabla \psi$,  and this set-valued map  is not in general Lipschitz. This  would lead to approximating the normal cone in the sweeping process by an {\it exponential penalty} term invoking  $\partial\psi$, which produces  a differential inclusion to which none of the standard results would apply. Thus,  this attempt would not be successful. 
 On the other hand, if we express the normal cone to $C$  at a boundary point $x$ as the cone combination of the  corresponding $\nabla\psi_i(x)$, the approximation of $N_C(x)$ in the sweeping process  via the {exponential penalty} technique leads, as in the case $r=1$, to an ordinary control system, which now involves $r$ {\it exponential penalty} terms, instead of only one term.  The presence of $r$- penalty terms causes a major obstacle when attempting to prove that $C$ is invariant for the resulting control system, unless one imposes a very {\it restrictive} assumption  such as: $\<\nabla\psi_i(x), \nabla\psi_j(x)\> \ge 0$  in a band around the boundary of $C$, which excludes  sets having acute angles, like triangles, etc.  During the final writing of this paper, we became aware of the manuscript \cite{pinho22} posted on arXiv in which this latter condition is imposed.   
 
 Therefore, it is clear that in order to deal with the intricacy of the situation for $r>1$ and to circumvent {\it both} major obstacles mentioned above, a new idea is required.  Our approach in this paper  is to  approximate  the {\it nonsmooth} max-function $\psi$, defining the sweeping set $C$, by a sequence  $(\psi_{\gk})_k$ of  $\CO^{1,1}$-functions, constructed by applying the \enquote{epi-multiple log-exponential} operator $\big(\frac{1}{\gk}\sstar\log\bbp\exp\big)$ to the \enquote{vecmax} function  $\psi$ (see \cite[Example 1.30]{rockwet}).  This allows us to approximate the nonsmooth set $C$ by a sequence of smooth sets $C^{\gk}$,  the zero-sublevel sets of $\psi_{\gk}$.  We employ the exponential penalization  technique for  $\psi_{\gk}$ which produces  the {\it same} approximating control system that has  $r$-penalty terms involving  $\nabla\psi_i$, $i=1,\cdots,r$. For this system, it turns out  that   $C^{\gk}$, instead of $C$, is actually {\it invariant}  under {\it no} extra assumption. 
 
 This result allowed us to move towards proving a Pontryagin-type maximum principle for $r>1$ and for also nonsmooth data. However, since the generalized  Hessian  of the function $\psi_{\gk}$ is {\it not} bounded near the given optimal state $\bar{x}$, another complexity  surfaces when showing the sequence of adjoint variables $p_{\gk}$ for the approximating problems has uniform bounded  variation. This is resolved by reverting to the  form of the approximating dynamic in terms of $(\nabla\psi_i)^{r}_{i=1}$ and by assuming a {\it local} condition at the values $\bar{x}(t)$ that invokes  the gradients of the corresponding active constraints and that is automatically satisfied when $r=1$. Note that, unlike \cite{pinho22},  we do {\it not} impose  any assumption like  $\nabla\psi_i=0$  on the complement in $C$ of  a band around  the boundary of $C$. The absence of this assumption causes the proof to be more challenging.
 
The layout of the paper is as follows. In  the next section, we first display our basic notations, then, we state our optimal control  problem $(P)$ over a sweeping process, and we list our hypotheses. In Section \ref{main}, we present our main result Theorem \ref{thm1mpcomp}, in which we derive nonsmooth Pontryagin-type maximum principle for {\it local} minimizers of $(P)$.  Section \ref{prepa} consists of preparatory results  that  are instrumental for the proof of Theorem \ref{thm1mpcomp} established in Section \ref{proofthm1mp}.  To promote continuous flow in the presentation of the results,  some of the proofs are provided in  the appendix (Section \ref{auxresults}), where we also show some auxiliary results that will be used in several places of the paper. An example  illustrating the utility of  Theorem \ref{thm1mpcomp}  is provided in Section \ref{auxresults}.

\section{Preliminaries} \label{Preli} 

\subsection{Notations}\label{Notations} 

We begin this subsection by briefly presenting the basic notations used in this paper. We use $\|\cdot\|$ and $\<\cdot,\cdot\>$ for the Euclidean norm and the usual inner product, respectively. The open unit ball, the closed unit ball, and the unit sphere are denoted by $B$, $\bar{B}$, and $\S$, respectively. For $x\in\R^n$ and $\rho\geq0$, the open ball, the closed balls, and the sphere of radius $\rho$ centered at $x$ are written as $B_{\rho}(x)$, $\bar{B}_{\rho}(x)$, and $\S_{\rho}(x)$, respectively. For a set $A \subset \R^n$, $\inte A$, $\bdry A$, $\clo A$, $\conv A$, and  $A^c$ designate  the interior, the boundary, the closure, the convex hull, and the complement of $A$, respectively. The set $A\subset\R^n$ is said to be star-shaped if there exists $a_o\in A$, called center of $A$, such that the closed interval $[a_o,x]\subset A$ for all $x\in A$.  When $A\subset\R^n$ is closed and convex, we denote by $h_A$ the support function of $A$. For $f\colon\R^n\f\R\cup\{\infty\}$ an extended-real-valued function, $\dom f$ is the effective domain of $f$ and $\epi f$ is its epigraph. The Lebesgue space of $p$-integrable functions $f\colon [a,b]\f\R^n$ is denoted by $L^p([a,b];\R^n)$. The norms in $L^p([a,b];\R^n)$ and $L^{\infty}([a,b];\R^n)$ (or $C([a,b];\R^n)$) are written as $\|\cdot\|_p$ and $\|\cdot\|_{\infty}$, respectively. The space  $W^{1,1}([a,b];\R^n)$ denotes the set of all absolutely continuous functions $x\colon [a,b] \to \R^n$. The set of all functions $f\colon [a,b]\f\R^n$ of bounded variations is denoted by $BV([a,b];\R^n))$. The space $C^*([a,b]; \R)$ denotes the dual of $C([a,b];\R)$ equipped with the supremum norm. We denote by   $\|\cdot\|\TV$ the induced norm on $C^*([a,b]; \R)$. By Riesz representation theorem, each element in  $C^*([a,b]; \R)$ can be interpreted as an element in the space of finite signed Radon measures on $[a,b]$ equipped with the weak* topology. For the set of $m\times n$-matrix functions  on $[a,b]$, we use $\mathscr{M}_{m\times n}([a, b])$. For $A\subset\R^d$ compact, $C(A;\R^n)$ denotes to the set of  continuous functions from  $A$ to $\R^n$. 

In what follows, we display some notations from {\it nonsmooth analysis}. For standard references, see the monographs \cite{clarkeold,clsw,mordubook,rockwet}. Let $A$ be a nonempty and closed subset of $\R^n$, and let $a\in A$. The {\it proximal}, the {\it Mordukhovich} ({also known as \it limiting}), and the {\it Clarke normal} cones to $A$ at $a$ are denoted by $N^P_A(a)$, $N_A^L(a)$, and $N_A(a)$, respectively. For the {\it Clarke tangent} cone to $A$ at $a$, we use $T_A(a)$. Given a {\it lower semicontinuous} function $f\colon\R^n\f\R\cup\{\infty\}$, and  $x \in \dom f$, the \textit{proximal}, the {\it Mordukhovich} (or \textit{limiting}), and the \textit{Clarke subdifferential}  of $f$ at  $x$ are denoted by  $\partial^P f (x)$, $\partial^L f(x)$, and $\partial f(x)$, respectively. Note that if  $x\in\inte (\dom{f})$ and $f$ is Lipschitz near $x$, \cite[Theorem 2.5.1]{clarkeold} yields that the Clarke subdifferential of $f$ at  $x$ coincides  with the {\it Clarke generalized gradient} of ${f}$ at $x$, also denoted here by $\partial {f}(x)$. If ${f}$ is $\CO^{1,1}$ near $x\in \inte(\dom f)$,  $\partial^2f(x)$ denotes the {\it Clarke generalized Hessian} of ${f}$ at $x$. For  $g\colon\R^n \f \R^n$ Lipschitz near $x\in\R^n$, $\partial{g}(x)$ denotes the {\it Clarke generalized Jacobian} of $g$ at $x$.

\subsection{Statement of the problem $(P)$, and assumptions}\label{Assump} 
In this paper, we consider the following fixed time Mayer problem 
$$\begin{array}{l} (P)\colon\; \hbox{Minimize}\;g(x(0),x(T))\vspace{0.1cm}\\ \hspace{0.9cm} \hbox{over}\;(x,u)\;\hbox{such that}\;u(\cdot)\in \mathscr{U}\;\hbox{and}\\[2pt] \hspace{0.9cm}  \begin{cases} (D)  \begin{sqcases}\dot{x}(t)\in f(t,x(t),u(t))-\partial \varphi(x(t)),\;\;\hbox{a.e.}\;t\in[0,T],\\x(0)\in C_0\subset \dom \varphi, \end{sqcases}\vspace{0.1cm}\\ x(T)\in C_T, \end{cases}
    \end{array}$$
    where $T>0$ is fixed,  $g\colon\R^n\times\R^n\f\R\cup\{\infty\}$, $f\colon[0,T]\times\R^n\times \R^m\f\R^n$, $\varphi\colon\R^n\f\R\cup\{\infty\}$ is lower semicontinuous,  $\partial\varphi$ stands for the Clarke subdifferential  of $\varphi$, $C:=\dom\varphi$ is the intersection of the zero-sublevel sets of a finite sequence of functions $\psi_i\colon\R^n\f\R$, $i=1,\dots,r$, $C_0\subset C$ is the initial constraint set, $C_T\subset\R^n$ is the terminal constraint set, and, for $U\colon[0,T]\rightrightarrows\R^{m}$ a multifunction, the set of control functions $\mathscr{U}$ is defined as
\begin{equation*}\label{setu}
\mathscr{U}:=\left\{ u\colon[0,T]\f\R^m \; \;  \hbox{is  measurable and}\;\; u(t) \in U(t) \;\;\hbox{a.e.}\; t\in [0,T]\right\}. 
\end{equation*}
 A pair $(x,u)$ is {\it admissible} for $(P)$ when $x\colon[0,T]\f\R^n$ is absolutely continuous,   $u\in \mathscr{U}$, and $(x,u)$  satisfies the {\it perturbed sweeping process} $(D)$, called the dynamic of $(P)$. An admissible pair $(\x,\u)$ is said to be a {\it strong local minimizer} if there exists $\delta >0$ such that $$g(\x(T))\le g(x(T)), \;\; \forall (x,u) \;\;\hbox{admissible for}\;\; (P)
\;\hbox{and satisfying}\;\|x- \x\|_{\infty}\leq\delta.	$$
Note that the admissibility  of  a pair $(x,u)$ yields from $(D)$ that $x(t)\in C$, $\forall t\in[0,T]$.

Now let $(\x,\u)$ be a strong local minimizer of $(P)$. Next, we present our assumptions on the data of $(P)$ needed to derive our Pontryagin-type maximum principle for $(\x,\u)$. Note that not all these assumptions are needed for our intermediate and auxiliary results. Moreover, a global version of (A1) is introduced later in Section \ref{prepa}. Define the set  $ \mathbb{U}:=\bigcup_{t\in[0,T]} U(t)$.
\begin{enumerate}[label=\textbf{A\arabic*}:]
\item For fixed $(x,u)\in C_{\x}\times  \mathbb{U } $, $f(\cdot, x,u)$ is Lebesgue-measurable; and there exists $M_{\ell}>0$ such that, for $t \in [0,T]$ a.e., we have: $(x, u)\mapsto f (t, x, u)$ is continuous on $(C \cap \bar{B}_{\delta}(\x(t)))\times U(t)$; for all $u\in U(t)$, $x\mapsto f (t, x, u)$ is $M_{\ell}$-Lipschitz on $C \cap \bar{B}_{\delta}(\x(t))$; and  $\|f(t,x,u)\| \leq M_{\ell}$  for all $(x,u)\in (C \cap \bar{B}_{\delta}(\x(t))) \times U(t).$ 
\item The set $C:=\dom \varphi\not=\emptyset$ is given by 
\begin{equation} \label{Cdef} C:=\bigcap_{i=1}^{r}C_i,\;\hbox{where}\;C_i:=\{x\in\R^n : \psi_i(x)\leq0\}\;\hbox{and}\end{equation} 
$(\psi_i)_{1\leq i\leq r}$ is a family of functions $\psi_i\colon\R^n\f\R$.
\begin{enumerate}[label=\textbf{A2.\arabic*}:]\vspace{0.1cm}
\item There exists $\rho>0$ such that for each $i$, the function $\psi_i$ is $\CO^{1,1}$ on $ C+\rho{B}$.
\item There is a constant $\eta>0$ such that $$ \left\|\sum_{i\in\I^0_c}\lambda_i\nabla\psi_i(c)\right\|>2\eta, \;\;  \forall c\in \{x\in C: \I^0_x\not=\emptyset\},$$ where $\I^0_x:=\{i\in\{1,\dots,r\} : \psi_i(x)=0\}$ and $(\l_i)_{i\in\I^0_c}$ is any sequence of nonnegative numbers satisfying $\sum_{i\in\I^0_c}\l_i=1$.
\item There exists $b\in(0,1)$ such that  $$\sum_{\substack{i\in\I^0_{\x(t)}\\ i\not=j}}|\<\nabla \psi_i(\x(t)),\nabla\psi_j(\x(t))\>|\leq b\|\nabla\psi_j(\x(t))\|^2,\;\;\forall t\in I^0(\x)\;\,\hbox{and}\;\,\forall j\in\I^0_{\x(t)},$$ where $I^0(\x):=\{t\in[0,T] : \x(t)\in\bdry C\}=\{t\in [0,T] : \I^0_{\x(t)}\not=\emptyset\}$.
\item There exist $y_o\in\R^n$ and $R_o>0$ such that:
\begin{enumerate}[leftmargin=1.1cm, label=\textbf{{({\it \roman*}\sp\sp)}}:]
\item For all $c\in(\bdry C)\cap \S_{R_o}(y_o)$, we have $(y_o-c)\not\in N_C(c)$.
\item $\x(t)\in B_{R_o}(y_o)$ for all $t\in [0,T]$.
\end{enumerate}
\item The set $C$ has a connected interior.\footnote{This assumption is only imposed to obtain the required quasiconvexity of $C$ for the extension function $\Vphi$ of $\varphi$, see Proposition \ref{prop1}$(ii)$. Thus, when such an extension is readily available, as is the case  when $\varphi$ is the {\it indicator} function of $C$, assumption (A2.5) would be superfluous. For more information about this assumption, see \cite[Remark 3.2]{VCpaper}.}
\end{enumerate}
\item The function $\varphi$ is globally Lipschitz on $C$ and $\CO^{1}$ on $\inte C$, and the function $\nabla\varphi$ is globally Lipchitz on $\inte C$.
\item The following assumptions on $C_0$, $C_T$ and $U$ hold: \begin{enumerate}[label=\textbf{A4.\arabic*}:]
\item The set $C_0\subset C$ is nonempty and closed.
\item The set $C_T\subset\R^n$ is nonempty and closed. 
\item The graph of $U(\cdot)$ is a $\mathscr{L}\times\mathscr{B}$ measurable set, and,  for $t\in [0,T],$ $U(t)$ is closed, and bounded uniformly in $t$.
\end{enumerate}
\item There exists  $\tilde{\rho}>0$ such that $g$ is $L_g$-Lipschitz on $\tilde{C}_0(\delta)\times\tilde{C}_T(\delta)$, where  \begin{equation*}\label{c0deltactdelta}  \tilde{C}_0(\delta):=\big[\big(C_0\cap \bar{B}_{\delta}(\x(0))\big)+\tilde{\rho}\bar{B}\big]\cap C\,\;\hbox{and}\;\, \tilde{C}_T(\delta):=\big[\big(C_T\cap \bar{B}_{\delta}(\x(T))\big)+\tilde{\rho}\bar{B}\big]\cap C.\end{equation*}
\item The set $f(t,x,U(t))$ is convex for all $x\in C \cap \bar{B}_{\delta}(\x(t))$ and  $t\in [0,T]$ \textnormal{a.e.}\footnote{When  $C_T=\R^n$,  this assumption is {\it not} required for the main result, i.e., Theorem \ref{thm1mpcomp}.}
\end{enumerate}

\begin{remark} \label{assumptionH}  Assumption (A2.4) is {\it weaker} than the compactness of $C$. In fact,  by Lemma \ref{conditionH}, assumption (A2.4)$(i)$ is satisfied by an {\it arbitrary large} $R_o$  when the set $C$ has a compact  boundary  or  is star-shaped (which includes {\it unbounded} convex and polyhedral sets). Hence,  taking  $R_o$ large enough, we guarantee that also  (A2.4)$(ii)$ is valid by those sets.

\end{remark}

\begin{remark}\label{epsilon}  One can easily prove that (A2.1)-(A2.2) imply that, for each $x\in \bdry C$, the family of vectors $\{\nabla\psi_i(x)\}_{i\in\I^0_x}$ is positive-linearly independent. On the other hand, (A2.1) and (A2.3) imply that for each $t\in I^0(\x)$, the family of vectors $\{\nabla\psi_i(\x(t))\}_{i\in\I^0_{\x(t)}}$ is linearly independent.
\end{remark}

\section{Main result} \label{main} In this section,  we display  our  Pontryagin-type maximum principle for strong local minimizers of $(P)$ whose proof will be given to Section \ref{proofthm1mp}.  We employ in its statement   the following {\it nonstandard} notions of subdifferentials, which are {\it strictly smaller} than their standard counterparts:
\begin{itemize}
\item $\partial_\ell\varphi$ and $\partial^2_\ell\varphi$ are the {\it extended  Clarke generalized gradient} and the {\it extended  Clarke generalized Hessian} of $\varphi$ defined on $C$, respectively (see \cite [Equation (9)-(10)]{VCpaper}). Note that if $\partial_\ell\varphi(x)$ is a singleton, then we use the notation $\nabla_{\bp\ell}$ instead of  $\partial_\ell$.
\item $\partial^{\sp x}_\ell f (t,\cdot,u)$ is the {\it extended  Clarke generalized Jacobian} of $f(t,\cdot,u)$ defined  $C \cap \bar{B}_{\delta}(\x(t))$ (see \cite [Equation (12)]{VCpaper}).
\item $\partial^2_\ell\psi$ is the {\it Clarke generalized Hessian relative} to $\inte C$ of $\psi$ (see \cite [Equation (11)]{VCpaper}).
\item $\partial^L_\ell g$ is the {\it limiting subdifferential} of $g$ {\it relative} to  $\inte \big(\tilde{C}_0(\delta)\times \tilde{C}_1(\delta)\big)$ (see \cite [Equation (8)]{VCpaper}).
\end{itemize}
For given $x(\cdot)\in W^{1,1}( [0,T]; C)$, we define the sets
\begin{equation}\label{Ibtaui}
I_i^{\-}(x):=\{t\in [0,T]:  x(t)\in \inte C_i\}\;\;\hbox{and}\;\;I_i^{0}(x):= [0,T]\setminus I_i^{\-}(x), \;\; \forall i=1,\dots, r.
\end{equation} 
\begin{theorem}[Generalized Maximum Principle for $(P)$] \label{thm1mpcomp} Let $(\x,\u)$ be a strong local minimizer for $(P),$ and assume that {\textnormal{(A1)-(A6)}} hold. Then, there exist an adjoint vector $p \in BV([0,T];\R^n)$, a finite sequence of finite signed Radon measures $(\nu^i)_{i=1}^{r}$ on $[0,T]$ with $\nu_i$ supported in $I_i^{0}(\x)$ for $i=1,\dots,r$, a finite sequence of nonnegative functions $(\xi^i)_{i=1}^{r}\in L^\infty([0,T];\R^+)$ with $\xi_i$ supported in $I_i^{0}(\x)$ for $i=1,\dots,r$, $L^{\infty}$-functions $\zeta(\cdot)$, $\theta(\cdot)$ and a finite sequence $(\vartheta_i(\cdot))_{i=1}^{r}$ in $\mathscr{M}_{n\times n}([0,T])$, and $\l\geq 0$ satisfying the following\sp$:$
\begin{enumerate}[$(i)$]
\item {\bf(The primal-dual admissible equation)}
\begin{enumerate}[$(a)$]
\item $\dot{\x}(t)= f(t, \x(t),\u(t))- \nabla_{\bp\ell}\sp\sp\varphi(\x(t))-\sum_{i=1}^{r}\xi^i(t) \nabla\psi_i(\x(t)),\;\; \forall\sp t\in [0, T]\; \textnormal{a.e.},$
\item  $\psi_i(\x(t))\le 0,\;\; \forall\sp t\in [0,T]$ and $\,\forall i\in \{1,\dots,r\};$
\end{enumerate}
\item {\bf (The nontriviality condition)} $$\displaystyle \|p(T)\|+ \l=1;$$ 
\item {\bf (The adjoint equation)}  $\forall\sp t\in [0,T]\; \textnormal{a.e.},$ 
$$(\zeta(t),\theta(t))\in \;\partial^{\sp x}_\ell f(t,\x(t),\u(t))\times \partial^2_\ell\varphi(\x(t)),$$ 
$$\vartheta_i(t)\in \partial^2_\ell\psi_i(\x(t))\;\,\hbox{for}\;\,i=1,\dots,r,\vspace{0.1cm}$$
 and, for any $z\in C([0,T];\R^n),$ we have\vspace{0.1cm} \begin{eqnarray*}\int_{[0,T]}\<z(t),dp(t)\>&= &\int_0^T \left\<z(t),\left(\theta(t)-\zeta(t)\tran\right) p(t)\right\>\sp dt \\[3pt]&+& \sum_{i=1}^{r} \int_0^T \xi^i(t)\left\<z(t),\vartheta_i(t)p(t)\right\>\sp dt\\ &+ &  \sum_{i=1}^{r} \int_{[0,T]} \<z(t),\nabla \psi_i(\x(t))\>\sp d\nu^i(t),\end{eqnarray*}
where the  left integral is the Riemann–Stieltjes integral of $z$ with respect to $p;$
\item {\bf (The complementary slackness conditions)} For $i=1,\dots,r,$ we have$\sp:$
\begin{enumerate}[$(a)$] \item $\xi^i(t)=0,\;\;\forall\sp t\in I_i^{\-}(\x),$
\item $\xi^i(t)\<\nabla\psi_i(\x(t)),p(t)\>=0,\;\;\forall\sp t\in [0, T]\; \textnormal{a.e.};$
\end{enumerate}
\item {\bf (The transversality condition)} $$ (p(0),-p(T))\in \l\partial_\ell^L g(\x(0),\x(T))+ \big[N_{C_0}^L(\x(0))\times N^L_{C_T}(\x(T))\big];$$
\item {\bf (The maximization condition)} $$\max_{u\in U}\left\<f(t,\x(t),u), p(t)\right\>\;\hbox{is attained at}\;\,\u(t)\;\,\hbox{for}\;\,\textnormal{a.e.}\;t\in [0,T].$$
\end{enumerate}
Furthermore, if $C_T=\R^n$ then, $\l=1$ and the assumption \textnormal{(A6)} is discarded.
\end{theorem}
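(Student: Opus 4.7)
The plan is to approximate $(P)$ by a family of smooth, state-constraint-free optimal control problems $(P_{\gk})$, apply to each the single-constraint maximum principle of \cite{VCpaper}, and pass to the limit as $k\to\infty$. The approximation proceeds in two stages, as sketched in the introduction. First, the nonsmooth function $\psi:=\max_i\psi_i$ defining $C$ is replaced by the $\CO^{1,1}$ log-sum-exp surrogate $\psi_{\gk}(x):=\frac{1}{\gk}\log\bigl(\sum_{i=1}^r e^{\gk\psi_i(x)}\bigr)$, whose zero-sublevel set $C^{\gk}\supset C$ decreases to $C$ as $k\to\infty$. Second, the normal cone to $C^{\gk}$ is approximated by an exponential penalty term, giving the unconstrained dynamic $\dot{x}=f(t,x,u)-\nabla\varphi(x)-\xi_{\gk}(t)\nabla\psi_{\gk}(x)$ with $\xi_{\gk}(t):=\frac{1}{\ep_k}\exp(\psi_{\gk}(x(t))/\ep_k)$ for a suitably chosen $\ep_k\downarrow 0$. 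Since $\nabla\psi_{\gk}(x)=\sum_i\alpha^{\gk}_i(x)\nabla\psi_i(x)$ with $\alpha^{\gk}_i(x):=e^{\gk\psi_i(x)}/\sum_j e^{\gk\psi_j(x)}$ a convex combination, this decomposes as $\sum_i\xi^i_{\gk}(t)\nabla\psi_i(x)$ with $\xi^i_{\gk}:=\xi_{\gk}\alpha^{\gk}_i\ge 0$, and the preparatory results of Section~\ref{prepa} yield the crucial invariance of $C^{\gk}$ (not $C$) for this dynamic, under no extra hypothesis.

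Next, for each sufficiently large $k$, I produce a strong local minimizer $(x_{\gk},u_{\gk})$ of $(P_{\gk})$ with $x_{\gk}\to\x$ uniformly, via an Ekeland-type argument near $(\x,\u)$ combined with the invariance of $C^{\gk}$. Applying the maximum principle of \cite{VCpaper} to the single $\CO^{1,1}$ constraint $\psi_{\gk}\le 0$ then yields an absolutely continuous adjoint $p_{\gk}$, a cost multiplier $\l_{\gk}\ge 0$ with $\|p_{\gk}(T)\|+\l_{\gk}=1$, measurable selections $\zeta_{\gk}\in\partial^x_\ell f(\cdot,x_{\gk},u_{\gk})$, $\theta_{\gk}\in\partial^2_\ell\varphi(x_{\gk})$, and $\vartheta_{\gk}\in\partial^2_\ell\psi_{\gk}(x_{\gk})$, together with the corresponding adjoint ODE, the complementary slackness $\xi_{\gk}(t)\langle\nabla\psi_{\gk}(x_{\gk}(t)),p_{\gk}(t)\rangle=0$, transversality at both endpoints, and Pontryagin maximization of $\langle f,p_{\gk}\rangle$ at $u_{\gk}$. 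A crucial algebraic step is to rewrite this adjoint ODE in terms of the individual $\nabla\psi_i$ and the generalized Hessians $\vartheta^i_{\gk}\in\partial^2_\ell\psi_i(x_{\gk})$ weighted by $\xi^i_{\gk}$, so that the ill-behaved $\nabla^2\psi_{\gk}$ (which blows up with $\gk$ near $\bdry C$) is replaced by bounded pieces together with explicit rank-one contributions involving $\nabla\psi_i(x_{\gk})\otimes\nabla\psi_j(x_{\gk})$; the latter are absorbed at the limit into the singular parts of the measures $\nu^i$.

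The main obstacle is obtaining uniform estimates in $k$: $L^\infty$-bounds on $(x_{\gk},\dot x_{\gk},p_{\gk})$, a uniform $BV$-bound on $p_{\gk}$, and $L^1$-bounds on each $\xi^i_{\gk}$ individually. Here assumptions (A2.2)--(A2.4) intervene critically: the visibility condition (A2.4) together with (A1) confines $x_{\gk}$ and $\dot x_{\gk}$; the positive linear independence (A2.2) keeps $\|\nabla\psi_{\gk}\|$ bounded below near $\bdry C$; and the local linear independence (A2.3) of $\{\nabla\psi_i(\x(t))\}_{i\in\I^0_{\x(t)}}$ makes the associated Gram matrix invertible with a quantitative bound, allowing each $\xi^i_{\gk}$ to be solved for and bounded in $L^1$ in terms of $p_{\gk}$ and $\dot x_{\gk}$. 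Once these estimates are in place, Helly's selection theorem produces a $BV$-limit $p$ of $p_{\gk}$, and a diagonal extraction gives weak-$*$ limits $\mu^i$ of $\xi^i_{\gk}(t)\,dt$ as finite signed Radon measures, which are supported in $I^0_i(\x)$ by the $k$-level complementary slackness. Decomposing each $\mu^i$ into an $L^\infty$-density $\xi^i$ and a singular part $\nu^i$ and passing to the limit delivers (i), (iii), and (iv); outer semicontinuity of $\partial_\ell^L g$, $N^L_{C_0}$, and $N^L_{C_T}$ yields (v); (A6) enters via the standard measurable-selection/convexification argument to recover (vi); and (ii) is inherited directly from $\|p_{\gk}(T)\|+\l_{\gk}=1$. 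When $C_T=\R^n$, the terminal normal cone is trivial, forcing $\l\ne 0$ by nontriviality, which permits the normalization $\l_{\gk}\equiv 1$ and thereby dispenses with the convexity hypothesis (A6).
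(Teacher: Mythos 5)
Your overall architecture is right (log--sum--exp smoothing of $\psi$, exponential penalty with invariance of $C^{\gk}$, maximum principle for each approximating problem, limit passage, and the final reduction of the noncompact case to the compact one via $\psi_{r+1}$), but there are concrete errors in the way you propose to extract the dual objects, and the crux of the proof -- the uniform $BV$ bound on $p_{\gk}$ -- is essentially elided.

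First, the construction of $\xi^i$ and $\nu^i$ as the Lebesgue decomposition of a single limit measure $\mu^i:=\lim \xi^i_{\gk}\,dt$ is wrong. Since the invariance of $C^{\gk}(k)$ gives the uniform bound $0\le\xi^i_{\gk}(t)\le 2\bar M/\eta$ (Theorem~\ref{DgktoD(k)}$(ii)$), the weak$^*$ limit of $\xi^i_{\gk}\,dt$ is automatically an $L^\infty$-density; there is no singular part, so your $\nu^i$ would vanish identically. In the paper $\nu^i$ arises from an entirely \emph{different} sequence of measures, $d\nu^i_{\gk}:=\gk\,\xi^i_{\gk}(t)\,\langle\nabla\psi_i(x_{\gk}(t)),p_{\gk}(t)\rangle\,dt$, carrying an extra factor $\gk$; it is the uniform $BV$ bound on $p_{\gk}$ that makes $(\nu^i_{\gk})_k$ tight, and the singular content of $\nu^i$ is created by that extra factor, not by a decomposition of $\xi^i$. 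Second, (A2.3) plays no role in bounding $\xi^i_{\gk}$: those $L^\infty$ bounds come for free from invariance, with no Gram-matrix inversion needed. (A2.3) -- in the quantitative neighborhood version of Lemma~\ref{A2.2lemma} -- is used precisely for the $BV$ bound, to dominate the off-diagonal cross terms $\langle\nabla\psi_i,\nabla\psi_j\rangle$ in the sum $\sum_{i\in\I^{\ba}_{\x(t)}}\tfrac{d}{dt}|\langle p_{\gk},\nabla\psi_i(x_{\gk})\rangle|$ by the diagonal squares, which yields the $L^1$ control of $\gk^2 e^{\gk\psi_i}\|\nabla\psi_i\|\,|\langle\nabla\psi_i,p_{\gk}\rangle|$ on the active band. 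This estimate is the genuinely new and most delicate step, and it cannot be carried out for the single constraint $\psi_{\gk}$ because $\nabla^2\psi_{\gk}$ is unbounded; the paper works directly with the $r$-penalty form in Proposition~\ref{prop4old}. Finally, the penalty weight should be $\gk e^{\gk\psi_{\gk}(x)}$, not $\tfrac{1}{\ep_k}e^{\psi_{\gk}(x)/\ep_k}$ with an independent $\ep_k$: the identity $\gk e^{\gk\psi_{\gk}}\nabla\psi_{\gk}=\sum_i\gk e^{\gk\psi_i}\nabla\psi_i$ that makes the $r$ penalty pieces clean (and gives the uniform bound $\gk e^{\gk\psi_i}\le 2\bar M/\eta$ on $C^{\gk}(k)$) depends on using the \emph{same} parameter for the smoothing and the penalty.
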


\section{Preparatory results} \label{prepa} In this section, we present  preparatory results that are fundamental for the proof of Theorem \ref{thm1mpcomp}. In addition,  an existence theorem for  an optimal solution of $(P)$ is provided in Proposition \ref{existencesolP}. 

We assume throughout this section that $C$ is {\it compact}.  The last step in  the proof of Theorem  \ref{thm1mpcomp} (Section  \ref{proofthm1mp}) provides a technique that permits replacing  the compactness of $C$   by the weaker assumption (A2.4). 

 We denote by $\bar{M}_\psi$ a {\it common upper bound} on $C$ of the finite sequence $(\|\nabla\psi_i(\cdot)\|)_{i=1}^{r}$, and by $2M_\psi$ a {\it common Lipschitz constant}  of the finite family $\{\nabla \psi_{i}\}_{i=1}^{r}$  over the compact set  $C+\frac{\rho}{2}\bar{B}$ such that $\bar{M}_\psi\geq2\eta$ and  $M_\psi\geq \frac{4\eta}{\rho}$. 

 As for the case $r=1$ in \cite{VCpaper},   (A2.2) and the compactness of  $C$ imply  the existence of $\e>0$ such that for each $i\in\{1,\dots,r\}$ we have \begin{equation} \label{pisieps} \big[x\in C\;\hbox{and}\; \|\nabla\psi_i(x)\|\leq \eta\big]\implies \psi_i(x)<-\e.\end{equation}

\subsection{Properties of $C$, extension of $\varphi$, and new notations} We give in the following proposition some important consequences of our assumptions on the set $C$ and the function $\varphi$.

\begin{proposition}[Properties of $C$ $-$ Extension of $\varphi$] \label{prop1} Under \textnormal{(A2.1)-(A2.2)}, we have the following$\sp:$
\begin{enumerate}[$(i)$]
\item  $C$ is amenable,\textnormal{\footnote{See \cite[Chapter 10]{rockwet}.}} $\frac{\eta}{M_\psi}$-prox-regular,\textnormal{\footnote{For $\rho > 0$, the closed set $A\subset\R^n$ is said to be $\rho$-\textit{prox-regular}  if for all $a\in A$ and for all unit vector $\zeta\in N_A^P(a)$, we have  $\langle \zeta,x-a\rangle \leq \frac{1}{2\rho} \|x-a\|^2$ for all $x\in A.$ This latter inequality is known as the {\it proximal normal inequality}. For more information about prox-regularity, see \cite{prt} and references therein.}} epi-Lipschitz with $C=\clo(\inte C)$,\textnormal{\footnote{A closed set $A\subset\R^n$ is said to be {\it epi-Lipschitz} if for all $a\in A$, the Clarke normal cone of $A$ at $a$ is {\it pointed}, that is, $N_A(a)\cap-N_A(a)=\{0\}$. For more information about this property, see \cite{clarkeold,clsw,rockwet}.}} and, for all $x\in\bdry C$ we have 
\begin{equation} \label{proxformulaprox} N_C(x)=N^P_C(x)=N^L_C(x)=\Bigg\{\sum_{i\in\I_x}\lambda_i\nabla\psi_i(x) : \l_i\geq 0\Bigg\}\not=\{0\}.\vspace{-0.2cm}\end{equation}  Moreover, we have$\sp:$
$$C=\bigcap_{i=1}^{r}\inte C_i=\bigcap_{i=1}^{r}\{x\in\R^n : \psi_i(x)<0\}\not=\emptyset\;\;\hbox{and}\;\;\bdry C=C\cap \bigg(\bigcup_{i=1}^{r} \bdry C_i\bigg)\not=\emptyset.$$
\item If also \textnormal{(A2.5)} holds, then $\inte C$ is quasiconvex.\textnormal{\footnote{A set $A\subset\R^n$ is {\it quasiconvex} (\cite{brudnyi}) if there exists $c\geq 0$ such that any two points $a,\,b$ in $A$ can be joined by a polygonal line $\gamma$ in $A$ satisfying $l(\gamma) \leq c\sp \|a-b\|,$ where $l(\gamma)$ denotes the length of $\gamma$. }} Furthermore,  if in addition \textnormal{(A3)} holds, then there exists a function $\Vphi\in \CO^{1,1}(\R^n)$ such that\sp$:$\begin{itemize}
 \item $\Vphi$ is bounded on $\R^n$ and equals to $\varphi$ on $C$.
 \item $\Vphi$ and $\nabla\Vphi$ are globally Lipschitz on $\R^n$.
 \item For all $x\in C$ we have \begin{equation} \label{eq0}\partial \varphi(x)=\{\nabla \Vphi(x)\} + N_C(x).\end{equation}	
 \end{itemize}
\end{enumerate}
\end{proposition}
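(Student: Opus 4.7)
The plan is to view $C$ as the pre-image $C=F^{-1}(\R^r_{-})$ under the smooth map $F:=(\psi_1,\dots,\psi_r)$, and to extract all the conclusions of part $(i)$ from the basic constraint qualification furnished by (A2.2). More precisely, (A2.2) gives that for every $c\in\bdry C$ and every family of nonnegative scalars $(\l_i)_{i\in\I^0_c}$ summing to $1$ one has $\|\sum_{i\in\I^0_c}\l_i\nabla\psi_i(c)\|>2\eta$; by homogeneity, this is exactly the Mangasarian--Fromovitz/positive-linear-independence condition for the active gradients. This is precisely the hypothesis needed to invoke the standard inverse-image calculus (for instance \cite[Thm.~6.14 \& Ex.~10.24]{rockwet}), which yields simultaneously amenability of $C$, the exactness $N_C=N_C^P=N_C^L$, and the claimed cone formula in \eqref{proxformulaprox}. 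The nontriviality $N_C(x)\ne\{0\}$ at $x\in\bdry C$ is then immediate since the gradients of active constraints are nonzero by (A2.2).

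The prox-regularity constant $\eta/M_\psi$ is the main quantitative point. The plan is to verify the proximal normal inequality directly: pick $a\in\bdry C$, a unit $\zeta\in N_C^P(a)$, and use the cone formula to write $\zeta=\sum_{i\in\I^0_a}\l_i\nabla\psi_i(a)$ with $\l_i\ge 0$. Setting $\Lambda:=\sum_i\l_i$, (A2.2) applied to the renormalized coefficients $\l_i/\Lambda$ forces $\Lambda\le 1/(2\eta)$ (since $\zeta$ is unit). For any $x\in C$, each term $\<\nabla\psi_i(a),x-a\>$ is controlled via the second-order Taylor expansion of $\psi_i$ (which is $\mathcal{C}^{1,1}$ by (A2.1)): using $\psi_i(x)\le 0=\psi_i(a)$ and the Lipschitz constant $2M_\psi$ of $\nabla\psi_i$, one obtains $\<\nabla\psi_i(a),x-a\>\le M_\psi\|x-a\|^2$. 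Summing with weights and using the bound on $\Lambda$ produces the proximal normal inequality with modulus $M_\psi/\eta$, i.e.\ prox-regularity with constant $\eta/M_\psi$. Epi-Lipschitzness follows from pointedness of $N_C$: any $0\ne v\in N_C(x)\cap(-N_C(x))$ would be expressible as a positive combination of the $\nabla\psi_i(x)$'s \emph{and} its negative admits the same representation; adding and normalizing contradicts (A2.2). Then $C=\clo(\inte C)$ is a standard consequence of epi-Lipschitzness, and from this the identities $\inte C=\bigcap_i\{\psi_i<0\}$ and $\bdry C=C\cap\bigcup_i\bdry C_i$ are verified pointwise, again using (A2.2) to exclude degenerate boundary points where all $\psi_i$ vanish with linearly dependent gradients.

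For part $(ii)$, quasiconvexity of $\inte C$ will be extracted from the combination of (A2.5) and epi-Lipschitzness proved in $(i)$: a connected open set with Lipschitz boundary is a uniform (John) domain, hence quasiconvex in the sense of \cite{brudnyi}; I would cite this via the standard Lipschitz-domain theory rather than redo the construction. To construct $\Vphi$, I would proceed in two stages. First, because $\inte C$ is quasiconvex and $\nabla\varphi$ is globally Lipschitz on $\inte C$ (by (A3)), $\varphi$ extends continuously to a function of class $\mathcal{C}^{1,1}$ on $\clo(\inte C)=C$; the quasiconvexity is what lets one pass local Lipschitz estimates on $\nabla\varphi$ to global ones on $C$. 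Second, apply a Whitney-type extension (or equivalently a McShane extension to $\nabla\varphi$ component by component, followed by a smooth interpolation and truncation against a globally Lipschitz cutoff) to produce $\Vphi\in\mathcal{C}^{1,1}(\R^n)$ that coincides with $\varphi$ on $C$ and is globally bounded together with $\nabla\Vphi$. Identity \eqref{eq0} then follows by writing $\varphi=\Vphi+I_C$ on $\dom\varphi$ and applying the Clarke sum rule, whose exactness is guaranteed by the $\mathcal{C}^1$ character of $\Vphi$ and gives $\partial\varphi(x)=\nabla\Vphi(x)+N_C(x)$.

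The main obstacle I anticipate is the precise bookkeeping in the prox-regularity estimate: one must check that the bound $\Lambda\le 1/(2\eta)$ really comes out uniformly in $x\in\bdry C$ and that the quadratic-error control from Taylor is robust up to the claimed modulus, all within the tubular neighborhood $C+\tfrac{\rho}{2}\bar B$ where (A2.1) guarantees the Lipschitz constant $2M_\psi$. The secondary technical point is arranging that the Whitney extension in $(ii)$ actually yields \emph{global} (not only local) Lipschitzness of both $\Vphi$ and $\nabla\Vphi$, which is where quasiconvexity of $\inte C$ is indispensable.
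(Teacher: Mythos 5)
Your proposal is correct and covers the same ground as the paper, but with a genuinely different treatment of the prox-regularity and cone-formula step: the paper argues entirely by citation, invoking \cite[Example 10.24(d)]{rockwet} for amenability and \cite[Theorem 9.1]{adly} together with \cite[Corollary 4.15]{prox} for prox-regularity and \eqref{proxformulaprox}, then deduces epi-Lipschitzness from pointedness of $N_C$ exactly as you do, and for part $(ii)$ simply refers to the proof of \cite[Proposition 3.1$(iii)$]{verachadi}. Your hands-on verification of the proximal normal inequality---normalizing $\zeta=\sum_{i\in\I^0_a}\l_i\nabla\psi_i(a)$ via (A2.2) to force $\Lambda:=\sum\l_i<1/(2\eta)$, and controlling $\<\nabla\psi_i(a),x-a\>\le M_\psi\|x-a\|^2$ by a Taylor estimate against $\psi_i(x)\le 0=\psi_i(a)$---is a useful complement because it makes the constant $\eta/M_\psi$ transparent where the citations leave it opaque. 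Two bookkeeping points should be closed. First, the Taylor step needs the segment $[a,x]\subset C+\tfrac{\rho}{2}\bar B$, which is not automatic for $x\in C$ far from $a$ when $C$ is nonconvex; the repair is that for $\|x-a\|\ge 2\eta/M_\psi$ the proximal inequality is immediate from Cauchy--Schwarz, and for $\|x-a\|<2\eta/M_\psi$ the paper's normalization $M_\psi\ge 4\eta/\rho$ keeps the segment inside $C+\tfrac{\rho}{2}\bar B$---this is precisely why that inequality on $M_\psi$ is imposed. Second, to avoid circularity you should run the argument so that the Taylor estimate first establishes $\{\sum_{i\in\I^0_a}\l_i\nabla\psi_i(a):\l_i\ge 0\}\subset N_C^P(a)$; combined with the reverse inclusion $N_C^L(a)\subset\{\sum\l_i\nabla\psi_i(a)\}$ from the constraint-qualification calculus this yields \eqref{proxformulaprox}, after which normalizing $\zeta$ gives the prox-regularity constant. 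With those adjustments the argument is sound.
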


\begin{proof} ($i$): The amenability of $C$ follows from  \cite[Example 10.24 (d)]{rockwet}. For the prox-regularity of $C$ and the formula \eqref{proxformulaprox}, use \cite[Theorem 9.1]{adly} and \cite[Corollary 4.15]{prox}. Now from (A2.2) and \eqref{proxformulaprox},  we conclude that $N_C(x)$ is pointed for all $x\in\bdry C$, and hence $C$ is epi-Lipschitz, and thus,  $C=\clo (\inte C)$. Hence, as  $C\not=\emptyset$ and compact, then,  $\inte C\not=\emptyset$ and $\bdry C\not=\emptyset$. The two formulas of $\inte C$ and $\bdry C$ follow directly from $C$ being defined by \eqref{Cdef} and from \cite[Lemma 3.3]{VCpaper}.  \vspace{0.2cm}\\
($ii$): See the proof of \cite[Proposition 3.1$(iii)$]{verachadi}. \end{proof}

\begin{remark}\label{newD} The extension $\Vphi$ of $\varphi$ obtained in Proposition \ref{prop1}$(ii)$, satisfies for some $K>0$,
 $$|\Vphi(x)|\leq K,\;\;\|\nabla\Vphi(x)\|\leq K,\;\,\hbox{and}\;\,\|\nabla\Vphi(x)-\nabla\Vphi(y)\|\leq K\|x-y\|,\;\; \forall x,\,y\in\R^n.$$ 
Equation (\ref{eq0}) implies that the sweeping process $(D)$ can now be rephrased in terms of the normal cone to $C$ and the extension $\Vphi$ of $\varphi$ as the following 
$$(D) \begin{sqcases}\dot{x}(t)\in f(t,x(t),u(t))-\nabla\Vphi(x(t)) -N_C(x(t)),\;\;\hbox{a.e.}\;t\in[0,T],\\x(0)\in C_0\subset C.	 \end{sqcases}\\$$
\end{remark}
The reformulation of $(D)$ in Remark \ref{newD} motivates defining the function $f_{\Vphi}$ from $[0,T]\times\R^n\times\R^m$ to $\R^n$ by 
\begin{equation*} \label{eqVera4} f_{\Vphi}(t,x,u):=f(t,x,u)-\nabla\Vphi(x),\;\;\;\forall (t,x,u)\in [0,T]\times\R^n\times \R^m.\vspace{0.1cm}\end{equation*}
Clearly (A1)$_{\G}$, (A2.1)-(A2.2), (A2.5), and (A3) imply that the following holds true for $f_{\Vphi}$:
\begin{enumerate}[leftmargin=1.24cm, label=\textbf{(A\arabic*)$_\Vphi$}:]
\item For fixed $(x,u)\in C\times  \mathbb{U } $, $ f_\Vphi(\cdot, x, u)$ is Lebesgue-measurable; and for a.e $t \in [0,T]$ we have: $(x, u)\mapsto  f_\Vphi(t, x, u)$ is continuous on $C\times U(t)$;  for all $u\in U(t)$, $x\mapsto f_\Vphi(t,x,u)$ is $\bar{M}$-Lipschitz on $C$; and $\|f_\Vphi(t,x,u)\| \leq \bar{M}$  for all $(x,u)\in C \times U(t),$ where $\bar{M}:=M+K.$ 
\end{enumerate}

The following notations will be used throughout the paper:
\begin{itemize}
\item We denote by $(\gk)_{k}$ a sequence satisfying $\gk>\frac{2\bar{M}}{\eta}$ for all $k\in\N$, and $\gk \f\infty\;\hbox{as}\;k \f\infty$. We define the two real sequences $(\a_k)_k$ and $(\sigma_k)_k$ by \begin{equation}\label{sigmadef} \a_{k}:=\frac{\ln \left(\frac{\eta\gk}{2\bar{M}}\right)}{\gk}\;\,\hbox{and}\;\,\sigma_k:=\frac{r\bar{M}_{\psi}}{2\eta^2}\bigg(\frac{\ln(r)}{\gk}+\a_k\bigg),\;\;\;k\in\N.\end{equation}
For $(\a_{k})_k$, we have  $\gk e^{-\a_{k}\gk}=\frac{2\bar{M}}{\eta},$ $\a_k>0$ for all $k\in\N$, $\a_k\searrow$ and $\a_{k}\f 0$. For $(\sigma_k)_k$, we have  $\sigma_k>0$ for all $k\in\N$, $\sigma_k\searrow$ and $\s_{k}\f 0.$
\item  The function $\psi\colon\R^n\f\R$ is defined by
 \begin{equation}\label{psidef} \psi(x):=\max\{\psi_i(x) : i=1,\dots,r\},\;\;\forall x\in\R^n. \end{equation}
Clearly we have that $C=\{x\in\R^n : \psi(x)\leq 0\}.$
\item  For $k\in\N$, we define the function $\psi_{\gk}\colon\R^n\f\R$ by \begin{equation}\label{psigkdef} \psi_{\gk}(x):=\frac{1}{\gk}\ln\Bigg(\sum_{i=1}^{r}e^{\gk \psi_i(x)}\Bigg),\;\;\forall x\in\R^n. \end{equation}
We also define, for each $k\in\N$, \begin{equation}\label{Cgkdef} C^{\gk}:=\{x\in\R^n : \psi_{\gk}(x)\leq 0\}=\bigg\{x\in\R^n : \sum_{i=1}^{r}e^{\gk \psi_i(x)}\leq 1\bigg\},\;\hbox{and}\end{equation}
 \begin{equation}\hspace{-0.4cm}\label{Cgkkdef} C^{\gk}(k):=\{x\in\R^n : \psi_{\gk}(x)\leq -\a_k\}=\bigg\{x\in\R^n : \sum_{i=1}^{r}e^{\gk \psi_i(x)}\leq \frac{2\bar{M}}{\eta\gk}\bigg\}.\end{equation}
 One can easily see that if $r=1$, then $\psi_{\gk}$ and $C^{\gk}$ coincide with $\psi$ and $C$, respectively.
\item For given $x(\cdot)\in W^{1,1}( [0,T]; C)$, $a\ge0$, and $y\in C$, we define the sets
\begin{equation*}\label{Ibtau}
I^{\-}(x):=\bigcap\limits_{i=1}^r  I_i^{\-}(x)= \{t\in [0,T]:  x(t)\in \inte C\},\;\,\hbox{and hence}\;\,I^{0}(x)=[0,T]\setminus I^{\-}(x),
\end{equation*} 
where  $I_i^{\-}(x)$ is the set defined in \eqref{Ibtaui}.
\begin{equation}\label{Iax}\hspace{-0.15cm} \I^a_{y}:=\{i\in\{1,\dots, r\}: -a\le\psi_i(y)\le0\}\;\,\hbox{and}\;\,I^a(x):=\{t\in [0,T]: \I^a_{x(t)}\neq\emptyset\}.
\end{equation}
Note that when  (A2.1) holds, $I^a(x)$ is compact by Lemma \ref{lemmaaux1}.
\item The approximation dynamic $(D_{\gk})$ is defined by  
\begin{equation}\hspace{-0.57cm}\label{Dgk1} ({D}_{\gk}) \begin{sqcases} \dot{x}(t)=f_\Vphi(t,x(t),u(t))-\sum\limits_{i=1}^{r} \gk e^{\gk\psi_i(x(t))} \nabla\psi_i(x(t))\;\;\textnormal{a.e.}\; t\in[0,T],\\ x(0)\in C^{\gk}. \end{sqcases}
 \end{equation}
One can easily verify that the system  $(D_{\gk})$  can be rewritten using the function $\psi_{\gk}$ as the following
 \begin{equation} \label{Dgk2} ({D}_{\gk}) \begin{sqcases} \dot{x}(t)=f_\Vphi(t,x(t),u(t))-\gk e^{\gk\psi_{\gk}(x(t))} \nabla\psi_{\gk}(x(t))\;\;\textnormal{a.e.}\; t\in[0,T],\\ x(0)\in C^{\gk}, \end{sqcases} \end{equation}
where, by \eqref{psigkdef}, 
\begin{equation}\label{gradpsigk}
\nabla\psi_{\gk}(x)= \frac{\sum\limits_{i=1}^r e^{\gk\psi_i(x)} \nabla\psi_i(x)}{\sum\limits_{i=1}^re^{\gk\psi_i(x)}}.
\end{equation}
\end{itemize}
\begin{remark}\label{psiinfo} $(i)$ Under assumption (A2.1), \cite[Proposition 2.3.12]{clarkeold} implies that the function $\psi$ defined in \eqref{psidef}  is locally Lipschitz on $C+\rho B$, and for all $x\in (C+\rho B)$ we have \begin{equation} \label{subpsi} \partial \psi(x)=\Bigg\{\sum_{i\in\mathcal{J}_x}\lambda_i\nabla\psi_i(x) : \l_i\geq 0\;\,\hbox{and}\;\sum_{i\in\mathcal{J}_x}\l_i=1 \Bigg\}, \end{equation}
where $\mathcal{J}_x:=\{i\in\{1,\dots,r\} : \psi_i(x)=\psi(x)\}$, which is  equal to $\I^0_x$ when  $\psi(x)=0$. Hence,  for  $\bar{M}_\psi$ the constant defined before Proposition \ref{prop1},  we have
\begin{equation*} \left\| \zeta\right\|\le \bar{M}_{\psi}, \;\;  \forall c\in C\;\,\hbox{and}\;\,\forall\zeta\in\partial \psi(c). \end{equation*}
$(ii)$ Assumption (A2.2) can be rephrased  by means of $(i)$  in terms of $\psi$ as follows: \\ There is a constant $\eta>0$ such that $$ \left\| \zeta\right\|>2\eta, \;\;  \forall c\in \{x\in \R^n: \psi(x)=0\}\;\,\hbox{and}\;\,\forall\zeta\in\partial \psi(c).$$ 
\end{remark}

\subsection{Properties of $(\psi_{\gk})_k$ and $(C^{\gk}(k))_k$ }\label{propertiesofpsigk} The goal of this subsection is to provide important properties of the functions $\psi_{\gk}$ and the sets $C^{\gk}(k)$ that play an essential role in the construction of the approximating problems $(\tilde{P}_{\gk}^{\ab})$. The proofs of these properties are postponed to Section \ref{auxresults}.

For the sequence of functions $(\psi_{\gk})_k$, we have the following.

\begin{proposition}[Properties of $(\psi_{\gk})_k$] \label{proppsigk} 
 Under \textnormal{(A2.1)-(A2.2)}, the following assertions hold\sp$:$
\begin{enumerate}[$(i)$]
\item The sequence $(\psi_{\gk})_k\in \CO^{1,1}$ on $C+\rho B$, is monotonically nonincreasing in terms of $k$, and converges uniformly to $\psi$. Moreover, for all $k\in\N$, we have 
\begin{equation} \label{pisgkineq} \psi(x)\leq \psi_{\gk}(x)\leq \psi(x)+\frac{\ln(r)}{\gk},\;\;\forall x\in\R^n,\;and
\end{equation}
\begin{equation}\label{gradbdd}
\|\nabla\psi_{\gk}(x)\|\le \bar{M}_\psi, \;\; \forall x\in C.
\end{equation}
\item  There exist $k_1\in\N$ and $r_1>0$ satisfying $2 r_1\leq \rho$, such that for all $k\geq k_1$, for all $x\in \{x\in \R^n: \psi_{\gk}(x)=0\}$, and for all $z\in B_{2r_1}(x)$, we have 
 \begin{equation*}\label{psiuniform}
  \|\nabla\psi_{\gk}(z)\|> 2\eta.
\end{equation*} 
In particular,  for $k\geq k_1$ we have \begin{equation}\label{A2.2forpsigk} [\psi_{\gk}(x)=0]\im \|\nabla\psi_{\gk}(x)\|>2\eta.\end{equation}
\item There exists $k_2\geq k_1$ and $\e_{o}>0$ such that for all $k\geq k_2 $ we have  \begin{equation*}\label{epsilonpsigk} \big[x\in C^{\gk}\;\hbox{and}\; \|\nabla\psi_{\gk}(x)\|\leq \eta\big]\implies \psi_{\gk}(x)<-\e_o.\end{equation*}
\end{enumerate}
\end{proposition}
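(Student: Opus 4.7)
The plan is to establish the three parts in sequence, leveraging standard properties of the log-sum-exp operator together with the positive-linear-independence built into (A2.2).

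For (i), the $\CO^{1,1}$ regularity of $\psi_{\gk}$ on $C+\rho B$ follows by the chain rule: each composition $e^{\gk\psi_i(\cdot)}$ inherits the $\CO^{1,1}$ regularity from $\psi_i$ by (A2.1), the finite sum $\sum_i e^{\gk\psi_i(\cdot)}$ is strictly positive (bounded below by a positive constant on compacta), and applying $\gk^{-1}\ln$ therefore preserves $\CO^{1,1}$. The sandwich \eqref{pisgkineq} is immediate from $e^{\gk\psi(x)}\le\sum_i e^{\gk\psi_i(x)}\le r\sp e^{\gk\psi(x)}$ upon taking $\gk^{-1}\ln$, and uniform convergence $\psi_{\gk}\to\psi$ is then a direct consequence. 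Monotonicity in $k$ is the classical fact that $\gamma\mapsto\gamma^{-1}\ln\sum_i e^{\gamma a_i}$ is nonincreasing, which one verifies directly from $f'(\gamma)=-\gamma^{-2}H(p^\gamma)\le 0$, where $p_i^\gamma=e^{\gamma a_i}/\sum_j e^{\gamma a_j}$ and $H$ denotes the Shannon entropy. Finally, formula \eqref{gradpsigk} exhibits $\nabla\psi_{\gk}(x)$ as a convex combination of the vectors $\nabla\psi_i(x)$, so $\|\nabla\psi_{\gk}(x)\|\le\max_i\|\nabla\psi_i(x)\|\le\bar{M}_\psi$ on $C$, proving \eqref{gradbdd}.

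For (ii), I would argue by contradiction. Suppose no admissible pair $(k_1,r_1)$ exists; then one can extract sequences $k_n\to\infty$, $r_n\searrow 0$, points $x_n$ with $\psi_{\gkn}(x_n)=0$, and $z_n\in B_{2r_n}(x_n)$ satisfying $\|\nabla\psi_{\gkn}(z_n)\|\le 2\eta$. The sandwich \eqref{pisgkineq} forces $-\tfrac{\ln r}{\gkn}\le\psi(x_n)\le 0$, so $x_n\in C$ and $\psi(x_n)\to 0$; by compactness of $C$, a subsequence satisfies $x_n\to x^*\in\bdry C$, and since $\|z_n-x_n\|\to 0$, also $z_n\to x^*$. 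Writing $\nabla\psi_{\gkn}(z_n)=\sum_i\lambda_i^n\nabla\psi_i(z_n)$ with $\lambda_i^n=e^{\gkn\psi_i(z_n)}/\sum_j e^{\gkn\psi_j(z_n)}$, the key step is concentration: for $i\notin\I^0_{x^*}$ one has $\psi_i(z_n)\to\psi_i(x^*)<0$ while for any $j\in\I^0_{x^*}$ one has $\psi_j(z_n)\to 0$, so the ratio $e^{\gkn(\psi_i(z_n)-\psi_j(z_n))}$ decays exponentially and $\lambda_i^n\to 0$. Along a further subsequence, $\lambda_i^n\to\lambda_i^*\ge 0$ for $i\in\I^0_{x^*}$ with $\sum_{i\in\I^0_{x^*}}\lambda_i^*=1$, so $\nabla\psi_{\gkn}(z_n)\to\sum_{i\in\I^0_{x^*}}\lambda_i^*\nabla\psi_i(x^*)$, a vector whose norm exceeds $2\eta$ by (A2.2), contradicting the assumed bound. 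Assertion \eqref{A2.2forpsigk} is then the immediate specialization $z=x$.

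For (iii), the same contradiction template applies: if the implication failed for every $(k_2,\e_o)$, we could extract $k_n\to\infty$ and $x_n\in C^{\gkn}$ with $\|\nabla\psi_{\gkn}(x_n)\|\le\eta$ and $\psi_{\gkn}(x_n)\to 0$. Then \eqref{pisgkineq} forces $\psi(x_n)\to 0$, hence $x_n\to x^*\in\bdry C$ along a subsequence, and the concentration argument above produces a limit of the form $\sum_{i\in\I^0_{x^*}}\lambda_i^*\nabla\psi_i(x^*)$ of norm $>2\eta$, contradicting $\|\nabla\psi_{\gkn}(x_n)\|\le\eta<2\eta$. The principal obstacle in the whole proposition is precisely the concentration step in (ii): one must carefully verify that the soft-max weights $\lambda_i^n$ vanish exponentially off $\I^0_{x^*}$ while continuity of each $\nabla\psi_i$ transports the \emph{strict} lower bound from (A2.2) through the limit; once this transfer is secured, the same machinery handles (iii) and completes the argument.
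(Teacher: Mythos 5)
Your proof is correct and follows essentially the same approach as the paper's: establish \eqref{pisgkineq} directly from the log-sum-exp sandwich, and prove (ii) and (iii) by contradiction using the compactness of $C$ (assumed throughout Section~4 of the paper), the concentration of the soft-max weights $\lambda_i^n$ onto the active index set $\I^0_{z_o}$, and (A2.2). The paper is terser than you are in two places: it defers the monotonicity and uniform-convergence claims in (i) to a reference (Li--Fang, cited as \cite{Fang}) rather than giving the entropy-derivative argument, and in the concentration step it simply invokes ``$e^{\gamma_{k_n}\psi_i(z_{\gamma_{k_n}})}\to 0$ whenever $\psi_i(z_o)<0$'' without explicitly bounding $\lambda_i^n$ by the ratio $e^{\gamma_{k_n}(\psi_i(z_n)-\psi_j(z_n))}$ as you do; your version makes that passage airtight.
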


\begin{remark} \label{cgkproper} From \eqref{pisgkineq} and the definition of $C^{\gk}$ in \eqref{Cgkdef}, we conclude that $C^{\gk}\subset C$ for all $k$. On the other hand, it is easy to prove that under the assumptions \textnormal{(A2.1)-(A2.2)}, and using \eqref{Cgkdef}, there exists $k_3\geq k_2$ such that for all $k\geq k_3$, $C^{\gk}$ is nonempty and compact, with $C^{\gk}\subset\inte C$ for $r>1$. Hence, since by Proposition \ref{proppsigk},  $\psi_{\gk}$ is $C^{1,1}$ on $C^{\gk}+\rho B$ and satisfies \eqref{A2.2forpsigk}, we deduce that all the properties satisfied by the set $C$ in \cite[Lemma 3.3 \& 3.4]{VCpaper} are also satisfied by $C^{\gk}$ for all $k\geq k_3$.
\end{remark}

We proceed to present the properties of the sequence of sets $(C^{\gk}(k))_k$. We denote by $2M_{\psi_{\gk}}$ the Lipschitz constant of $\nabla \psi_{\gk}$ over the compact set $C+\frac{\rho}{2}\bar{B}$ satisfying $M_{\psi_{\gk}}\geq \frac{4\eta}{\rho}$.

\begin{proposition}[Properties of $(C^{\gk}(k))_k$] \label{propcgk(k)} 
 Under \textnormal{(A2.1)-(A2.2)}, the following assertions hold\sp$:$
\begin{enumerate}[$(i)$]
\item  For all $k$, the set $C^{\gk}(k)\subset \inte C^{\gk}\subset \inte C$ and is compact. Moreover, there exists $k_4\geq k_3$ such that for $k\geq k_4$, we have$\sp:$
\begin{enumerate}[$(a)$]
\item $\bdry C^{\gk}(k)=\{x\in\R^n : \psi_{\gk}(x)=-\a_k\}=\Big\{x\in\R^n : \sum\limits_{i=1}^r e^{\gk\psi_i(x)}= \frac{2\bar{M}}{\eta\gk}\Big\}\not=\emptyset.$
\item $\inte C^{\gk}(k)=\{x\in\R^n : \psi_{\gk}(x)<-\a_k\}\not=\emptyset.$
\item $C^{\gk}(k)$ is amenable, epi-Lipschitz with $C^{\gk}(k)=\clo(\inte C^{\gk}(k))$, and $\frac{\eta}{2M_{\psi_{\gk}}}$-prox-regular.
\item For all $x\in\bdry C^{\gk}(k)$ we have $$N_{C^{\gk}(k)}(x)\bbp= \bbp N^P_{C^{\gk}(k)}(x)=N^L_{C^{\gk}(k)}(x)\bp=\bp\bigg\{\l\sum\limits_{i=1}^r e^{\gk\psi_i(x)}\nabla\psi_i(x) : \l\geq 0\bigg\}.$$
\end{enumerate}
\item  The sequence $(C^{\gk}(k))_k$ is a nondecreasing sequence whose  Painlev\'e-Kuratowski limit is $C$ and satisfies
\begin{equation}\label{union.Cgk(k)}\inte C=\bigcup_{k\in \N} \inte C^{\gk}(k)=\bigcup_{k\in \N} C^{\gk}(k). \end{equation}
\item  For $c\in \bdry C$, there exist $\bar{k}_c\geq k_4$, ${r}_{\c}>0$, and a vector $d_c\not=0$ such that  \begin{equation*}\label{lastideahope}\left(\left[C\cap \bar{B}_{{r}_{\c}}({c})\right]+\sigma_k\frac{d_c}{\|d_c\|}\right)\subset \inte C^{\gk}(k),\;\;\forall k\geq \bar{k}_c.\end{equation*}
In particular,  for $k\geq \bar{k}_c$ we have \begin{equation}\label{boundarypoint(k)}
 \left({c}+\sigma_k\frac{d_c}{\|d_c\|}\right)\in \inte C^{\gk}(k). \end{equation}
\end{enumerate}
\end{proposition}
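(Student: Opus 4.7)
The plan is first to note that $C^{\gk}(k)$ is a closed sublevel set of the continuous function $\psi_{\gk}$, contained in $\inte C^{\gk}\subset \inte C$ (since $\alpha_k>0$ and by Remark \ref{cgkproper}), hence compact. The second equality in (a) follows directly from the identity $\psi_{\gk}(x)=-\alpha_k \Longleftrightarrow \sum_{i=1}^{r}e^{\gk\psi_i(x)}=e^{-\alpha_k\gk}=\frac{2\bar{M}}{\eta\gk}$. For $k$ large enough that $\alpha_k<\e_o$, every $x$ with $\psi_{\gk}(x)=-\alpha_k$ satisfies $\psi_{\gk}(x)>-\e_o$, so the contrapositive of Proposition \ref{proppsigk}(iii) yields $\|\nabla\psi_{\gk}(x)\|>\eta>0$. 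Hence $\psi_{\gk}+\alpha_k$ is a $\CO^{1,1}$ function with nonvanishing gradient on its zero level set, and items (b), (c), (d) are obtained by transposing verbatim to $\psi_{\gk}+\alpha_k$ the arguments given for $\psi_{\gk}$ in \cite[Lemmas 3.3, 3.4]{VCpaper} (cf.\ Remark \ref{cgkproper}). The specific form of $N_{C^{\gk}(k)}(x)$ in (d) comes from the standard formula $\{\lambda\nabla\psi_{\gk}(x):\lambda\ge 0\}$ by substituting \eqref{gradpsigk} and observing that the denominator $\sum_i e^{\gk\psi_i(x)}$ equals the positive constant $\frac{2\bar{M}}{\eta\gk}$ on $\bdry C^{\gk}(k)$, so it can be absorbed into the nonnegative multiplier.

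\textbf{Part (ii) — monotonicity and limit.} The inclusion $C^{\gk}(k)\subset C^{\g_{k+1}}(k+1)$ follows immediately from $\psi_{\g_{k+1}}\le \psi_{\gk}$ (Proposition \ref{proppsigk}(i)) combined with $-\alpha_k\le -\alpha_{k+1}$. The inclusion $\bigcup_k C^{\gk}(k)\subset \inte C$ is given by (i). Conversely, any $x\in \inte C$ satisfies $\psi(x)\le -\delta$ for some $\delta>0$, and then \eqref{pisgkineq} yields $\psi_{\gk}(x)\le -\delta+\ln(r)/\gk<-\alpha_k$ for all large $k$, so $x\in \inte C^{\gk}(k)$; this proves \eqref{union.Cgk(k)}. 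For the Painlev\'e--Kuratowski convergence, the outer limit is contained in $C$ by passing to the limit in $\psi_{\gk_n}(x_{k_n})\le -\alpha_{k_n}$, using the uniform convergence $\psi_{\gk}\to\psi$ (Proposition \ref{proppsigk}(i)) and $\alpha_k\to 0$. The inner limit covers $C$ by constant sequences for interior points (via \eqref{union.Cgk(k)}) and, for boundary points, by the approximating sequence $c+\sigma_k d_c/\|d_c\|\to c$ produced in (iii).

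\textbf{Part (iii) — the main step.} Given $c\in \bdry C$, the crux is to exhibit one direction $d_c$ whose slope against \emph{every} active gradient $\nabla\psi_i(c)$, $i\in \I^0_c$, is uniformly strong enough to beat $\alpha_k$ after passage through the log--sum--exp functional defining $\psi_{\gk}$. My proposal is to take $d_c:=-v^*$ where $v^*$ is the point of minimum norm in $\conv\{\nabla\psi_i(c):i\in\I^0_c\}$. By (A2.2), $\|v^*\|>2\eta$, and the variational characterization of projection onto a convex set gives $\langle \nabla\psi_i(c),v^*\rangle\ge\|v^*\|^2$ for every $i\in\I^0_c$; consequently, with $v_c:=d_c/\|d_c\|$,
\[
\langle \nabla\psi_i(c),v_c\rangle \le -\|v^*\|<-2\eta,\quad \forall\, i\in\I^0_c.
\]
Using the continuity of $\nabla\psi_i$ and $\psi_i$, choose $r_c>0$ so small that, for every $x\in C\cap \bar{B}_{r_c}(c)$, one still has $\langle\nabla\psi_i(x),v_c\rangle\le -2\eta$ for $i\in\I^0_c$, and $\psi_i(x)\le -\delta_c/2$ for some $\delta_c>0$ when $i\notin\I^0_c$. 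A first-order Taylor expansion, together with the $2M_\psi$-Lipschitz bound on the $\nabla\psi_i$, then yields for all $x\in C\cap\bar{B}_{r_c}(c)$ and $\sigma_k$ small:
\[
\psi_i(x+\sigma_k v_c)\le -2\eta\sigma_k+M_\psi\sigma_k^2\quad (i\in\I^0_c),\qquad \psi_i(x+\sigma_k v_c)\le -\delta_c/4\quad (i\notin\I^0_c).
\]
Summing the exponentials and invoking the explicit form $\sigma_k=\frac{r\bar{M}_\psi}{2\eta^2}\bigl(\ln(r)/\gk+\alpha_k\bigr)$ with $\bar{M}_\psi\ge 2\eta$, one obtains $2\eta\sigma_k\gk=\frac{r\bar{M}_\psi}{\eta}(\ln r+\alpha_k\gk)\ge 2(\ln r+\alpha_k\gk)$; combined with $\sigma_k^2\gk\to 0$ and with the fact that the non-active terms decay like $e^{-\gk\delta_c/4}$, this gives $\sum_{i}e^{\gk\psi_i(x+\sigma_k v_c)}<e^{-\alpha_k\gk}$ for every $k\ge\bar{k}_c$ sufficiently large and every $x\in C\cap\bar{B}_{r_c}(c)$. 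That is exactly $x+\sigma_k v_c\in \inte C^{\gk}(k)$, and taking $x=c$ gives \eqref{boundarypoint(k)}.

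\textbf{Main obstacle.} The difficulty is not in the soft structural steps (i)--(ii) but in the quantitative matching of part (iii): one needs a direction producing a \emph{uniform} negative slope against all active $\nabla\psi_i(c)$ whose magnitude is tightly correlated to the coefficient $\frac{r\bar{M}_\psi}{2\eta^2}$ hidden in $\sigma_k$. The minimum-norm element of the convex hull of the active gradients is the natural candidate because (A2.2) gives it norm strictly greater than $2\eta$, which, via the projection inequality, is exactly the universal slope threshold that $\sigma_k$ was designed to accommodate. Without this specific choice of $d_c$, the log--sum--exp calculation would not close.
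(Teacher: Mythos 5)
Your proposal is correct, and parts (i)--(ii) run essentially along the paper's own lines: you get the nonvanishing gradient of $\psi_{\gk}$ on the level set $\{\psi_{\gk}=-\a_k\}$ (your contrapositive of Proposition \ref{proppsigk}(iii) is the same fact as Remark \ref{veraimp}), then defer (a)--(d) to the corresponding results of \cite{VCpaper}; the monotonicity and the identity \eqref{union.Cgk(k)} are argued exactly as in the paper, and for the Painlev\'e--Kuratowski limit you use inner/outer limits (the inner limit at boundary points via (iii)) where the paper simply notes that for a nondecreasing sequence the limit is $\clo\big(\bigcup_k C^{\gk}(k)\big)=\clo(\inte C)=C$. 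The genuine difference is in (iii). The paper builds $d_c$ in Lemma \ref{dcexistence} as $\sum_{j\in\I^0_c}v_j(c)$, with $v_j(c)$ the tangential part of $-\nabla\psi_j(c)$ in the Moreau decomposition relative to $N_C(c)$ and $T_C(c)$; this gives the slope $\<\tfrac{d_c}{\|d_c\|},\nabla\psi_i(c)\>\le-\tfrac{4\eta^2}{r\bar{M}_\psi}$, which after localization to $-\tfrac{2\eta^2}{r\bar{M}_\psi}$ on $\bar{B}_{2r_{\c}}(c)$ is calibrated exactly to $\sigma_k=\tfrac{r\bar{M}_\psi}{2\eta^2}\big(\tfrac{\ln r}{\gk}+\a_k\big)$, so a mean value theorem estimate yields $\psi_i(x+\sigma_k\tfrac{d_c}{\|d_c\|})<-\tfrac{\ln r}{\gk}-\a_k$ for every index and \eqref{pisgkineq} closes the argument with no asymptotics in $k$ beyond $\sigma_k\le r_{\c}$. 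You instead take $d_c:=-v^\ast$, the negative of the minimum-norm element of $\conv\{\nabla\psi_i(c):i\in\I^0_c\}$; (A2.2) gives $\|v^\ast\|>2\eta$ and the projection inequality gives the stronger, $r$-independent slope $\le-2\eta$, which indeed dominates the threshold $\tfrac{2\eta^2}{r\bar{M}_\psi}\le\tfrac{\eta}{r}$ built into $\sigma_k$, so your exponential-sum estimate does close with a $\bar{k}_c$ uniform in $x\in C\cap\bar{B}_{r_c}(c)$. What your route buys: it bypasses Lemma \ref{dcexistence} and the normal/tangent-cone machinery behind it, and since the statement is purely existential in $d_c$ (and the rest of the paper only uses the translated-inclusion property proved here), your choice is admissible. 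What it costs: the second-order Taylor bound introduces the remainder $M_\psi\sigma_k^2$ and forces the active/inactive splitting of $\sum_i e^{\gk\psi_i}$ with ``$k$ large'' bookkeeping; you could eliminate this entirely by keeping your slope bound on the larger ball $\bar{B}_{2r_c}(c)$ and using the mean value theorem, as the paper does, to get $\psi_i(x+\sigma_k v_c)\le-2\eta\sigma_k<-\tfrac{\ln r}{\gk}-\a_k$ directly. Two small points you should state explicitly: the degenerate case $\I^0_c=\{1,\dots,r\}$ (no inactive indices, so no $\delta_c$ is needed), and the fact that the inactive-index bound $\psi_i(x+\sigma_k v_c)\le-\delta_c/4$ uses the gradient bound near $C$ together with $\sigma_k\f0$.
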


\begin{remark} \label{intelemma(k)} From Proposition \ref{propcgk(k)}$(iii)$, we deduce that for any $c\in C$, there exists a sequence $({c}_{\gk})_k$ such that, for $k$ large enough, ${c}_{\gk}\in \inte C^{\gk}(k)\subset \inte C^{\gk}$, and ${c}_{\gk}\f c$. Indeed: \begin{enumerate}[$(i)$]
\item For $c\in \bdry C$,  we choose $c_{\gk} := c+\sigma_k\frac{d_c}{\|d_c\|}$ for all $k$.  For $k\geq \bar{k}_c$, we have from \eqref{boundarypoint(k)} that $c_{\gk}\in \inte C^{\gk}(k)$. Moreover, since $\sigma_k\f 0$ and $\frac{d_c}{\|d_c\|}$ is a unit vector, we have $c_{\gk}\f c$.
\item For $c\in \inte C$, equation \eqref{union.Cgk(k)} yields the existence of $\hat{k}_c \in \N$,  such that $c\in \inte C^{\gk}(k)$ for all $k\ge \hat{k}_c$.
 Hence, there exists  $ \hat{r}_{c}>0$  satisfying  \begin{equation*} \label{approxxbar}c\in \bar{B}_{\hat{r}_{c}}({c})\subset  \inte C^{\gk}(k),\;\;\forall\sp k\ge \hat{k}_c.\end{equation*}
In this case,  we  take   the sequence ${c}_{\gk}\equiv c\in\inte C^{\gk}(k)$ that converges to $c$. 
\end{enumerate}  \end{remark}

\begin{remark} \label{veraimp} When combining Proposition \ref{proppsigk}$(ii)$ and Proposition \ref{propcgk(k)}$(ii)$, it follows that for $k$ large enough, we have 
\begin{equation*}\label{A2.2forpsigkbis} [\psi_{\gk}(x)=-\a_k]\im \|\nabla\psi_{\gk}(x)\|>\eta.\end{equation*}

\end{remark} 

\subsection{Connection between $(D)$ and $(D_{\gk})$ $-$ Existence of solutions} \label{connection}  We begin this subsection with the following lemma that states that  the unique solution  $x_{\gk}$  of the Cauchy problem corresponding to the dynamic $(D_{\gk})$ with initial condition in $C^{\gk}$ remains in the set $C^{\gk}$ for all $t$, and  the sequence $(x_{\gk})_{k}$ is equicontinuous. Note that since the end-time $T$ was taken to be $1$ in \cite{VCpaper},  when necessary, we shall adjust here  the constants' expressions in the results and proofs extracted from \cite{VCpaper}.

We introduce the  following {\it global} version of (A1): 
\begin{enumerate}[leftmargin=1.24cm, label=\textbf{({A}\arabic*)$_{\G}$}:]
\item For fixed $(x,u)\in C\times \mathbb{U} $, $f(\cdot, x,u)$ is Lebesgue-measurable; and there exists $ M>0$ such that for a.e. $t \in [0,T]$ we have: $(x, u)\mapsto f (t, x, u)$ is continuous on $C\times U(t)$; for all $u\in U(t)$, $x\mapsto f (t, x, u)$ is $M$-Lipschitz on $C$; and  $\|f(t,x,u)\| \leq M $  for all $(x,u)\in  C \times U(t).$
\end{enumerate}

\begin{lemma}[Invariance of $C^{\gk}$ and uniform bounds]\label{invariance} Let \textnormal{(A1)}$_{\G}$, \textnormal{(A2.1)-(A2.2)} and \textnormal{(A2.5)} be satisfied. Then for $k\geq k_3$, the system $(D_{\gamma_k})$ with $x(0)=c_{\gk}\in C^{\gk}$ and $u_{\gk}\in \mathscr{U}$, has a unique solution $x_{\gk}$ which belongs to $W^{1,2}([0,T];\R^n)$ and satisfies  $x_{\gk}(t)\in C^{\gk}$ for all $t\in [0,T]$. Furthermore, the sequence $(x_{\gamma_k})_k$ satisfies
\begin{equation*} \label{eq2} \|x_{\gk}\|_{\infty} \leq \|c_{\gk}\|+\sqrt{\left(\bar{M}T\right)^2+2T}\;\;\;\hbox{and}\;\;\;\int_0^T\|\dot{x}_{\gk}(t)\|^2\sp dt\leq \bar{M}^2T+2,\end{equation*}
and thus,  it is equicontinuous. \end{lemma}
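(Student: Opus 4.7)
My plan is in three steps: local existence and uniqueness of $x_{\gk}$, invariance of $C^{\gk}$ (the main obstacle), and a telescoping energy estimate for the uniform bounds. For local existence, Proposition~\ref{proppsigk}$(i)$ gives $\psi_{\gk}\in\CO^{1,1}$ on $C+\rho B$, so $x\mapsto \gk e^{\gk\psi_{\gk}(x)}\nabla\psi_{\gk}(x)$ is locally Lipschitz there, and by \textnormal{(A1)}$_{\G}$, $(t,x)\mapsto f_{\Vphi}(t,x,u_{\gk}(t))$ is Carath\'eodory. Standard Carath\'eodory theory then yields a unique absolutely continuous local solution starting at $c_{\gk}\in C^{\gk}\subset C$; its extension to $[0,T]$ will follow from the invariance argument below.

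\textbf{Invariance of $C^{\gk}$ (main obstacle).} Arguing by contradiction, suppose $\psi_{\gk}(x_{\gk}(t^*))>0$ for some $t^*>0$ in the maximal interval of existence. Let $t_0<t^*$ be the latest time at which $\psi_{\gk}(x_{\gk}(t_0))=0$, so that $\psi_{\gk}(x_{\gk}(t))>0$ on $(t_0,t_0+\delta]$ for some $\delta>0$. By Proposition~\ref{proppsigk}$(ii)$, $\|\nabla\psi_{\gk}(z)\|>2\eta$ for every $z\in B_{2r_1}(x_{\gk}(t_0))$, so by continuity of $x_{\gk}$ and by further shrinking $\delta$, I may assume this bound also holds along the trajectory on $(t_0,t_0+\delta]$. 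Since $e^{\gk\psi_{\gk}(x_{\gk}(t))}\geq 1$ there, differentiation along $(D_{\gk})$ gives, for a.e.\ $t$ in this interval,
\begin{equation*}
\frac{d}{dt}\psi_{\gk}(x_{\gk}(t)) = \langle \nabla\psi_{\gk},f_{\Vphi}\rangle - \gk e^{\gk\psi_{\gk}(x_{\gk}(t))}\|\nabla\psi_{\gk}\|^2 \leq \|\nabla\psi_{\gk}\|(\bar{M}-\gk\|\nabla\psi_{\gk}\|) < 0,
\end{equation*}
the last inequality following from $\gk\|\nabla\psi_{\gk}\|>2\eta\gk>4\bar{M}>\bar{M}$, which is guaranteed by the hypothesis $\gk>2\bar{M}/\eta$. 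Integrating contradicts $\psi_{\gk}(x_{\gk}(t))>0$, so $x_{\gk}(t)\in C^{\gk}$ throughout its existence interval. Since $C^{\gk}$ is compact by Remark~\ref{cgkproper}, the solution cannot blow up and extends uniquely to all of $[0,T]$.

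\textbf{Uniform bounds and equicontinuity.} I exploit the telescoping identity $\gk e^{\gk\psi_{\gk}(x_{\gk}(t))}\frac{d}{dt}\psi_{\gk}(x_{\gk}(t))=\frac{d}{dt}e^{\gk\psi_{\gk}(x_{\gk}(t))}$. Taking the inner product of $\dot{x}_{\gk}$ with the right-hand side of $(D_{\gk})$ yields
\begin{equation*}
\|\dot{x}_{\gk}(t)\|^2 \;=\; \langle \dot{x}_{\gk}(t),f_{\Vphi}(t,x_{\gk}(t),u_{\gk}(t))\rangle \;-\; \frac{d}{dt}e^{\gk\psi_{\gk}(x_{\gk}(t))}.
\end{equation*}
Integrating over $[0,T]$, using $\psi_{\gk}(x_{\gk}(\cdot))\leq 0$ (so the telescoped boundary term $e^{\gk\psi_{\gk}(x_{\gk}(0))}-e^{\gk\psi_{\gk}(x_{\gk}(T))}$ is at most $1$), and applying Young's inequality $\langle\dot{x}_{\gk},f_{\Vphi}\rangle\leq \frac{1}{2}\|\dot{x}_{\gk}\|^2+\frac{1}{2}\bar{M}^2$, gives $\int_0^T\|\dot{x}_{\gk}(t)\|^2\,dt\leq \bar{M}^2 T+2$, hence $x_{\gk}\in W^{1,2}([0,T];\R^n)$. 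The $L^\infty$ bound then follows from $\|x_{\gk}(t)\|\leq\|c_{\gk}\|+\int_0^t\|\dot{x}_{\gk}(s)\|\,ds$ and Cauchy--Schwarz, and the same estimate produces the uniform H\"older-$\frac{1}{2}$ bound $\|x_{\gk}(t)-x_{\gk}(s)\|\leq \sqrt{|t-s|}\sqrt{\bar{M}^2T+2}$, which delivers equicontinuity of $(x_{\gk})_k$.
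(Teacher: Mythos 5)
Your proof is correct, and in substance it is the argument the paper relies on: the paper's own proof of this lemma consists only of checking that $\psi_{\gk}$ and $C^{\gk}$ satisfy the hypotheses placed on $\psi$ and $C$ in \cite[Lemma 4.1]{VCpaper} and then quoting that lemma (with $\psi$, $C$ replaced by $\psi_{\gk}$, $C^{\gk}$), whereas you reconstruct the underlying argument directly: a first-exit/contradiction barrier argument for the invariance of $C^{\gk}$, using Proposition \ref{proppsigk}$(ii)$ and $\gk>\tfrac{2\bar{M}}{\eta}$ to make $\tfrac{d}{dt}\psi_{\gk}(x_{\gk}(t))$ strictly negative wherever $\psi_{\gk}\geq 0$, followed by the telescoping identity $\gk e^{\gk\psi_{\gk}(x_{\gk})}\tfrac{d}{dt}\psi_{\gk}(x_{\gk})=\tfrac{d}{dt}e^{\gk\psi_{\gk}(x_{\gk})}$ and Young's inequality for the $W^{1,2}$ bound; the constants you obtain match those imported from \cite{VCpaper} exactly. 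Two small points you should make explicit. First, \textnormal{(A1)}$_{\G}$ gives continuity, the Lipschitz property, and the bound $\|f_{\Vphi}\|\leq\bar{M}$ only for $x\in C$, while your barrier argument evaluates $f_{\Vphi}$ along the trajectory at points just outside $C^{\gk}$; this is harmless here because for $r>1$ the level set $\{\psi_{\gk}=0\}$ lies in $\inte C$ (if $\sum_{i=1}^r e^{\gk\psi_i(x)}=1$ with $r>1$, then $\psi_i(x)<0$ for every $i$), so after shrinking $\delta$ the trajectory remains in $\inte C$ where \textnormal{(A1)}$_{\G}$ applies, and the case $r=1$ is precisely \cite[Lemma 4.1]{VCpaper}, where the extension of $f$ off $C$ is dealt with. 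Second, before absorbing $\tfrac12\int_0^T\|\dot{x}_{\gk}\|^2\,dt$ into the left-hand side you should observe that this integral is finite a priori (once invariance is known, along the solution the penalty term is bounded by $\gk\bar{M}_\psi$, so $\dot{x}_{\gk}\in L^{\infty}$), which legitimizes the absorption and yields the $k$-uniform bound $\bar{M}^2T+2$.
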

\begin{proof} By Proposition \ref{proppsigk}, we have that for each $k\geq k_3$, $\psi_{\gk}$ satisfies the {\it same assumptions} satisfied by the function $\psi$ of \cite[Lemma 4.1]{VCpaper}. Moreover,  the system $(D_{\gk})$ utilized in \cite[Lemma 4.1]{VCpaper}, in which  $\psi$ is replaced by $\psi_{\gk}$, coincides with our system $(D_{\gk})$, see \eqref{Dgk2}. Therefore, Lemma \ref{invariance} follows immediately from \cite[Lemma 4.1]{VCpaper}, where $\psi$ and $C$ are replaced by $\psi_{\gk}$ and $C^{\gk}$, respectively. We note that in \cite[Lemma 4.1]{VCpaper}, $\|c_{\gk}\|$ is replaced by  a bound $\a_0>0$ of the sequence $(c_{\gk})_k$.   \end{proof}

We denote by $\xi_{\gk}(\cdot)$ the sequence of non-negative continuous functions on $[0,T]$ corresponding to the solution $x_{\gk}$, obtained in Lemma \ref{invariance}, and  defined by
 \begin{equation} \label{defxi} \xi_{\gk}(\cdot):= \gk e^{\gk\psi_{\gk}(x_{\gk}(\cdot))} \overset{\eqref{psigkdef}}{=}\sum_{i=1}^{r}\xi_{\gk}^i(\cdot),\;\hbox{where}\;\xi_{\gk}^i(\cdot):=\gk e^{\gk\psi_i(x_{\gk}(\cdot))},  \; i=1,\dots,r.\end{equation}
The  following result illustrates the tight link between the solutions of $(D_{\gk})$ and those of $(D)$. It is parallel to \cite[Theorem 4.1]{VCpaper}, and it follows using arguments similar to those used in the proofs of \cite[Theorem 4.1]{VCpaper} and \cite[Theorem 4.1]{verachadi}. See also the proof of \cite[Theorem 2.2]{pinhonew} where a simplified method is used for special setting.

\begin{theorem}[Convergence of $(\xi_{\gk})_k$ -- $(D_{\gk})_k$ approximates $(D)$] \label{DgktoD} Assume that \textnormal{(A1)}$_{\G}$, \textnormal{(A2.1)-(A2.2)} and \textnormal{(A2.5)-(A4.1)} hold. Then we have the following\sp$:$\vspace{0.15cm}\\
{\bf (I).} Let  be given  a sequence $(c_{\gk})_k$  in $C^{\gk}$ converging to $x_0 \in C_0$ and  a sequence $(u_{\gk})_k$ in $\mathscr{U}$. For $k\ge k_3$, denote by  $x_{\gk}$  the solution of $(D_{\gk})$  corresponding to $(x(0), u)=(c_{\gk},u_{\gk})$  which is obtained via \textnormal{Lemma \ref{invariance}} and hence,  remains in $C^{\gk}$. Then, the following statements are valid$\sp:$
\begin{enumerate}[$(i)$]
 \item  The sequence $(x_{\gk})_k$  admits a subsequence, we do not relabel, that converges uniformly to some $x \in W^{1,2}([0,T];\R^n)$ whose values are in $C$, and its derivative $\dot{x}_{\gk}$ converges weakly in $L^2$ to $\dot{x}$. Moreover, \begin{equation} \label{eqprop}\|x\|_{\infty}\leq \|x_0\|+\sqrt{\left(\bar{M}T\right)^2+2T}\;\;\hbox{and}\;\;\int_0^T\|\dot{x}(t)\|^2\sp dt\leq \bar{M}^2T+2.\end{equation} 
 \item The sequence $(\|\xi_{\gk}\|^2_2)_k$ is bounded by a positive constant $M_\xi$, and there exists a subsequence of $(\gk)_k$, we do not relabel, such that  the sequence of vector functions  $(\xi^1_{\gk}, \dots, \xi^r_{\gk})_k$  converges weakly in $L^2$ to  a nonnegative vector function \sloppy $(\xi^1,\dots, \xi^r)\in L^2([0,T],\R^r)$, and hence,  $\xi_{\gk}$ converges weakly in $L^2$ to $\xi:=\sum_{i=1}^{r}\xi^i$, with 
\begin{equation}\label{xiprop1} 
 \xi^i(t)=0\;\;\hbox{for all}\;\,t\in I_i^{\-}(x)\;\hbox{and for all}\;\;i\in\{1,\dots r\},\end{equation} 
  \begin{equation}\label{xiprop}\hspace{-.88 in}\hbox{and}\hspace{.15 in}\xi(t)=0\;\;\hbox{for all}\;\,t\in I^{\-}(x).
  \end{equation}
 \item If the subsequence $u_{\gk}(t)\xrightarrow[]{\textnormal{a.e}.\;t\,} u(t)$, where ${u}\in \mathscr{U}$, then $x$ is the unique solution of $(D)$ corresponding to $(x_0,u)$ and the triplet $(x, u, (\xi^i)_{i=1}^r)$ satisfies \crefrange{admissible-P}{eqVera3} stated below.
\item If \textnormal{(A4.3)} holds and $f(t, x, U(t))$ is convex  for $x\in C$ and $t\in [0,T]$ \textnormal{a.e.},  then there exists ${u}\in \mathscr{U}$ such that the limit function $x$ is the unique solution of $(D)$ corresponding to $(x_0,u)$. Moreover, we have 
\begin{equation}\label{admissible-P}\dot{x}(t)= f_\Vphi(t,x(t),u(t))-\sum_{i=1}^r\xi^i(t) \nabla\psi_i(x(t)),\;\;\;t\in[0,T]\; \textnormal{a.e.}\end{equation} 
\begin{equation}\label{xibounded}  \|\xi^i\|_{\infty}\leq\frac{\bar{M}}{2\eta}\;\;\hbox{for}\,\;i=1,\dots,r,\;\,\hbox{and}\;\,\|\xi\|_{\infty}\leq\frac{\bar{M}}{2\eta}. \end{equation}
\begin{equation}\label{eqVera2} \| f_\Vphi(t, x(t), u(t))-\dot{x}(t )\| \le \| f_\Vphi(t, x(t), u(t))\|,\;\;\;t\in[0,T]\; \textnormal{a.e.}\end{equation} 
\begin{equation} \label{eqVera3} \hspace{-1cm}  \< f_\Vphi(t, x(t), u(t))-\dot{x}(t ), z-x(t)\> \leq  \frac{\bar{M}M_\psi}{2 \eta}\left\|z-x(t)\right\|^2, \;\,t\in [0,T]\; \textnormal{a.e.,}\;\,\forall z\in C.  \end{equation}
 \end{enumerate}
{\bf (II).} Conversely,  for given  $x_0\in C_0$ and $\u\in \mathscr{U}$, the system $(D)$ with  $(x(0), u)= (x_0, \u)$ has a unique solution $\x$, which  is the uniform limit of a subsequence of $(x_{\gk})_k$ that is not relabeled, where for each $k$,  $x_{\gk}$ is the solution of the system $(D_{\gk})$ corresponding to $(x(0), u)=(c_{\gk}, \u)$, and $c_{\gk}$ is the sequence of \textnormal{Remark \ref{intelemma(k)}} that corresponds to $c=x_0$. Hence, for $k$ sufficiently large, $x_{\gk}(t)\in C^{\gk}$ for all $t\in[0,T]$. Moreover, for ${\xi}^i$ the $L^2$-weak limit of $\xi^i_{\gk}$ obtained in $(ii)$, for $i=1,\dots,r$, the triplet $(\x,\u, ({\xi}^i)_{i=1}^r)$  satisfies \crefrange{eqprop}{eqVera3}. \end{theorem}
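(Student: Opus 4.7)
The plan is to mirror the strategy of \cite[Theorem 4.1]{VCpaper} (which handled the case $r=1$), with the modifications dictated by the presence of the $r$ penalty terms in $(D_{\gk})$ and by the properties of the approximating functions $\psi_{\gk}$ established in Propositions \ref{proppsigk} and \ref{propcgk(k)}. Fix $k\ge k_3$ so that Lemma \ref{invariance} applies. For Part (I)(i), the uniform bounds on $\|x_{\gk}\|_\infty$ and $\|\dot x_{\gk}\|_2$ coming from Lemma \ref{invariance}, together with the convergence $c_{\gk}\to x_0$, give uniform boundedness and equicontinuity of $(x_{\gk})_k$. Arzel\`a--Ascoli yields a uniformly convergent subsequence, and weak compactness of bounded sets in $L^2$ yields weak $L^2$-convergence of $\dot x_{\gk}$ to $\dot x$. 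The limit $x$ belongs to $C$ because each $x_{\gk}(t)\in C^{\gk}\subset C$ and $C$ is closed, and the estimates \eqref{eqprop} pass to the limit by lower semicontinuity of $\|\cdot\|_2$.

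For Part (I)(ii), the crucial step is a uniform $L^2$-bound on $\xi_{\gk}=\sum_i\xi^i_{\gk}$. Rewriting $(D_{\gk})$ in the form \eqref{Dgk1}, we have $\sum_{i=1}^{r}\xi^i_{\gk}(t)\nabla\psi_i(x_{\gk}(t))=f_\Vphi(t,x_{\gk},u_{\gk})-\dot x_{\gk}(t)$. If $\xi_{\gk}(t)>0$, divide by $\xi_{\gk}(t)$ and apply (A2.2) to the convex combination $\lambda_i=\xi^i_{\gk}(t)/\xi_{\gk}(t)$, which is legitimate once we know $\psi_i(x_{\gk}(t))$ must be close to zero for $\xi^i_{\gk}$ to be nonnegligible; extending (A2.2) to a neighborhood of the active boundary by continuity and the compactness of $C$ produces the lower bound $2\eta\xi_{\gk}(t)\le \|f_\Vphi(t,x_{\gk},u_{\gk})-\dot x_{\gk}(t)\|$, giving $\|\xi_{\gk}\|_2^2\le M_\xi$ for some constant $M_\xi$ independent of $k$. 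Banach--Alaoglu then yields a (vector-valued) weak $L^2$-limit $(\xi^1,\ldots,\xi^r)$ with $\xi^i\ge 0$ a.e. To obtain \eqref{xiprop1}, observe that on any compact subset of $I_i^{-}(x)$, uniform convergence of $x_{\gk}\to x$ gives $\psi_i(x_{\gk}(t))\le -\delta<0$ for $k$ large, hence $\xi^i_{\gk}(t)=\gk e^{\gk\psi_i(x_{\gk}(t))}\to 0$ uniformly, forcing $\xi^i\equiv 0$ on $I_i^{-}(x)$; summing gives \eqref{xiprop}.

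For Part (I)(iii), pass to the limit in the integrated form of $(D_{\gk})$. The term $f_\Vphi(t,x_{\gk},u_{\gk})\to f_\Vphi(t,x,u)$ pointwise by (A1)$_\G$ and the assumed convergence $u_{\gk}\to u$ a.e., with dominated convergence giving strong $L^1$ convergence. The penalty term $\sum_i\xi^i_{\gk}\nabla\psi_i(x_{\gk})$ passes to $\sum_i\xi^i\nabla\psi_i(x)$ because $\nabla\psi_i(x_{\gk})\to\nabla\psi_i(x)$ uniformly and $\xi^i_{\gk}\rightharpoonup\xi^i$ weakly in $L^2$, so the product converges weakly in $L^1$. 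This yields \eqref{admissible-P}, and uniqueness of solutions of $(D)$ (which follows from the prox-regularity of $C$ established in Proposition \ref{prop1}) shows that $x$ is the unique solution associated with $(x_0,u)$. For Part (I)(iv), when one does not have $u_{\gk}\to u$, apply a Filippov-type selection argument: the bound \eqref{xibounded} is derived by observing that for $i\in\I^0_{x(t)}$ the coefficient $\xi^i$ inherits from the pointwise estimate $\xi_{\gk}(t)\le(\bar M+\|\dot x_{\gk}(t)\|)/(2\eta)$ the a.e.\ bound $\bar M/(2\eta)$ once $\dot x_{\gk}\to\dot x$ strongly (which holds after an additional $L^2$-boundedness and convexity reduction, using (A4.3) and the convexity of $f(t,x,U(t))$ to realize $\dot x(t)\in f(t,x(t),U(t))-\sum_i\xi^i(t)\nabla\psi_i(x(t))$ via a measurable selection $u$); \eqref{eqVera2} follows from $\|f_\Vphi-\dot x\|=\|\sum_i\xi^i\nabla\psi_i\|$ and the maximality of the normal cone direction in the velocity; \eqref{eqVera3} follows from the proximal normal inequality for the $\frac{\eta}{M_\psi}$-prox-regular set $C$ (Proposition \ref{prop1}(i)), combined with the bound on $\xi$.

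For Part (II), starting from a given admissible pair $(\x,\u)$ of $(D)$, take $c_{\gk}$ as in Remark \ref{intelemma(k)}, so $c_{\gk}\in C^{\gk}(k)\subset C^{\gk}$ and $c_{\gk}\to x_0$. Let $x_{\gk}$ be the unique solution of $(D_{\gk})$ with data $(c_{\gk},\u)$ given by Lemma \ref{invariance}. Apply Part (I) with the constant sequence $u_{\gk}\equiv\u$: item (iii) applies directly, producing a subsequential uniform limit $x$ which solves $(D)$ with $(x_0,\u)$. By the uniqueness of solutions of $(D)$ this limit must be $\x$, and a standard subsequence argument upgrades this to convergence of the original sequence. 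The triplet $(\x,\u,(\xi^i)_{i=1}^r)$ then satisfies \crefrange{eqprop}{eqVera3} by the same limit arguments. The main obstacle throughout is the uniform $L^2$-bound on $\xi_{\gk}$ in Part (I)(ii): one must invoke (A2.2) at points $x_{\gk}(t)$ that need not lie on $\bdry C$ (only on level sets of $\psi_{\gk}$), which requires the extension of (A2.2) by continuity together with the control of how $\psi_i(x_{\gk})$ must be close to $0$ whenever $\xi^i_{\gk}$ contributes significantly.
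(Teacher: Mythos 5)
Your overall scheme is the same one the paper uses (the paper does not rewrite this proof but defers to the arguments of \cite[Theorem 4.1]{VCpaper} and \cite[Theorem 4.1]{verachadi}): invariance plus Arzel\`a--Ascoli for (i), a uniform $L^2$ bound on the multipliers and weak compactness for (ii), strong-times-weak passage to the limit and a Filippov selection for (iii)--(iv), and prox-regularity for \eqref{eqVera3}. However, two steps of your argument would fail as written. First, the pointwise inequality $2\eta\,\xi_{\gk}(t)\le\|f_\Vphi(t,x_{\gk}(t),u_{\gk}(t))-\dot x_{\gk}(t)\|$ is not valid in general: the coefficients $\xi^i_{\gk}(t)/\xi_{\gk}(t)$ are \emph{relative} weights, and a weight being non-negligible only forces $\psi_i(x_{\gk}(t))$ to be close to $\max_j\psi_j(x_{\gk}(t))$, not close to $0$; when all $\psi_i(x_{\gk}(t))$ are strongly negative, the normalized combination (which is exactly $\nabla\psi_{\gk}(x_{\gk}(t))$) can have arbitrarily small norm, so no ``extension of (A2.2) by continuity'' applies there. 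The correct argument is the dichotomy the paper prepared precisely for this purpose, applied to the single-penalty form \eqref{Dgk2} of $(D_{\gk})$ together with Proposition \ref{proppsigk}$(iii)$: for $k\ge k_2$, either $\|\nabla\psi_{\gk}(x_{\gk}(t))\|>\eta$, in which case $\eta\,\xi_{\gk}(t)\le \bar{M}+\|\dot x_{\gk}(t)\|$, or $\|\nabla\psi_{\gk}(x_{\gk}(t))\|\le\eta$, in which case $\psi_{\gk}(x_{\gk}(t))<-\e_o$ (recall $x_{\gk}(t)\in C^{\gk}$ by Lemma \ref{invariance}) and hence $\xi_{\gk}(t)=\gk e^{\gk\psi_{\gk}(x_{\gk}(t))}\le \gk e^{-\gk\e_o}$, which is bounded uniformly in $k$. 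Combining the two cases with the $L^2$ bound on $\dot x_{\gk}$ from Lemma \ref{invariance} gives the uniform bound $\|\xi_{\gk}\|_2^2\le M_\xi$ that your proposal asserts.

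Second, your justification of \eqref{xibounded} via ``strong convergence of $\dot x_{\gk}$'' is not available: only weak $L^2$ convergence of the derivatives holds, and no convexity reduction upgrades it; moreover the pointwise bound $\xi_{\gk}(t)\le \frac{2\bar M}{\eta}$ is proved in the paper only when the trajectories start in $C^{\gk}(k)$ (Theorem \ref{DgktoD(k)}), not under the hypotheses of the present theorem. Instead, \eqref{xibounded} is obtained from the \emph{limit} system: by \eqref{xiprop1} only active gradients appear in \eqref{admissible-P}, so $f_\Vphi(t,x(t),u(t))-\dot x(t)=\sum_{i\in\I^0_{x(t)}}\xi^i(t)\nabla\psi_i(x(t))\in N_C(x(t))$ a.e.; since $x(\cdot)$ stays in $C$ and $C$ is prox-regular (hence tangentially regular), $\pm\dot x(t)\in T_C(x(t))$ a.e., which gives $\<f_\Vphi(t,x(t),u(t))-\dot x(t),\dot x(t)\>=0$ and therefore \eqref{eqVera2}; then (A2.2) applied to the normalized active combination yields $2\eta\,\xi(t)\le\|f_\Vphi(t,x(t),u(t))-\dot x(t)\|\le\bar{M}$, which is \eqref{xibounded}. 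With \eqref{eqVera2} in hand, your derivation of \eqref{eqVera3} from the $\frac{\eta}{M_\psi}$-prox-regularity of $C$ via the proximal normal inequality is correct, and the remaining parts of your outline (vanishing of $\xi^i$ on $I^{\-}_i(x)$, strong-times-weak limit of the penalty term, uniqueness of solutions of $(D)$, and the reduction of Part (II) to Part (I) with $u_{\gk}\equiv\u$) are in order.
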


\begin{remark} \label{Dforms} Let  $u\in \mathscr{U}$ and $x\in W^{1,1}([0,T];\R^n)$ with  $x(0)\in C_0$ and $x(t)\in C$ for all $t\in [0,T]$. From the statement and the proof of Theorem \ref{DgktoD}, and from \eqref{subpsi}, we deduce that under (A1)-(A3), the following assertions are equivalent:
\begin{enumerate}[$(i)$]
\item  $x$  is a solution for  $(D)$ corresponding to the control  $u$.
\item  There exists a finite sequence of nonnegative measurable functions $(\xi^i)_{i=1}^r$ such that for $i=1,\dots,r$, $\xi^i(t)=0$ for all $t\in I_i^{\-}(x)$,  $\|\sum_{i=1}^r\xi^i\|_\infty\leq \frac{\bar{M}}{2\eta}$, and $(x, u,({\xi}^i)_{i=1}^r)$ satisfies 
\begin{eqnarray*}\dot{x}(t)&=&f_\Vphi(t,x(t),u(t))-\sum_{i=1}^r\xi^i(t) \nabla\psi_i(x(t)) \\&= & f_\Vphi(t,x(t),u(t))-\sum_{i\in\I^0_{x(t)}}\xi^i(t) \nabla\psi_i(x(t)),\;\;\;t\in[0,T]\; \textnormal{a.e.}\end{eqnarray*}
\item There exists nonnegative measurable function $\xi$   such that  $\xi(t)=0$ for all $t\in I^{\-}(x)$ and 
   $$\dot{x}(t)\in f_\Vphi(t,x(t),u(t))-\xi(t)\partial\psi(x(t)),\;\;\;t\in[0,T]\; \textnormal{a.e.} $$
\end{enumerate}
The implication $(iii)\implies(ii)$, is established by taking  $\xi_i(t):= \xi (t)\l_i(x(t))$ for $ i\in \I^0_{x(t)}$, where $(\lambda_i(x(t)))_{ i\in \I^0_{x(t)}}$   are the coefficients of the convex combination  of $\nabla\psi_i (x(t))$'s obtained via \eqref{subpsi} for the element in  ($iii$) belonging to $\partial \psi(x(t))$. \end{remark}

\begin{remark}\label{newproof} Note that   when establishing Theorem \ref{DgktoD}\sp{(I)}$(iii)\& (iv)$,  the proof that $(x,u, (\xi^i)_{i=1}^r)$ satisfies \eqref{admissible-P}  uses  arguments independent of  $\xi_{\gk}^i$ being obtained  through \eqref{defxi}, and hence, that proof is valid for $(\xi_{\gk}^1,\dots,\xi_{\gk}^r)_k$  being any sequence of  $L^2([0,T];\R^r)$-functions that converges weakly in $L^2$ to $(\xi^1,\dots,\xi^r)$, as $k\f\infty$. Therefore,  whenever $(x_j,u_j,(\xi_j^i)_{i=1}^r)_j$  is a sequence solving \eqref{admissible-P} with $x_j$ converging uniformly to $x$ and $(\xi^1_j,\dots,\xi^r_j)$ converging weakly in $L^2$ to $(\xi^1,\dots,\xi^r)$, we have  that   $(x,u,(\xi^i)_{i=1}^r)$ satisfies \eqref{admissible-P} for some $u\in\mathcal{U}$. This function $u$ is   the  almost everywhere pointwise limit of $(u_j)_j$, whenever this limit exists.  Otherwise,  $u$ is obtained  through  the Filippov selection theorem,  by assuming that (A4.3) holds and $f(t, x, U(t))$ is convex  for $x\in C$ and $t\in [0,T]$ a.e. \end{remark}

The following theorem presents  important extra features resulting from starting the solutions of $(D_{\gk})$ in Theorem \ref{DgktoD}{(I)} from the subset $C^{\gk}(k)\subset  \inte C^{\gk}\subset \inte C$ defined in \eqref{Cgkkdef}.

\begin{theorem}[Extra properties of $(x_{\gk})_k$ and $(\xi_{\gk})_k$ when $x_{\gk}(0)\in C^{\gk}(k)$] \label{DgktoD(k)} Assume that \textnormal{(A1)}$_{\G}$, \textnormal{(A2.1)-(A2.2)} and  \textnormal{(A2.5)-(A4.1)} hold. Let $(c_{\gk})_k$ be a sequence such that $c_{\gk}\in C^{\gk}(k)$, for $k$ sufficiently large. Then there exists $k_5\geq k_4$ such that for all sequences $(u_{\gk})_k$ in $\mathscr{U}$ and for all $k\geq k_5$, the solution $x_{\gk}$ of $(D_{\gk})$ corresponding to $(c_{\gk},u_{\gk})$ satisfies\sp$:$
\begin{enumerate}[$(i)$]
\item $x_{\gk}(t)\in C^{\gk}(k)\subset \inte C^{\gk}\subset \inte C$ for all $t\in[0,T]$.
\item $0\le \xi_{\gk}^i(t) \leq \xi_{\gk}(t)\le \frac{2\bar{M}}{\eta}$ for all $t\in [0,T]$ and for $i=1,\dots,r.$
\item $ \|\dot{x}_{\gk}(t)\|\leq \bar{M}+\frac{2\bar{M}\bar{M}_\psi}{\eta}$ \,for \textnormal{a.e.} $t\in [0,T]$.
\end{enumerate}
\end{theorem}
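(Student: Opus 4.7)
The strategy is to prove $(i)$ first as the core invariance claim, and then deduce $(ii)$ and $(iii)$ as direct consequences, since once $x_{\gk}$ is confined to $C^{\gk}(k) = \{\psi_{\gk}\le -\a_k\}$, the penalty $\gk e^{\gk\psi_{\gk}}$ is automatically bounded by $\gk e^{-\gk\a_k}=\frac{2\bar M}{\eta}$ by the defining identity of $\a_k$ in \eqref{sigmadef}. For $(i)$, I would argue by contradiction: Lemma \ref{invariance} already gives $x_{\gk}(t)\in C^{\gk}\subset C$, so it suffices to prevent $x_{\gk}$ from entering the band $\{x\in C^{\gk}:\psi_{\gk}(x)>-\a_k\}$. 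The key mechanism is that on this band the penalty direction is already ``fully engaged'' while $\|\nabla\psi_{\gk}\|$ is bounded away from zero, so that $t\mapsto\psi_{\gk}(x_{\gk}(t))$ is strictly decreasing, which contradicts an attempted exit.

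More precisely, I would choose $k_5\ge k_4$ large enough that $\a_k<\e_o$ for all $k\ge k_5$, where $\e_o$ is the constant from Proposition \ref{proppsigk}$(iii)$. Since $\psi_{\gk}\in\CO^{1,1}$ on $C+\rho B$ and $x_{\gk}$ is absolutely continuous with values in $C^{\gk}$, the chain rule gives, for a.e.\ $t\in[0,T]$,
\[
\tfrac{d}{dt}\psi_{\gk}(x_{\gk}(t))=\<\nabla\psi_{\gk}(x_{\gk}(t)),f_\Vphi(t,x_{\gk}(t),u_{\gk}(t))\>-\gk e^{\gk\psi_{\gk}(x_{\gk}(t))}\|\nabla\psi_{\gk}(x_{\gk}(t))\|^2.
\]
Whenever $\psi_{\gk}(x_{\gk}(t))\ge -\a_k>-\e_o$, the contrapositive of Proposition \ref{proppsigk}$(iii)$ yields $\|\nabla\psi_{\gk}(x_{\gk}(t))\|>\eta$, and $\gk e^{\gk\psi_{\gk}(x_{\gk}(t))}\ge\gk e^{-\gk\a_k}=\frac{2\bar M}{\eta}$. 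Combining with $\|f_\Vphi\|\le\bar M$ from (A1)$_{\G}$ and Remark \ref{newD}, this yields
\[
\tfrac{d}{dt}\psi_{\gk}(x_{\gk}(t))\le \|\nabla\psi_{\gk}(x_{\gk}(t))\|\bigl(\bar M-\tfrac{2\bar M}{\eta}\|\nabla\psi_{\gk}(x_{\gk}(t))\|\bigr)<-\bar M\eta<0.
\]
If $x_{\gk}$ were to leave $C^{\gk}(k)$, then since $\psi_{\gk}(x_{\gk}(0))\le -\a_k$, I could set $t_0:=\sup\{t\le t^*:\psi_{\gk}(x_{\gk}(t))\le -\a_k\}$ for some $t^*$ with $\psi_{\gk}(x_{\gk}(t^*))>-\a_k$, and integrate the above strict inequality over $[t_0,t^*]$ to contradict $\psi_{\gk}(x_{\gk}(t^*))>\psi_{\gk}(x_{\gk}(t_0))=-\a_k$.

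With $(i)$ in hand, $(ii)$ is immediate: the inequality $0\le\xi_{\gk}^i\le\xi_{\gk}$ follows from the decomposition in \eqref{defxi}, and $\xi_{\gk}(t)=\gk e^{\gk\psi_{\gk}(x_{\gk}(t))}\le\gk e^{-\gk\a_k}=\frac{2\bar M}{\eta}$ by the characterization of $C^{\gk}(k)$ in \eqref{Cgkkdef}. For $(iii)$, I would take norms in the form \eqref{Dgk2} of $({D}_{\gk})$ and use $(ii)$ together with $\|\nabla\psi_{\gk}\|\le\bar M_\psi$ from \eqref{gradbdd} to get $\|\dot x_{\gk}(t)\|\le\bar M+\frac{2\bar M\bar M_\psi}{\eta}$. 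The main obstacle, as is typical for such invariance arguments, is the derivative estimate on the band $\{-\a_k\le\psi_{\gk}\le 0\}$: the bound on $\|\nabla\psi_{\gk}\|$ from below is only $\eta$ (from Proposition \ref{proppsigk}$(iii)$, not the stronger $2\eta$ on $\{\psi_{\gk}=0\}$), so the calculation only closes because the specific choice $\a_k=\frac{1}{\gk}\ln(\frac{\eta\gk}{2\bar M})$ makes $\gk e^{-\gk\a_k}\|\nabla\psi_{\gk}\|$ strictly exceed $\|f_\Vphi\|$; this tight balancing between $\a_k$, $\eta$, and $\bar M$ is what ultimately powers the invariance.
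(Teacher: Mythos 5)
Your proposal is correct, and it in fact reconstructs the mechanism that the paper only invokes by reference: the paper's own proof of this theorem is a two-line delegation, namely it checks (via Proposition \ref{proppsigk} and Remark \ref{veraimp}) that $\psi_{\gk}$ and $C^{\gk}$ satisfy the hypotheses imposed on $\psi$ and $C$ in the $r=1$ result \cite[Theorem 5.1]{VCpaper}, whose system coincides with \eqref{Dgk2}, and then cites that theorem with $\psi,C$ replaced by $\psi_{\gk},C^{\gk}$. You instead give the underlying viability/exit-time argument directly: the chain rule for $\psi_{\gk}\circ x_{\gk}$, the lower bound $\|\nabla\psi_{\gk}(x_{\gk}(t))\|>\eta$ on the band $\{-\a_k\le\psi_{\gk}\le 0\}$ obtained from the contrapositive of Proposition \ref{proppsigk}$(iii)$ (valid since $x_{\gk}(t)\in C^{\gk}$ by Lemma \ref{invariance} and $\a_k<\e_o$ for $k$ large), and the identity $\gk e^{-\a_k\gk}=\tfrac{2\bar{M}}{\eta}$, which together force $\tfrac{d}{dt}\psi_{\gk}(x_{\gk}(t))<-\bar{M}\eta<0$ on the band and rule out an exit from $C^{\gk}(k)$; items $(ii)$ and $(iii)$ then follow exactly as you say from \eqref{Cgkkdef}, \eqref{defxi}, \eqref{gradbdd} and the form \eqref{Dgk2} of the dynamics. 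The trade-off is transparency versus economy: your version is self-contained and makes explicit why the specific calibration of $\a_k$ against $\eta$ and $\bar{M}$ in \eqref{sigmadef} is what powers the invariance, whereas the paper's route is shorter and emphasizes that the whole construction of $\psi_{\gk}$, $C^{\gk}$, $C^{\gk}(k)$ was designed precisely so that the $r=1$ machinery of \cite{VCpaper} applies verbatim. I see no gap in your argument.
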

\begin{proof} By Proposition \ref{proppsigk} and Remark \ref{veraimp}, we have that,  for each $k\geq k_3$, the function $\psi_{\gk}$ satisfies the {\it same assumptions} imposed  on the function $\psi$ in  \cite[Theorem 5.1]{VCpaper}, where the system $(D_{\gk})$, in which  $\psi$ is replaced by $\psi_{\gk}$, coincides with our system $(D_{\gk})$.  Thus, Theorem \ref{DgktoD(k)}$(i)$-$(iii)$ follows immediately from \cite[Theorem 5.1$(i)$-$(iii)$]{VCpaper}, where $\psi$ and $C$ are replaced by $\psi_{\gk}$ and $C^{\gk}$, respectively. \end{proof}
 
As a direct consequence of Theorem \ref{DgktoD(k)} and parallel to \cite[Corollary 5.2]{VCpaper}, we have the following.

\begin{corollary}[$\bar{x}_{\gk}$, corresponding to $\x$, in $ C^{\gk}(k)$] \label{coroDgktoD(k)} Assume that \textnormal{(A1)}$_{\G}$, \textnormal{(A2.1)-(A2.2)}, and \textnormal{(A2.5)-(A4.1)} hold. Let  $(\x, \u)$ be a solution of $(D)$. Consider $(\bar{c}_{\gk})_k$ the sequence obtained from \textnormal{Remark \ref{intelemma(k)}} for $c:=\bar{x}(0)$, and  $\x_{\gk}$ the solution of $(D_{\gk})$ corresponding to $(\bar{c}_{\gk},\u)$. Then, there exists $k_7\geq k_6$ such that for all $k\geq k_7$ we have$\sp:$
\begin{enumerate}[$(i)$]
\item $\x_{\gk}(t)\in C^{\gk}(k)\subset \inte C^{\gk}\subset \inte C$ for all $t\in[0,T]$.
\item $0\le \xi_{\gk}^i(t) \leq \xi_{\gk}(t)\le \frac{2\bar{M}}{\eta}$ for all $t\in [0,T]$ and for $i=1,\dots,r.$
\item $ \|\dot{\x}_{\gk}(t)\|\leq \bar{M}+\frac{2\bar{M}\bar{M}_\psi}{\eta}$ \,for \textnormal{a.e.} $t\in [0,T]$.
\end{enumerate}
\end{corollary}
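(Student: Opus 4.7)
The plan is to observe that this corollary is essentially a direct specialization of Theorem \ref{DgktoD(k)}, obtained by choosing the sequence of initial points to be $(\bar{c}_{\gk})_k$ (from Remark \ref{intelemma(k)}) and the control sequence to be the constant sequence $\bar{u}$. Since we are given that $(\bar{x},\bar{u})$ already solves $(D)$, the main task reduces to verifying that the invocation hypotheses of Theorem \ref{DgktoD(k)} are satisfied and then reading off conclusions $(i)$--$(iii)$.

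First, I would confirm that $\bar{c}_{\gk}\in C^{\gk}(k)$ for $k$ sufficiently large. Since $\bar{x}(0)\in C_0\subset C$, Remark \ref{intelemma(k)} produces $(\bar{c}_{\gk})_k$ by distinguishing two cases: if $\bar{x}(0)\in \bdry C$, then $\bar{c}_{\gk}:=\bar{x}(0)+\sigma_k\frac{d_{\bar{x}(0)}}{\|d_{\bar{x}(0)}\|}$ and \eqref{boundarypoint(k)} places $\bar{c}_{\gk}\in \inte C^{\gk}(k)$ for $k\ge \bar{k}_{\bar{x}(0)}$; if $\bar{x}(0)\in \inte C$, then $\bar{c}_{\gk}\equiv \bar{x}(0)\in\inte C^{\gk}(k)$ for $k\ge \hat{k}_{\bar{x}(0)}$. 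In either case $\bar{c}_{\gk}\to \bar{x}(0)$ and, for some threshold (call it $k_6$) depending only on $\bar{x}(0)$, one has $\bar{c}_{\gk}\in C^{\gk}(k)$ for every $k\ge k_6$.

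Next, I would apply Theorem \ref{DgktoD(k)} with the data $(c_{\gk},u_{\gk}):=(\bar{c}_{\gk},\bar{u})$; note that the constant sequence $u_{\gk}\equiv \bar{u}$ lies in $\mathscr{U}$, so the hypotheses of that theorem are met for all $k\ge k_6$. The theorem then supplies a threshold $k_5$ above which the unique solution $\bar{x}_{\gk}$ of $(D_{\gk})$ corresponding to $(\bar{c}_{\gk},\bar{u})$ remains in $C^{\gk}(k)\subset \inte C^{\gk}\subset \inte C$, and satisfies the pointwise bounds on $\xi^i_{\gk}$, $\xi_{\gk}$, and $\|\dot{\bar{x}}_{\gk}\|$ announced in items $(i)$--$(iii)$. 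Setting $k_7:=\max\{k_5,k_6\}$ yields the claimed uniform threshold.

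There is no real obstacle here; the proof is a two-line consequence of matching hypotheses. The only subtlety is bookkeeping the threshold: one must be careful that the specific construction of $(\bar{c}_{\gk})_k$ in Remark \ref{intelemma(k)} genuinely places the initial points inside $C^{\gk}(k)$ and not merely in $C^{\gk}$, because Theorem \ref{DgktoD(k)} requires the strictly sharper inclusion $c_{\gk}\in C^{\gk}(k)$ in order to obtain the uniform $L^\infty$-bound on $\xi_{\gk}$ (and thus on $\dot{\bar{x}}_{\gk}$). Once this is verified, conclusions $(i)$--$(iii)$ transfer verbatim.
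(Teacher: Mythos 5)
Your proposal is correct and coincides with the paper's own treatment: the corollary is stated there precisely as a direct consequence of Theorem \ref{DgktoD(k)}, applied with $(c_{\gk},u_{\gk}):=(\bar{c}_{\gk},\u)$, where Remark \ref{intelemma(k)} (via Proposition \ref{propcgk(k)}$(iii)$ and \eqref{union.Cgk(k)}) guarantees $\bar{c}_{\gk}\in \inte C^{\gk}(k)$ for $k$ large. Your attention to the fact that the initial points must lie in $C^{\gk}(k)$, not merely $C^{\gk}$, is exactly the right bookkeeping point.
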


\subsection{Existence of solutions for $(P)$} The following is an existence of optimal solution result for the problem $(P)$. 

\begin{proposition}[Existence of optimal solution for $(P)$] \label{existencesolP} Assume that \textnormal{(A1)}$_{\G}$, \textnormal{(A2.1)-(A2.2)} and  \textnormal{(A2.5)-(A4)} are satisfied, $g\colon \R^{n}\times\R^n\to \R\cup\{\infty\}$ is lower semicontinuous, and $f(t,x,U(t))$ is convex for all $x\in C$ and $t\in [0,T]$ \textnormal{a.e.}  Then the problem $(P)$ has an optimal solution if and only if it has at least one admissible pair $(z_o,u_o)$  with $(z_o(0),z_o(T))\in \dom g$.
\end{proposition}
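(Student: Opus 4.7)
The direction $(\Rightarrow)$ is immediate: an optimal pair $(\x,\u)$ is by definition admissible and satisfies $g(\x(0),\x(T))<\infty$, so it itself serves as $(z_o,u_o)$. For the nontrivial direction $(\Leftarrow)$, the plan is to execute the classical direct method of the calculus of variations, relying on Remark \ref{newproof} to pass to the limit in the sweeping multiplier and to produce the candidate control.

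Since $\inf(P)\le g(z_o(0),z_o(T))<\infty$, a minimizing sequence of admissible pairs $(x_n,u_n)_n$ exists with $g(x_n(0),x_n(T))\to \inf(P)$. By Remark \ref{Dforms}, for each $n$ there is a nonnegative measurable vector function $(\xi_n^1,\dots,\xi_n^r)$ with $\xi_n^i=0$ on $I_i^{\-}(x_n)$, $\|\sum_i \xi_n^i\|_\infty \le \bar{M}/(2\eta)$, and
\begin{equation*}
\dot x_n(t) = f_\Vphi(t,x_n(t),u_n(t)) - \sum_{i=1}^r \xi_n^i(t)\,\nabla\psi_i(x_n(t))\;\;\textnormal{a.e.}
\end{equation*}
The compactness of $C$, assumed throughout this section, bounds $(x_n)$ uniformly; combined with (A1)$_{\G}$, the boundedness of $\nabla\Vphi$ and of the $\nabla\psi_i$ on $C$, and the $L^\infty$-bound on $\xi_n^i$, this produces an $L^\infty$-bound on $\dot x_n$, so $(x_n)$ is equi-Lipschitz.

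Passing to subsequences not relabeled, Arzelà–Ascoli yields $x_n\to x^*$ uniformly, while Banach–Alaoglu in $L^2$ yields $\dot x_n \rightharpoonup \dot x^*$ and $\xi_n^i \rightharpoonup \xi^{*,i}\ge 0$ weakly in $L^2$. The closedness of $C$, $C_0$, $C_T$ (from (A2.1)–(A2.2), (A4.1)–(A4.2)) forces $x^*(t)\in C$ for all $t$, $x^*(0)\in C_0$, and $x^*(T)\in C_T$. To inherit the support condition, I use that $I_i^{\-}(x^*)=\{t:x^*(t)\in \inte C_i\}$ is open: on any compact $K\subset I_i^{\-}(x^*)$, uniform convergence forces $x_n(t)\in \inte C_i$ for all $t\in K$ and all large $n$, hence $\xi_n^i=0$ on $K$, and weak $L^2$-convergence propagates this to $\xi^{*,i}=0$ a.e.\ on $I_i^{\-}(x^*)$. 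At this stage Remark \ref{newproof}, whose Filippov-selection conclusion is enabled precisely by (A4.3) and by the assumed convexity of $f(t,x,U(t))$, produces a control $u^*\in\mathscr{U}$ such that $(x^*,u^*,(\xi^{*,i})_{i=1}^r)$ satisfies \eqref{admissible-P}; a second invocation of Remark \ref{Dforms} then certifies that $(x^*,u^*)$ solves $(D)$ and is therefore admissible for $(P)$.

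Finally, the lower semicontinuity of $g$ at $(x^*(0),x^*(T))$ gives
\begin{equation*}
g(x^*(0),x^*(T)) \le \liminf_{n\to\infty} g(x_n(0),x_n(T)) = \inf(P),
\end{equation*}
so $(x^*,u^*)$ is optimal. The principal obstacle is the simultaneous passage to the limit inside the nonlinear term $\sum_i \xi_n^i\,\nabla\psi_i(x_n)$ and the extraction of a legitimate measurable control $u^*$: the first is tractable because $\nabla\psi_i$ is continuous on $C$ by (A2.1) and $x_n\to x^*$ uniformly, so weak $L^2$-convergence of the $\xi_n^i$ suffices to identify the limit as $\sum_i \xi^{*,i}\,\nabla\psi_i(x^*)$; the second is precisely the Filippov step packaged in Remark \ref{newproof}, which is why neither the convexity of $f(t,x,U(t))$ nor the measurability hypothesis (A4.3) can be dispensed with.
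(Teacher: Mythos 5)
Your proof is correct and follows essentially the same route as the paper's: a minimizing sequence, Remark \ref{Dforms} to extract the multipliers $(\xi_n^i)$ with their bounds and support conditions, Arzelà–Ascoli plus weak $L^2$ compactness, Remark \ref{newproof} (Filippov, using (A4.3) and the convexity of $f(t,x,U(t))$) to produce the limiting control, and lower semicontinuity of $g$ to conclude. Your added detail on transferring the support condition $\xi^{*,i}=0$ on $I_i^{\-}(x^*)$ via compact subsets and weak convergence is a correct elaboration of a step the paper leaves implicit in its appeal to Remarks \ref{Dforms} and \ref{newproof}.
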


\begin{proof}  Since any admissible pair $(x,u)$ satisfies $(x(0), x(T))\in C_0\times (C_T\cap C)$, then, by the lower semicontinuity of $g$, the compactness of $C_0\times (C_T\cap C)$, and the existence of the admissible pair $(z_o,u_o)$ with $(z_o(0),z_o(T))\in (\dom g)\cap [C_0\times (C_T\cap C)]$, we have that the $\inf_{(x,u)} (P)$ is finite, and hence, ($P$) admits  a minimizing sequence $(x_n, u_n)$. It follows that $x_n$ satisfies \eqref{eqprop} with $x_n(t)\in C$ for all $t\in [0,T]$. Thus, Arzela-Ascoli’s theorem produces  a subsequence  of  $(x_n)_n$, we do not relabel, that converges uniformly to an absolutely continuous $\x,$ with $\x(t)\in C$  for all $t\in [0,T]$. The equivalence between $(i)$ and  $(ii)$ in Remark \ref{Dforms} produces a sequence of $r$-tuple nonnegative measurable functions $(\xi^1_n,\dots,\xi^r_n)_n$ such that, for $i=1,\dots,r$, $\xi_n^i(t)=0$ for all $t\in I_i^{\-}(x)$ and $\|\xi_n^i\|_\infty\leq \frac{\bar{M}}{2\eta}$, and $(x_n, u_n,(\xi_n^i)_{i=1}^r)$ satisfies \eqref{admissible-P}. Consequently, the sequence $(\xi^1_n,\dots,\xi^r_n)_n$ admits a subsequence, we do not relabel,  that converges weakly in $L^2$ to $(\xi^1,\dots,\xi^r)$.   Remark \ref{newproof}  now asserts the existence of a $\u\in\mathcal{U}$ such that  the pair $(\x,\u)$ solves $(D)$. Since $C_T$ is closed and $x_n(1)\in C_T$ for all $n\in \N$, it follows that $\x(T)\in C_T$, and hence,  ($\x,\u$) is admissible for ($P$). On the other hand,  the lower semicontinuity of $g$ implies $$g(\x(0),\x(T)) \le \liminf_{n\to \infty}g(x_n(0),x_n(T))= \inf_{(x,u)} (P).$$ This proves that $(\x,\u)$ is an optimal solution for ($P$).
\end{proof}

\subsection{Approximating problems for $(P)$} \label{pmp} Let $(\x,\u)$ is a {\it strong local minimizer} for $(P)$ with a corresponding  $\delta>0$. The ultimate goal here is to formulate a suitable sequence of {\it standard} optimal control problems  that approximates the problem $(P)$ and whose optimal solutions form a sequence that approximates $(\x,\u)$. The following lemma states that when (A1) and (A6) are satisfied, the function $f$ can be extended to a function $\tilde{f}$ defined on $[0,T]\times\R^n\times \R^m$ and satisfying (A1)$_{\G}$ with  $\tilde{f}(t,x,U(t))$ convex for all $x\in C$ and $t\in[0,T]$ a.e. The proof of this lemma is not displayed here, as it mimics arguments  used at the beginning of \cite[Proof of Theorem 6.2]{VCpaper}, where  such an extension is obtained for the case in which  $U(\cdot)$ is a  {\it constant} multifunction.

\begin{lemma}[Extension of $f$] \label{fextension} Assume that \textnormal{(A1)-(A2.2)}  hold. Then there exists a function $\tilde{f}\colon[0,T]\times\R^n \times \R^m\f\R^n$ such that\sp$:$ \begin{itemize}
\item  For $t\in [0,T]$ \textnormal{a.e.,} we have $$\tilde{f}(t,x,u)=f(t,x,u),\;\;\forall (x, u)\in (C\cap \bar{B}_{\delta}(\x(t)))\times U(t).$$
\item The function $\tilde{f}$ satisfies  \textnormal{(A1)}$_{\G}$ for $M:=2M_\ell$.
\item If also \textnormal{(A6)} is satisfied, then $\tilde{f}(t,x,U(t))$ is convex for all $x\in C$ and $t\in[0,T]$ \textnormal{a.e.}
\end{itemize}
\end{lemma}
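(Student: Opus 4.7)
The plan is a two-stage extension: first extend $f$ in the state variable from the local Lipschitz domain $\Omega_t:=C\cap \bar{B}_{\delta}(\x(t))$ to all of $\R^n$, and then compose with a Lipschitz retraction $\pi_t$ that maps any $x\in C$ back into $\Omega_t$. This composition is what propagates the bound, the Lipschitz constant, and the convexity hypothesis (A6), because for $x\in C$ the values of $\tilde f$ are actually values of $f$ evaluated at a point of $\Omega_t$, where the original hypotheses apply.

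For the first stage, fix $(t,u)$ with $u\in U(t)$ and apply Kirszbraun's theorem to the $M_\ell$-Lipschitz map $x\mapsto f(t,x,u)$ defined on the compact set $\Omega_t$; this yields an $M_\ell$-Lipschitz vector-valued extension $\hat f(t,\cdot,u)\colon \R^n\to \R^n$ with the same Lipschitz constant. To preserve measurability in $t$ and continuity in $(x,u)$, I would use an explicit inf-convolution formula of Whitney--McShane type (coordinate-wise, absorbing any $\sqrt{n}$ into the constant), and extend the $u$-dependence to $\R^m$ by composing with a measurable retraction of $\R^m$ onto the closed graph of $U(\cdot)$, which exists under (A4.3).

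For the second stage, construct $\pi_t\colon\R^n\to\Omega_t$ as follows. By \textnormal{Proposition \ref{prop1}}$(i)$, $C$ is $\tfrac{\eta}{M_\psi}$-prox-regular, so the metric projection onto $C$ is single-valued and Lipschitz on a tubular neighborhood of $C$; combining this with the $1$-Lipschitz metric projection onto the convex ball $\bar{B}_\delta(\x(t))$, and performing a Lipschitz patching on the ``corner'' $\bdry C\cap \S_\delta(\x(t))$, produces a Lipschitz map $\pi_t$ that is the identity on $\Omega_t$, sends $C$ into $\Omega_t$, and has Lipschitz constant bounded by $2$ (by absorbing the patching into the prox-regularity estimates). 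Finally, set $\tilde f(t,x,u):=\hat f(t,\pi_t(x),u)$. Agreement on $\Omega_t\times U(t)$ is then immediate, and for $(x,u)\in C\times U(t)$ the estimate $\|\tilde f(t,x,u)\|\le \|f(t,\x(t),u)\|+M_\ell\|\pi_t(x)-\x(t)\|\le M_\ell(1+\delta)\le 2M_\ell$, together with the composite $x$-Lipschitz constant $M_\ell\cdot \mathrm{Lip}(\pi_t)\le 2M_\ell$, yields (A1)$_\G$ with $M=2M_\ell$ (after shrinking $\delta$ if necessary, which only strengthens the local-minimizer hypothesis). Under (A6), since $\pi_t(x)\in\Omega_t$ for every $x\in C$, we get $\tilde f(t,x,U(t))=f(t,\pi_t(x),U(t))$, which is convex by assumption.

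The principal obstacle will be the simultaneous control of the Lipschitz constant and the measurable dependence on $t$ of the retraction $\pi_t$: stitching together the Lipschitz projection onto the prox-regular set $C$ with the convex projection onto $\bar{B}_\delta(\x(t))$ must be done so that the composite both lands in $\Omega_t$ and remains globally Lipschitz with constant independent of $t$, while inheriting Borel measurability in $t$ from the absolute continuity of $\x$ and the explicit formulas involved. The factor $2$ in $M=2M_\ell$ is exactly the Lipschitz constant absorbed in this stitching, and its identification is what distinguishes the argument from a routine Kirszbraun extension.
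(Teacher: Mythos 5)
Your overall idea---evaluate $f$ at a point obtained by pushing $x\in C$ back into the good set $\Omega_t:=C\cap\bar{B}_{\delta}(\x(t))$---is the right kind of idea, but the step that carries all the weight is not established, and as stated it is doubtful.

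The claimed existence of a retraction $\pi_t:\R^n\to\Omega_t$ that is the identity on $\Omega_t$, maps $C$ into $\Omega_t$, and has Lipschitz constant at most $2$ \emph{uniformly in $t$} is precisely the hard part, and ``performing a Lipschitz patching on the corner $\bdry C\cap\S_{\delta}(\x(t))$'' and ``absorbing the patching into the prox-regularity estimates'' do not prove it. The set $\Omega_t$ is an intersection of a $\tfrac{\eta}{M_\psi}$-prox-regular set with a ball; such an intersection need not be prox-regular, and at the corner $\bdry C\cap\S_{\delta}(\x(t))$ the two boundaries can meet at an angle that degenerates as $t$ varies (for instance when $\bdry C$ becomes nearly tangent to $\S_{\delta}(\x(t))$). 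In that regime any retraction that is the identity on $\Omega_t$ must turn a sharp corner, and its Lipschitz constant cannot be bounded independently of $t$ without some additional argument. Nothing in \textnormal{(A1)--(A2.2)} gives such control, and the proposal never identifies the mechanism that would. Several smaller points compound this: the displayed bound $\|\tilde f\|\le M_\ell(1+\delta)\le 2M_\ell$ silently requires $\delta\le 1$, whereas the cleaner bound $\|\tilde f(t,x,u)\|=\|f(t,\pi_t(x),u)\|\le M_\ell$ (from \textnormal{(A1)}, since $\pi_t(x)\in\Omega_t$) needs no such restriction and actually shows the factor $2$ cannot be coming from the sup bound; the appeal to \textnormal{(A4.3)} for extending in $u$ is outside the hypotheses of the lemma and is in any case unnecessary, since \textnormal{(A1)}$_\G$ and \textnormal{(A6)} only speak of $u\in U(t)$; and the Kirszbraun/McShane step is redundant once you have $\pi_t$, because then $\tilde f(t,x,u)=f(t,\pi_t(x),u)$ already does everything.

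A workable variant of the same strategy avoids the corner entirely: compose the $1$-Lipschitz metric projection onto a \emph{smaller} ball $\bar{B}_{\delta'}(\x(t))$ (with $\delta'$ chosen so that $\delta'<\tfrac{\eta}{2M_\psi}$ and $\delta'+\sup_y d_C(y)\le\delta$ on the relevant tube) with the projection $P_C$, which by prox-regularity is single-valued and $2$-Lipschitz on $C+\delta'\bar B$. The composite is $2$-Lipschitz, is the identity on $C\cap\bar{B}_{\delta'}(\x(t))$, lands in $\Omega_t$, and depends on $t$ only through the continuous trajectory $\x(t)$, which settles measurability. The price is that agreement with $f$ holds only on $C\cap\bar{B}_{\delta'}(\x(t))$ rather than on all of $\Omega_t$; but since $\delta$ in the definition of a strong local minimizer can always be decreased (as the paper does with $\delta_o\le\delta/2$ in the sequel), this costs nothing, and the factor $2=M/M_\ell$ is then exactly the prox-regularity Lipschitz constant of $P_C$, not an artefact of a corner-patching you cannot carry out.
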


Now let $\delta_{\bbbp o}>0$ be fixed such that \begin{equation*}\label{delta0def} \delta_{\bbbp o}\leq \begin{cases}\min\big\{\hat{r}_{\x(0)},\frac{\delta}{2}\big\}&\qquad\;\hbox{if}\;\x(0)\in\inte C,\vspace{0.1cm}\\ \min\big\{r_{\x(0)},\frac{\delta}{2}\big\}&\qquad\;\hbox{if}\;\x(0)\in\bdry C,\end{cases} \end{equation*}
where $\hat{r}_{\x(0)}$ and  ${r}_{\x(0)}$ are, respectively,  the constants in  Remark \ref{intelemma(k)}$(ii)$  and Proposition \ref{propcgk(k)}$(iii)$  corresponding to $c:=\x(0)$. We introduce the following notations:
\begin{itemize}
\item The trajectory $\x_{\gk}$ is the solution of ($D_{\gk}$) obtained via Corollary \ref{coroDgktoD(k)}. That is, it corresponds to ($\bar{c}_{\gk}, \u$), where $\bar{c}_{\gk}$ is defined via Remark \ref{intelemma(k)} for  $c:=\x(0)$, and hence, $\bar{c}_{\gk}\in C^{\gk}(k)$ for $k$ sufficiently large.
 \item The sequence of sets $(C_0^{\gk}(k))_k$,  is defined by  \begin{equation*}\label{c0kdef} C_0^{\gk}(k):= 
 \begin{cases}C_0\cap \bar{B}_{\delta_{\bbbp o}}\bp\left({\x(0)}\right) ,\;\forall k\in\N, &\;\hbox{if}\;\x(0)\in\inte C,\vspace{0.1cm}\\ \left[C_0\cap\bar{B}_{\delta_{\bbbp o}}\bp\left({\x(0)}\right)\right]+\sigma_k\frac{d_{\x(0)}}{\|d_{\x(0)}\|},\;\forall k\in\N,&\;\hbox{if}\; \x(0)\in\bdry C.\end{cases}\end{equation*} 
\item The sequence of sets $(C_T(k))_k$ is defined by \begin{equation*} \label{C1kdefinition}  C_T(k):=\left[\left(C_T\cap\bar{B}_{\delta_{\bbbp o}}\bp({\x(T)})\right) -\x(T)+{\x}_{\gk}(T)\right]\cap C,\;\;\;k\in\N.\end{equation*} 
\item The function $L\colon[0,T]\times \R^n\f\R$  is defined by 
\begin{equation} \label{Ltx} L(t,x):=\max\left\{\|x-\x(t)\|^2-\frac{\delta^2}{4},0\right\},\;\;\forall (t,x)\in [0,T]\times C. \end{equation} 
It is easy to see that ${L}(\cdot,x)$ is measurable, and ${L}(t,\cdot)$ is Lipschitz on $C$ uniformly in $t\in[0,T]$.
 \end{itemize}

A well known optimal control technique (see e.g., \cite{vinter}) is used to show that a strong local minimizer of an optimal control problem is a global minimizer for another problem,  obtained by adding  to the objective function the integral of the function $L$, defined in \eqref{Ltx},   and  by localizing the endpoints  constraints.  This technique is also employed  in \cite[Proof of Theorem 6.2]{VCpaper} to prove  the following lemma, which states  that $(\x,\u)$  is a {\it global} minimizer for the new problem $(\tilde{P})$. 

\begin{lemma}[$(\x,\u)$ global solution for $(\tilde{P})$] \label{localtoglobal} Assume that \textnormal{(A1)-(A2.2)} hold, and let $\tilde{f}$ be the extension of $f$ obtained in \textnormal{Lemma \ref{fextension}}. Assume further that \textnormal{(A4.1)-(A4.2)} are satisfied and $g$ is continuous on $\tilde{C}_0(\delta)\times\tilde{C}_T(\delta)$ for some $\tilde{\rho}>0$. Then, $(\x,\u)$ is a global minimizer for the problem $$\begin{array}{l} (\tilde{P})\colon\;  \textnormal{Minimize}\;g(x(0),x(T))+\tilde{K}\int_0^T L(t,x(t))\sp dt\vspace{0.1cm}\\ \hspace{0.9cm}  \textnormal{over}\;(x,u)\;\textnormal{such that}\;u(\cdot)\in \mathscr{U}\;\textnormal{and}\\[2pt] \hspace{0.9cm} \begin{cases} (\tilde{D}) \begin{sqcases}\dot{x}(t)\in \tilde{f}(t,x(t),u(t))-\partial \varphi(x(t)),\;\;\hbox{a.e.}\;t\in[0,T],\\x(0)\in C_0\cap \bar{B}_{\delta_{\bbbp o}}(\x(0)), \end{sqcases}\vspace{0.1cm}\\x(T)\in C_T\cap \bar{B}_{\delta_{\bbbp o}}(\x(T)), \end{cases}
    \end{array}$$ 
    where $\tilde{K}:=\tfrac{512\;\bar{M}_{\ell} M_g}{5\;\delta^3}$, $\bar{M}_{\ell}:= 2{M}_{\ell}+K$ and $M_g:=\max\limits_{\scaleto{\tilde{C}_0(\delta)\times \tilde{C}_T(\delta)}{6.5pt}} g(x,y)$. \end{lemma}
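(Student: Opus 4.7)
The strategy mirrors that of \cite[Theorem 6.2]{VCpaper}: I aim to show that $(\bar x,\bar u)$ is admissible for $(\tilde P)$ with augmented cost exactly $g(\bar x(0),\bar x(T))$, and that every competing admissible pair has augmented cost at least $g(\bar x(0),\bar x(T))$. The first assertion is immediate: the localized endpoint constraints are trivially satisfied by $\bar x$; since $\bar x(t)\in C\cap \bar B_\delta(\bar x(t))$ for every $t\in [0,T]$, Lemma \ref{fextension} gives $\tilde f(t,\bar x(t),\bar u(t))=f(t,\bar x(t),\bar u(t))$ a.e., so $(\bar x,\bar u)$ satisfies $(\tilde D)$; and $L(\cdot,\bar x(\cdot))\equiv 0$ by definition \eqref{Ltx}, which makes the augmented cost at $(\bar x,\bar u)$ equal to $g(\bar x(0),\bar x(T))$.

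For a generic admissible $(x,u)$ of $(\tilde P)$, I split on whether the trajectory remains in the $\delta$-tube around $\bar x$. If $\|x-\bar x\|_\infty\le \delta$, then $x(t)\in C\cap \bar B_\delta(\bar x(t))$ for all $t$, Lemma \ref{fextension} yields $\tilde f=f$ along $(x,u)$, so $(x,u)$ solves $(D)$; the localized endpoint conditions of $(\tilde P)$ force $x(0)\in C_0$ and $x(T)\in C_T$, making $(x,u)$ admissible for $(P)$, so the strong local minimality of $(\bar x,\bar u)$ yields $g(\bar x(0),\bar x(T))\le g(x(0),x(T))$, and since $L\ge 0$ we are done. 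Otherwise $\|x-\bar x\|_\infty>\delta$, and I pick $t^*\in [0,T]$ with $\|x(t^*)-\bar x(t^*)\|>\delta$. Using the multiplier representation of Remark \ref{Dforms} for both $x$ (under $(\tilde D)$) and $\bar x$ (under $(D)$), together with the data bounds $\|\tilde f\|\le \bar M_\ell$, $\|\nabla\psi_i\|\le \bar M_\psi$ on $C$, and the a priori $L^\infty$ bound $\|\xi^i\|_\infty\le \bar M_\ell/(2\eta)$ from \eqref{xibounded}, I obtain a Lipschitz constant $K'$ for each of $x$ and $\bar x$ depending only on the data of $(P)$. Since $\|x(0)-\bar x(0)\|,\|x(T)-\bar x(T)\|\le \delta_o\le \delta/2$, a direct bounded-derivative estimate confines $t^*$ to the interior of $[0,T]$ and forces $\|x(t)-\bar x(t)\|>3\delta/4$ on a subinterval of $[0,T]$ of length at least $\delta/(4K')$; on this interval one has $L(t,x(t))>5\delta^2/16$, whence
\[
\int_0^T L(t,x(t))\,dt\ \ge\ \frac{5\delta^3}{64\,K'}.
\]
The prescribed constant $\tilde K=512\bar M_\ell M_g/(5\delta^3)$ is calibrated precisely so that $\tilde K$ times this lower bound exceeds $2M_g\ge g(\bar x(0),\bar x(T))-g(x(0),x(T))$, which closes the case.

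The main technical obstacle is obtaining the uniform Lipschitz bound $K'$ on $\dot x$ and $\dot{\bar x}$: since $\partial\varphi$ can be unbounded across $\bdry C$, the raw subdifferential form of $(\tilde D)$ does not yield such a bound directly. I circumvent this by replacing $\partial\varphi$ with the multiplier representation of Remark \ref{Dforms}, invoking the a priori bound $\|\xi^i\|_\infty\le \bar M_\ell/(2\eta)$ from Theorem \ref{DgktoD} to produce a rate independent of the particular admissible pair $(x,u)$. Once $K'$ is in hand, the remaining admissibility verification, the application of strong local minimality, and the arithmetic fixing $\tilde K$ are routine and essentially follow the blueprint of \cite[Theorem 6.2]{VCpaper}, where the same localization-with-integral-penalty trick was carried out in the setting $r=1$.
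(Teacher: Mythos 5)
Your proposal is correct and reproduces the exact-penalization technique that the paper itself defers to \cite[Proof of Theorem 6.2]{VCpaper} rather than writing out. The two-case split (trajectories staying in the $\delta$-tube vs.\ those leaving it), the reduction of the first case to strong local minimality via Lemma \ref{fextension}, and the integral lower bound in the second case obtained from the uniform velocity bound on solutions of $(\tilde D)$ are all the standard ingredients and are arranged as in the cited source. Two small remarks. First, you do not need to pass through the multiplier representation of Remark \ref{Dforms} with the $\|\xi^i\|_\infty$ bound to extract the Lipschitz constant $K'$: the prox-regularity of $C$ and $\dot x(t)\in T_C(x(t))$ a.e.\ give directly $\|\dot x(t)\|\le\|\tilde f_{\Vphi}(t,x(t),u(t))\|\le\bar M_\ell$ (equivalently \eqref{eqVera2}), so $K'\le 2\bar M_\ell$ and your arithmetic closes with room to spare ($\tilde K\cdot 5\delta^3/(64K')\ge 4M_g$). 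Second, the final comparison $g(\x(0),\x(T))-g(x(0),x(T))\le 2M_g$ implicitly uses that $g$ is normalized so that $M_g$ also dominates $-\min g$ on $\tilde C_0(\delta)\times\tilde C_T(\delta)$ (e.g.\ after the standard shift making $g\ge 0$ there); worth flagging, but it is the same normalization used in \cite{VCpaper} and does not affect the validity of the argument.
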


Now we are ready to state our approximation result which follows using arguments similar to those used in the proof of \cite[Proposition 6.2]{VCpaper}.

\begin{proposition} [Approximating problems for $(P)$]\label{prop3old} Assume that \textnormal{(A1)-(A2.2)},  \textnormal{(A2.5)-(A4)} hold, $g$ is continuous on $\tilde{C}_0(\delta)\times\tilde{C}_T(\delta)$ for some $\tilde{\rho}>0$, and $f(t,x,U(t))$ is convex for all $x\in C$ and  $t\in [0,T]$ \textnormal{a.e.}  Let  $\tilde{f}$ be the extension of $f$ in \textnormal{Lemma \ref{fextension}},  and $\tilde{K}$ be the constant in  \textnormal{Lemma \ref{localtoglobal}}. Then,  for every $\a> 0$ and $\beta\in (0,1]$, there exist a subsequence of $(\gk)_k$, we do not relabel, and a sequence $(c_{\gk}, e_{\gk}, u_{\gk})_k$ in $ C_0^{\gk}(k)\times C_T(k) \times\mathscr{U}$ such that,
for\sp\sp$:$
 \begin{equation*}\label{fbeta} \tilde{f}^\beta (t,x,u):=(1-\b) \tilde{f}(t,x,\u(t))+\b \tilde{f}(t,x,u),\;\forall t\in[0,T]\;\textnormal{a.e.},\;\forall (x,u)\in \R^n\times U,\end{equation*} 
  \begin{equation*}\label{fbetaphi} \tilde{f}_\Vphi^\b(t,x,u):= \tilde{f}^\b (t,x,u)-\nabla \Vphi(x),\;\forall t\in[0,T]\;\textnormal{a.e.},\;\forall (x,u)\in \R^n\times U,\end{equation*}
\begin{equation*}
J(x,u):= g(x(0),x(T))+\tilde{K}\int_0^T L(t,x(t))\sp dt+\a\left(\|u(t)-u_{\gk}(t)\|_1+\|x(0)-c_{\gk}\|+\|x(T)-e_{\gk}\|\right),\end{equation*}
and \begin{equation*}\label{dgkba} \begin{array}{l} (\tilde{P}_{\gk}^{\ab})\colon\; \textnormal{Minimize}\;J(x,u)\\ \hspace{1.15cm} \textnormal{over}\;(x,u)\;\textnormal{such that}\;u(\cdot)\in \mathscr{U}\;\textnormal{and}\\[1pt] \hspace{1.11cm} \begin{cases} (\tilde{D}^\b_{\gk})\begin{sqcases} \dot{x}(t)=\tilde{f}^\b_\Vphi(t,x(t),u(t))-\sum\limits_{i=1}^{r}\gk e^{\gk\psi_i(x(t))} \nabla\psi_i(x(t)),\;\;\textnormal{a.e.}\; t\in[0,T],\\x(0)\in C_0^{\gk}(k),\end{sqcases} \vspace{0.1cm}\\ x(T)\in C_T(k),\end{cases}
 \end{array}\end{equation*}
 the pair  $(x_{\gk},u_{\gk})$ that solves $(\tilde{D}^\b_{\gk})$ for $x_{\gk}(0)=c_{\gk}$, is optimal for  $(\tilde{P}_{\gk}^{\ab})$ and satisfies $x_{\gk}(T)=e_{\gk}$. Moreover, we have $$u_{\gk}\xrightarrow[L^1]{\textnormal{strongly}}\u, \;\;\;x_{\gk}\xrightarrow[]{\textnormal{uniformly}}\x,$$
and all the conclusions of \textnormal{Theorem \ref{DgktoD(k)}}  are valid, including $(x_{\gk})_k$ is uniformly Lipschitz and $x_{\gk}(t)\in C^{\gk}(k)\subset \inte C^{\gk}\subset \inte C$ for all $t\in[0,T]$. Furthermore, for all $k$ sufficiently large, we have $x_{\gk}(0)\in (C_0+\tilde{\rho}{B})\cap \inte C$ and $x_{\gk}(T)\in (C_T+\tilde{\rho}{B})\cap \inte C$.
\end{proposition}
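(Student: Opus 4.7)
The plan is to construct $(c_{\gk},e_{\gk},u_{\gk})_k$ by applying Ekeland's variational principle to the penalty-free intermediate problem $(\tilde{P}^{\b}_{\gk})$ that minimizes
\[
 J_0(x,u):=g(x(0),x(T))+\tilde{K}\int_0^T L(t,x(t))\sp dt
\]
subject to the dynamic $(\tilde{D}^\b_{\gk})$ with $x(0)\in C_0^{\gk}(k)$ and $x(T)\in C_T(k)$. I parametrize admissible pairs by couples $(c,u)\in C_0^{\gk}(k)\times\mathscr{U}$ with $x_{c,u}(T)\in C_T(k)$, where $x_{c,u}$ denotes the unique solution of $(\tilde{D}^\b_{\gk})$ starting at $c$ with control $u$, and equip the set with the metric
\[
 d\bigl((c,u),(c',u')\bigr):=\|c-c'\|+\|u-u'\|_1+\|x_{c,u}(T)-x_{c',u'}(T)\|.
\]
Completeness of the $d$-space follows from the $L^1$-closedness of $\mathscr{U}$, the compactness of $C_0^{\gk}(k)$, the closedness of $C_T(k)$, and Gronwall continuous dependence on data for $(\tilde{D}^\b_{\gk})$. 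The reduced cost $F_k(c,u):=J_0(x_{c,u},u)$ is continuous under $d$, and for $k$ large the reference pair $(\bar{c}_{\gk},\u)$ is admissible: since $\tilde{f}^\b(\cdot,\cdot,\u)=\tilde{f}(\cdot,\cdot,\u)$ agrees with $f(\cdot,\cdot,\u)$ on $\bar{B}_{\delta}(\x(t))$, the trajectory $x_{\bar{c}_{\gk},\u}$ coincides with the $\x_{\gk}$ of Corollary~\ref{coroDgktoD(k)}, while $\bar{c}_{\gk}\in C_0^{\gk}(k)$ and $\x_{\gk}(T)\in C_T(k)$ hold by construction.

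The crux is to establish $\e_k:=F_k(\bar{c}_{\gk},\u)-\inf F_k\searrow 0^+$. The upper bound $F_k(\bar{c}_{\gk},\u)\to J_0(\x,\u)$ is immediate from the uniform convergence $\x_{\gk}\to\x$ provided by Corollary~\ref{coroDgktoD(k)} together with the continuity of $g$ and of the integral. For the lower bound $\liminf_k \inf F_k\geq J_0(\x,\u)$, I would select a minimizing sequence for $F_k$, apply Theorem~\ref{DgktoD}(I) and Remark~\ref{newproof} (exploiting the convexity of $f(t,x,U(t))$ from (A6) and Filippov selection) to extract a uniformly convergent subsequence whose limit solves $(D)$ with endpoints in $C_0\times C_T$, and invoke the global minimality of $(\x,\u)$ for $(\tilde{P})$ provided by Lemma~\ref{localtoglobal}. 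With $\e_k\to 0$ in hand and the given $\alpha>0$, Ekeland's principle applied with $\lambda_k:=\e_k/\alpha$ supplies $(c_{\gk},u_{\gk})$ with $d\bigl((c_{\gk},u_{\gk}),(\bar{c}_{\gk},\u)\bigr)\leq\lambda_k$ that strictly minimizes $F_k+\alpha\,d\bigl(\cdot,(c_{\gk},u_{\gk})\bigr)$. Setting $x_{\gk}:=x_{c_{\gk},u_{\gk}}$ and $e_{\gk}:=x_{\gk}(T)$, the expansion
\[
 F_k(c,u)+\alpha\,d\bigl((c,u),(c_{\gk},u_{\gk})\bigr)=J_0(x_{c,u},u)+\alpha\bigl(\|c-c_{\gk}\|+\|u-u_{\gk}\|_1+\|x_{c,u}(T)-e_{\gk}\|\bigr)
\]
identifies $(x_{\gk},u_{\gk})$ as the unique optimal solution of $(\tilde{P}^{\ab}_{\gk})$, with $x_{\gk}(T)=e_{\gk}$ automatic.

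The convergences $u_{\gk}\to\u$ in $L^1$, $c_{\gk}\to\x(0)$, and $e_{\gk}\to\x(T)$ follow from $\lambda_k\to 0$ combined with Corollary~\ref{coroDgktoD(k)}. Extracting a further subsequence along which $u_{\gk}\to\u$ almost everywhere, Theorem~\ref{DgktoD}(I)(iii) together with uniqueness of solutions to $(D)$ yields $x_{\gk}\to\x$ uniformly. The structural conclusions inherited from Theorem~\ref{DgktoD(k)}---uniform Lipschitz bound, invariance $x_{\gk}(t)\in C^{\gk}(k)\subset\inte C^{\gk}\subset\inte C$, and the bounds on $\xi_{\gk}^i$---apply verbatim to $(\tilde{D}^\b_{\gk})$, since $\tilde{f}^\b$ inherits (A1)$_\G$ with the same constant $M$ as $\tilde{f}$ by convexity, and the choice of $\delta_{\bbbp o}$ combined with Proposition~\ref{propcgk(k)}(iii) and Remark~\ref{intelemma(k)} ensures $C_0^{\gk}(k)\subset\inte C^{\gk}(k)$. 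Finally, the inclusions $x_{\gk}(0)\in (C_0+\tilde{\rho}\bar{B})\cap\inte C$ and $x_{\gk}(T)\in (C_T+\tilde{\rho}\bar{B})\cap\inte C$ hold for $k$ large, since $c_{\gk}\in C_0+(\sigma_k+\lambda_k)\bar{B}$ and $e_{\gk}\in C_T+\bigl(\|\x_{\gk}(T)-\x(T)\|+\lambda_k\bigr)\bar{B}$, with all error terms tending to zero. The principal technical hurdle is the lower-bound estimate $\liminf_k \inf F_k\geq J_0(\x,\u)$: identifying the uniform limit of a minimizing sequence of $(\tilde{P}^{\b}_{\gk})$ as an actual solution of the full sweeping process $(D)$ is delicate because the multiplier sequence $(\xi_{\gk}^i)_k$ is only weakly compact in $L^2$, forcing reliance on the Filippov selection argument of Remark~\ref{newproof} and on the global minimality supplied by Lemma~\ref{localtoglobal}.
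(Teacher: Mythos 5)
Your proposal correctly reconstructs the Ekeland variational-principle argument that the paper delegates to \cite[Proposition 6.2]{VCpaper}: parametrize admissible pairs by $(c,u)$ with a metric matching the penalty terms in $J$, show $\varepsilon_k:=F_k(\bar c_{\gk},\u)-\inf F_k\to 0$ by combining the global minimality of $(\x,\u)$ for $(\tilde P)$ (Lemma~\ref{localtoglobal}) with the compactness/convergence machinery of Theorem~\ref{DgktoD} and Remark~\ref{newproof}, and then apply Ekeland with $\lambda_k=\varepsilon_k/\alpha$ to produce $(c_{\gk},e_{\gk},u_{\gk})$. This is essentially the same route the paper intends, and the remaining steps (identifying the limit with $\x$, invariance via Theorem~\ref{DgktoD(k)}, and the endpoint inclusions) are handled as in the cited proof.
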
 

The next proposition is obtained as a direct application of the {\it nonsmooth} Pontryagin maximum principle (e.g., \cite[Theorem 6.2.1]{vinter}) to each member of the family of approximating problems $(\tilde{P}_{\gk}^{\ab})$, obtained in Proposition \ref{prop3old}. Note that the function $\tilde{f}$ is replaced  by $f$ in the statement of the proposition,  since we have the following:
\begin{itemize}
\item $\tilde{f}(t,x,u)=f(t,x,u)$, for $t\in [0,T]$ a.e. and for all $(x, u)\in (C\cap \bar{B}_{\delta}(\x(t)))\times U(t)$.
\item The sequence  $(x_{\gk})_k$ of Proposition \ref{prop3old}  belongs to $\inte C$ and converges uniformly to $\x$. Hence, for $k$ large enough, $x_{\gk}(t)\in(\inte C)\cap B_{\delta_{\bbbp o}}(\x(t))\subset \inte (C\cap \bar{B}_{\delta}(\x(t))) $ for all $t\in[0,T]$.
\end{itemize}

\begin{proposition}[Maximum Principle for the approximating problems] \label{prop4old}  Let $\alpha >0$ and $\b\in(0,1]$ be fixed. Assume that {\textnormal{(A1)-(A2.2)}} and \textnormal{(A2.5)-(A6)} hold. Let  $(x_{\gk},u_{\gk})$ be the sequence in {\textnormal{Proposition \ref{prop3old}}} that is optimal for $(\tilde{P}_{\gk}^{\ab})$ with $x_{\gk}$ converging uniformly to $x_{\gk}$ and $u_{\gk}$ converging strongly in $L^1$ to $\u$. Then, for $k\in\N$, there exist $p_{\gk}\in W^{1,1}([0,T];\R^n)$ and $\l_{\gk}\geq 0$ such that\sp$:$ \vspace{0.1cm}
\begin{enumerate}[$(i)$]
\item {\bf (The nontriviality condition)} For all $k\in\N$, we have \begin{equation*}\label{ntlk} \|p_{\gk}(T)\| + \l_{\gk}=1;
\end{equation*}
\item {\bf (The adjoint equation)} For $t\in [0,T]$ \textnormal{a.e.,} \begin{eqnarray} \label{adjapp} \dot{p}_{\gk}(t)\in &-&(1-\beta)(\partial^{\sp x} {f}(t,x_{\gk}(t),\u(t)))\tran p_{\gk}(t)-\b(\partial^{\sp x} {f}(t,x_{\gk}(t),u_{\gk}(t)))\tran p_{\gk}(t)\\ \nonumber&+&\partial^2\Vphi(x_{\gk}(t))p_{\gk}(t)+\sum_{i=1}^{r} \gk e^{\gk \psi_i(x_{\gk}(t))}\partial^2\psi_i(x_{\gk}(t))p_{\gk}(t)\\ &+&  \nonumber \label{mp4} \sum_{i=1}^{r} \gk^2 e^{\gk \psi_i(x_{\gk}(t))}\nabla\psi_i(x_{\gk}(t))\<\nabla\psi_i(x_{\gk}(t)),p_{\gk}(t)\> + \l_{\gk}\partial^{\sp x}L(t, x_{\gk}(t)); \nonumber\end{eqnarray}
\item {\bf (The transversality equation)} 
\begin{equation*}\label{mp5bis} \bp\bp (p_{\gk}(0),-p_{\gk}(T))\bp \in\bp \l_{\gk}\partial^L g(x_{\gk}(0),x_{\gk}(T)) +\big[\big(\a\bar{B}+N_{C_0^{\gk}(k)}^L(x_{\gk}(0))\big)\times \big(\a\bar{B}+  N_{C_T(k)}^L(x_{\gk}(T))\big)\big]; \end{equation*}
\item {\bf (The maximization condition)} \begin{equation*}\label{maxapp} \max_{u\in U}\left\{\<{f}^{\b}_\Vphi(t,x_{\gk}(t),u),p_{\gk}(t)\>-\frac{\a\l_{\gk}}{\b}\|u-u_{\gk}(t)\|\right\}\;\hbox{is attained at}\;u_{\gk}(t)\;\,\hbox{for}\;\,t\in [0,T]\;\,\textnormal{a.e.}\end{equation*}
\end{enumerate}
Furthermore, if $C_T=\R^n$, then $\l_{\gk}\not=0$ and is taken to be $1$, and the nontriviality condition $(i)$ is eliminated.
\end{proposition}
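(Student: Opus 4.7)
The plan is to apply the nonsmooth Pontryagin maximum principle for standard fixed-endtime Bolza problems without state constraints, namely \cite[Theorem 6.2.1]{vinter}, to each approximating problem $(\tilde{P}_{\gk}^{\ab})$ produced by Proposition \ref{prop3old}, evaluated at its optimal pair $(x_{\gk}, u_{\gk})$. Because $(\tilde{P}_{\gk}^{\ab})$ \emph{already} is a standard optimal control problem (the sweeping term has been replaced by the smooth exponential penalty, the state constraint $x(t)\in C$ is absorbed by the invariance of $C^{\gk}$ established via Theorem \ref{DgktoD(k)}, and the local endpoint constraints are encoded in the closed sets $C_0^{\gk}(k)$ and $C_T(k)$), the proof reduces to (a) checking the hypotheses of Vinter's theorem, and (b) unpacking its conclusions by standard nonsmooth calculus.

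First I would check the hypotheses. The velocity in $(\tilde{D}_{\gk}^\b)$ is the sum of $\tilde{f}^\b_\Vphi(t,x,u)$ and of the $u$-independent penalty $-\sum_{i=1}^r \gk e^{\gk \psi_i(x)}\nabla\psi_i(x)$. By Lemma \ref{fextension} and Proposition \ref{prop1}$(ii)$ the first summand is measurable in $t$, continuous in $(x,u)$, uniformly Lipschitz and bounded in $x$ on $C\times U(t)$; by Proposition \ref{prop3old} the trajectories $x_{\gk}$ are uniformly Lipschitz and stay inside a compact subset of $\inte C\subset C+\rho B$, where each $\psi_i\in\CO^{1,1}$, so the second summand is Lipschitz in $x$ and measurable in $t$ on a tube around the graph of $x_{\gk}$. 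The endpoint sets $C_0^{\gk}(k)$ and $C_T(k)$ are closed, the terminal cost $g+\a\|\cdot-c_{\gk}\|+\a\|\cdot-e_{\gk}\|$ is Lipschitz by (A5), and both $L(t,\cdot)$ and $u\mapsto \a\|u-u_{\gk}(t)\|$ are Lipschitz uniformly in $t$. Thus all hypotheses of \cite[Theorem 6.2.1]{vinter} are met.

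Then I would unpack the conclusions. The existence of $p_{\gk}\in W^{1,1}$ and $\l_{\gk}\ge 0$ together with $\|p_{\gk}(T)\|+\l_{\gk}=1$ yields (i). For (ii), I compute the Clarke generalized Jacobian in $x$ of the velocity by a sum/product/chain rule. Using $\tilde{f}=f$ on the relevant tube (by Lemma \ref{fextension}, so $\partial^x\tilde{f}(t,x_{\gk}(t),\cdot)=\partial^x f(t,x_{\gk}(t),\cdot)$), the $\tilde{f}^\b_\Vphi$-piece contributes $-(1-\b)(\partial^x f(t,x_{\gk}(t),\u(t)))\tran p_{\gk}(t) -\b(\partial^x f(t,x_{\gk}(t),u_{\gk}(t)))\tran p_{\gk}(t) +\partial^2\Vphi(x_{\gk}(t))p_{\gk}(t)$, while for each $i\in\{1,\dots,r\}$ the product rule applied to $\gk e^{\gk\psi_i(x)}\nabla\psi_i(x)$ produces
\[
\gk^2 e^{\gk\psi_i(x_{\gk}(t))}\nabla\psi_i(x_{\gk}(t))\<\nabla\psi_i(x_{\gk}(t)),p_{\gk}(t)\>+\gk e^{\gk\psi_i(x_{\gk}(t))}\partial^2\psi_i(x_{\gk}(t))p_{\gk}(t),
\]
from differentiating, respectively, the scalar factor $\gk e^{\gk\psi_i(x)}$ (via the chain rule through $\nabla\psi_i$) and the vector factor $\nabla\psi_i$. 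The integral cost $\tilde{K}\int_0^T L(t,x(t))\,dt$ adds $\l_{\gk}\partial^x L(t,x_{\gk}(t))$, completing (ii). For (iii), the limiting subgradient of the endpoint cost at $(x_{\gk}(0),x_{\gk}(T))$ lies in $\l_{\gk}\partial^L g(x_{\gk}(0),x_{\gk}(T))+\a\bar{B}\times\a\bar{B}$, because $\partial\|\cdot-c_{\gk}\|$ and $\partial\|\cdot-e_{\gk}\|$ are contained in $\bar{B}$, and the natural transversality then adds $N^L_{C_0^{\gk}(k)}(x_{\gk}(0))\times N^L_{C_T(k)}(x_{\gk}(T))$. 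The maximization condition (iv) is the Hamiltonian form of the PMP: the only $u$-dependent terms are $\b\<\tilde{f}(t,x_{\gk}(t),u),p_{\gk}(t)\>$ (from $\tilde{f}^\b_\Vphi$) and $-\l_{\gk}\a\|u-u_{\gk}(t)\|$ (from the running cost), and dividing by $\b>0$ yields the displayed rescaled maximizer.

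Finally, when $C_T=\R^n$, the set $C_T(k)=\bar{B}_{\delta_{\bbbp o}}(x_{\gk}(T))\cap C$ has $x_{\gk}(T)=e_{\gk}$ in the interior of the ball and, by Proposition \ref{prop3old}, inside $\inte C$, so $N^L_{C_T(k)}(x_{\gk}(T))=\{0\}$; the problem becomes effectively a free-endpoint Bolza problem, for which the standard argument that a vanishing multiplier forces $p_{\gk}\equiv 0$ against nontriviality gives $\l_{\gk}>0$, permitting the normalization $\l_{\gk}=1$ and eliminating (i). The main (mild) obstacle anticipated is the careful bookkeeping of the product/chain rule for each of the $r$ penalty terms in (ii); since $x_{\gk}(t)\in\inte C\subset C+\rho B$ and each $\psi_i\in\CO^{1,1}(C+\rho B)$, the Clarke generalized Hessian $\partial^2\psi_i(x_{\gk}(t))$ is the genuine Clarke object and the smooth product rule combines with the limiting chain rule without further subtlety.
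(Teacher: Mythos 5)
Your proposal is correct and takes essentially the same route as the paper: the paper likewise obtains this proposition as a direct application of Vinter's nonsmooth maximum principle \cite[Theorem 6.2.1]{vinter} to each $(\tilde{P}_{\gk}^{\ab})$, replacing $\tilde{f}$ by $f$ because $x_{\gk}(t)\in\inte\big(C\cap\bar{B}_{\delta}(\x(t))\big)$ for $k$ large, and then unpacking the sum/product/chain rules for the $r$ penalty terms, the endpoint perturbations, and the running cost exactly as you describe. Only a cosmetic slip: when $C_T=\R^n$ the set $C_T(k)$ is the ball of radius $\delta_{\bbbp o}$ centered at $\x_{\gk}(T)$ rather than at $x_{\gk}(T)$, which does not affect your normality argument since both points converge to $\x(T)$ and $x_{\gk}(T)\in\inte C$.
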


\section{Proof of Theorem \ref{thm1mpcomp}} \label{proofthm1mp} We first prove the theorem under the temporary additional assumption \enquote{$C$ is compact} which is {\it stronger} than (A2.4), see Remark \ref{assumptionH}. The removal of this additional assumption will constitute the final step in the proof. 

Let $(\x,\u)$ be a strong local minimizer for $(P)$. For each fixed $(\a, \b)\in (0,1]^2:=(0,1]\times (0,1]$, Propositions  \ref{prop3old} and \ref{prop4old} produce a subsequence of $(\gk)_k$, we do not relabel, and corresponding sequences  $x_{\gk}$, $u_{\gk}$, $p_{\gk}$ and $\l_{\gk}$ such that: 
\begin{itemize}
\item For each $k$, the admissible pair $(x_{\gk},u_{\gk})$ is optimal for $(\tilde{P}_{\gk}^{\ab})$.
\item $u_{\gk}$ converges strongly in $L^1$ to $\u$, and  $u_{\gk}(t) \f \u(t)$ a.e.
\item $x_{\gk}$ converges uniformly to $\x$.
\item  All the conclusions of \textnormal{Theorem \ref{DgktoD(k)}}  are valid, including $(x_{\gk})_k$ is uniformly Lipschitz and $x_{\gk}(t)\in\inte C$ for all $t\in[0,T]$.
\item For all $k$, $x_{\gk}(0)\in (C_0+\tilde{\rho}{B})\cap \inte C$ and $x_{\gk}(T)\in (C_T+\tilde{\rho}{B})\cap \inte C$.
\item Equations  \ref{ntlk}--\ref{maxapp} of Proposition \ref{prop4old} are satisfied.
\end{itemize}
Since $x_{\gk}(t)\in\inte C$ for all $t\in[0,T]$, we have,  for $t\in [0,T]$ a.e.,  
$$\partial^{\sp x} f(t,x_{\gk}(t),u_{\gk}(t))=\partial^{\sp x}_\ell f(t,x_{\gk}(t),u_{\gk}(t)),\;\;\partial^2\Vphi(x_{\gk}(t))=\partial^2_\ell\varphi(x_{\gk}(t)),$$
$$\partial^{\sp x} f(t,x_{\gk}(t),\u(t))=\partial^{\sp x}_\ell f(t,x_{\gk}(t),\u(t)),$$
$$\partial^2\psi_i(x_{\gk}(t))= \partial^2_\ell\psi_i(x_{\gk}(t))\;\;\hbox{for}\;\;i=1,\dots,r, $$
\begin{equation*}\label{dllg}\hbox{and}\;\;\partial^{\sp L} g(x_{\gk}(0),x_{\gk}(T))=\partial^{\sp L}_{\ell} g(x_{\gk}(0),x_{\gk}(T)).\vspace{0.1 cm}\end{equation*}
Hence from (\ref{adjapp}), it results the existence of sequences $\zeta_{\gk},\, \hzeta_{\gk},\,\theta_{\gk}$ and $(\vartheta^i_{\gk})_{i=1}^r$ in $\mathscr{M}_n([0,T])$, and $\omega_{\gk}\colon[0,T]\f\R^n$
such that, for a.e. $t\in [0,T],$
\begin{equation*} (\hzeta_{\gk}(t),\zeta_{\gk}(t),\theta_{\gk}(t))\in \partial^{\sp x}_\ell f(t,x_{\gk}(t),u_{\gk}(t))\times \partial^{\sp x}_\ell f(t,x_{\gk}(t),\u(t))\times  \partial^2_\ell\varphi(x_{\gk}(t)), \end{equation*}
\begin{equation*} \vartheta^i_{\gk}(t))\in \partial^2_\ell\psi_i(x_{\gk}(t))\;\;\hbox{for}\;\;i=1,\dots,r,\end{equation*}
\begin{equation}\label{omegabounded}\omega_{\gk}(t)\in \partial^{\sp x}L(t,x_{\gk}(t))= \conv\{2(x_{\gk}(t)-\x(t)), 0\} =[0, 2(x_{\gk}(t)-\x(t))], \end{equation}
\begin{eqnarray}\label{mpim1} \hbox{and}&&\bp\bp\bp\dot{p}_{\gk}(t)= - (1-\beta)\zeta_{\gk}(t)\tran p_{\gk}(t)- \beta\sp \hzeta_{\gk}(t)\tran p_{\gk}(t)+ \theta_{\gk}(t)p_{\gk}(t)\\[3pt]&+&\nonumber  \sum_{i=1}^r\gk e^{\gk \psi_i(x_{\gk}(t))}\vartheta^i_{\gk}(t)p_{\gk}(t)\\ \nonumber &+& \sum_{i=1}^r \gk^2 e^{\gk \psi_i(x_{\gk}(t))}\nabla\psi_i(x_{\gk}(t))\<\nabla\psi_i(x_{\gk}(t)),p_{\gk}(t)\>+\l_{\gk}\omega_{\gk}(t)\nonumber.\end{eqnarray} 
Note that for each $k$, the functions $p_{\gk}, \dot{p}_{\gk}, x_{\gk}$, $u_{\gk}$ and $\u$ are measurable on $[0,T]$, and, by  \cite[Exercise 13.24]{clarkebook} and (A1), (A2.1), and (A3), the multifunctions  
$\partial_{\ell}^{\sp x} f(\cdot,x_{\gk}(\cdot),u_{\gk}(\cdot))$, $\partial_{\ell}^{\sp x} f(\cdot,x_{\gk}(\cdot),\u(\cdot))$, $\partial_{\ell}^2\varphi(x_{\gk}(\cdot))$, $(\partial_{\ell}^2\psi_i(x_{\gk}(\cdot)))_{i=1}^r$, and $\partial^xL(\cdot,x_{\gk}(\cdot))$ are measurable on $[0,T]$.  Hence,  the Filippov measurable selection theorem (see \cite[Theorem 2.3.13]{vinter}) yields that we can assume the measurability of the functions $\zeta_{\gk}(\cdot)$, $\hzeta_{\gk}(\cdot)$, $\theta_{\gk}(\cdot)$,  $(\vartheta_{\gk}^i(\cdot))_{i=1}^r$, and $\omega_{\gk}(\cdot)$. Moreover,  these sequences are uniformly bounded in $L^{\infty}$, as  $\|\zeta_{\gk}\|_\infty \leq {M_\ell}$, $\|\hzeta_{\gk}\|_\infty \leq {M_\ell}$, $\|\theta_{\gk}\|_\infty \leq K$, $\|\vartheta^i_{\gk}\|_\infty\leq 2M_{\psi}$ for $i=1,\dots,r$, and \begin{equation}\label{ML} \|\omega_{\gk}\|_\infty\leq \overbrace{2\|x_{\gk}-\x\|_\infty}^{d_{\gk}}\leq M_L,\end{equation} for some $M_L>0$, which exists from \eqref{omegabounded} and the uniform convergence of the sequence $x_{\gk}$ to $\x$.

The  major difference between the proof of this theorem and that of \cite[Theorem 6.2]{VCpaper} lies in the construction of the adjoint vector $p$ (for fixed $(\a,\beta)\in (0,1]^2$). More specifically, the difference is manifested below in the intricacy to prove that in our setting, where  $r>1$,   the sequence $(p_{\gk})_{k}$  enjoys the same properties  obtained there for  $r=1$,  namely,   $(p_{\gk})_{k}$ is uniformly bounded and has uniform bounded variation. Once we establish these facts for our general  setting,    the construction of the remaining items $\nu^i$ (for each $i$), $\zeta$, $\theta$, $\vartheta_i$ (for each $i$) and $\l$, follows arguments similar to those in \cite[Proof of Theorem 6.2]{VCpaper} when constructing therein  $\nu$, $\xi$, $\zeta$, $\theta$, $\vartheta$ and $\l$, respectively.

We fix $(\a,\beta)\in (0,1]^2$. Using (\ref{mpim1}), we obtain 
\begin{eqnarray*}&&\bp\bp\bp \frac{1}{2}\frac{d}{dt}\|p_{\gk}(t)\|^2=\\ &&\left\<\left(-(1-\b)\zeta_{\gk}(t)\tran- \beta\sp \hzeta_{\gk}(t)\tran+\theta_{\gk}(t)+\sum_{i=1}^r\gk e^{\gk \psi_i(x_{\gk}(t))}\vartheta^i_{\gk}(t)\right)p_{\gk}(t),p_{\gk}(t)\right\> \nonumber \\ &+&\sum_{i=1}^r \gk^2 e^{\gk \psi_i(x_{\gk}(t))}\left|\left\<\nabla\psi_i(x_{\gk}(t)),p_{\gk}(t)\right\>\right|^2+\l_{\gk}\<\omega_{\gk},p_{\gk}\> \\&\geq& \left\<\left(-(1-\b)\zeta_{\gk}(t)\tran- \beta\sp \hzeta_{\gk}(t)\tran+\theta_{\gk}(t)+\sum_{i=1}^r\gk e^{\gk \psi_i(x_{\gk}(t))}\vartheta^i_{\gk}(t)\right)p_{\gk}(t),p_{\gk}(t)\right\>\\&+& \l_{\gk}\<\omega_{\gk},p_{\gk}\>\\ &\geq&  -\left({M}_\ell+K+2M_{\psi}\sum_{i=1}^r\gk e^{\gk \psi_i(x_{\gk}(t))}\right)\|p_{\gk}(t)\|^2 - 2\l_{\gk}\|x_{\gk}-\x\|_{\infty} \|p_{\gk}(t)\| \\&=& -\left({M}_\ell+K+2M_{\psi}\xi_{\gk}(t)\right)\|p_{\gk}(t)\|^2 -\l_{\gk}d_{\gk}\|p_{\gk}(t)\|.\end{eqnarray*}	
Hence, using Gr\"onwall's Lemma in \cite[Lemma 4.1]{show} for $\a=\frac{1}{2}$, and the uniform boundedness of $(\|\xi_{\gk}\|^2_2)_k$ proved in Theorem \ref{DgktoD}, we obtain that 
\begin{equation}\label{mp7} \|p_{\gk}(t)\|\leq e^{(M_\ell+K+2M_{\psi}\sqrt{M_\xi})}(\|p_{\gk}(T)\|+\l_{\gk}d_{\gk})\leq (1+L_g+M_L) e^{(M_\ell+K+2M_{\psi}\sqrt{M_\xi})}:=M_1, \end{equation}
where the last inequality follows from  \eqref{ML}, and the nontriviality condition \ref{ntlk} when $C_T\neq\R^n$, and the transversality condition \ref{mp5bis} when $C_T=\R^n$. Hence, $(p_{\gk})$ is uniformly bounded by the constant $M_1$ on $[0,T].$

We proceed to prove that $(\dot{p}_{\gk})$ is uniformly bounded in $L^1$. First, we note that the method used in \cite{VCpaper}, for $r=1$, cannot be used for the version \eqref{Dgk2} of $(D_{\gk})$ since the generalized Hessian of $\psi_{\gk}$ is not bounded near $\x$, nor  for the version \eqref{Dgk1} since here $r\ge 1$. Hence, a new technique is required. From \eqref{mpim1}, we have \begin{eqnarray} \label{mp11}
&&\hspace{-0.4cm} \int_0^T \|\dot{p}_{\gk}(t)\|\sp dt  \leq  \\ &&\hspace{-0.4cm}  \nonumber  \int_0^T \Bigg\|\underbrace{\left(- (1-\beta)\zeta_{\gk}(t)\tran- \beta\sp \hzeta_{\gk}(t)\tran+\theta_{\gk}(t)+\sum_{i=1}^r\gk e^{\gk \psi_i(x_{\gk}(t))}\; \vartheta^i_{\gk}(t)\right)}_{\mathcal{B}_{\gk}(t)}p_{\gk}(t)\Bigg\| dt \\ &+& \nonumber \bp \sum_{i=1}^r \int_0^T \left\| \gk^2 e^{\gk \psi_i(x_{\gk}(t))}\nabla\psi_i(x_{\gk}(t))\; \<\nabla\psi_i(x_{\gk}(t)),p_{\gk}(t)\>\right\| dt+\l_{\gk}\int_0^T\|\omega_{\gk}(t)\|\sp dt \\ &\leq& \nonumber \underbrace{\big[T(M_\ell+K)+2TM_{\psi}\sqrt{M_{\xi}}\sp\sp \big]}_{\bar{M}_2}\|p_{\gk}\|_\infty +Td_{\gk} \\&+&  \nonumber \underbrace{\sum_{i=1}^r\int_0^T \gk^2 e^{\gk \psi_i(x_{\gk}(t))}\left\| \nabla\psi_i(x_{\gk}(t))\right\|\,\left|\<\nabla\psi_i(x_{\gk}(t)),p_{\gk}(t)\>\right|\sp dt}_{\bf I}\\\nonumber &\leq& M_2(\|p_{\gk}\|_{\infty}+d_{\gk})+{\bf I}, \end{eqnarray}
where $M_2:=\max\{\bar{M}_2,T\}$. In order to prove that {\bf I} is uniformly bounded, we write ${\bf I}= {\bf I}_1+{\bf I}_2,$ in which  $\bar{a}$  is the positive constant in Lemma \ref{A2.2lemma}:
\begin{itemize}
\item $\displaystyle {\bf I}_1:=\sum_{i=1}^r\int_{[I^{\bar{a}}(\x)]^c} \gk^2 e^{\gk \psi_i(x_{\gk}(t))}\left\| \nabla\psi_i(x_{\gk}(t))\right\|\,\left|\<\nabla\psi_i(x_{\gk}(t)),p_{\gk}(t)\>\right|\sp dt.$  
\item  $\displaystyle {\bf I}_2:=\sum_{i=1}^r\int_{I^{\bar{a}}(\x)} \gk^2 e^{\gk \psi_i(x_{\gk}(t))}\left\| \nabla\psi_i(x_{\gk}(t))\right\|\,\left|\<\nabla\psi_i(x_{\gk}(t)),p_{\gk}(t)\>\right|\sp dt$.
\end{itemize}
Since $x_{\gk}$ converges uniformly to $\x$, then,   assumption (A2.1)  and  $x_{\gk}(t)\in C$ for all $t\in [0,T]$, imply the existence of $\bar{k}\in\N$ such that for $k\geq \bar{k}$, 
\begin{equation}\label{pmpeq9} |\psi_i(x_{\gk}(t))-\psi_i(\x(t))|\leq\frac{\ba}{2}\;\;\hbox{for all}\;t\in [0,T]\;\hbox{and for}\;i=1,\dots,r.\end{equation}
Hence, using the definition of $I^{\bar{a}}(\x)$ in \eqref{Iax}, it follows that,  for $k\geq \bar{k}$,
\begin{equation}\label{importantchadi1} \psi_i(x_{\gk}(t))\leq-\frac{\bar{a}}{2}, \;\hbox{for all}\;i=1,\dots,r\;\,\hbox{and}\;\hbox{for all}\;t\in  [I^{\bar{a}}(\x)]^c.
\end{equation}
Thus, there exists $\bar{M}_3>0$ and $\bar{k}_o\geq \bar{k}$ such that for $k\geq \bar{k}_o$ and for $i=1,\dots,r$, we have 
\begin{equation*}\label{I1boundedbis}\hspace{-.1 in} \int_{[I^{\bar{a}}(\x)]^c} \gk^2 e^{\gk \psi_i(x_{\gk}(t))}\left\| \nabla\psi_i(x_{\gk}(t))\right\|\,\left|\<\nabla\psi_i(x_{\gk}(t)),p_{\gk}(t)\>\right|\sp dt \leq  \bar{M}_3\|p_{\gk}\|_{\infty}.\end{equation*} 
This yields that for $k\geq \bar{k}_o$, 
\begin{equation}\label{I1bounded} {\bf I}_1 \leq r\bar{M}_3\|p_{\gk}\|_{\infty}:=M_3\|p_{\gk}\|_{\infty}.\end{equation}
Next, the definition of $I^{\bar{a}}(\x)$ and $\I^{\ba}_{\x(t)}$ given in \eqref{Iax},  and equation \eqref{pmpeq9} yield that, for $k\geq \bar{k}_o$, 
\begin{equation}\label{pmpeq7} \psi_i(x_{\gk}(t))\leq -\frac{\ba}{2}\;\;\hbox{for all}\;t\in I^{\bar{a}}(\x)\;\hbox{and for all}\;i\in[\I^{\ba}_{\x(t)}]^c. \end{equation}
Hence, for $k\geq \bar{k}_o$, \begin{eqnarray*}{\bf I}_2&=& \int_{I^{\bar{a}}(\x)} \sum_{i=1}^r\ \gk^2 e^{\gk \psi_i(x_{\gk}(t))}\left\| \nabla\psi_i(x_{\gk}(t))\right\|\,\left|\<\nabla\psi_i(x_{\gk}(t)),p_{\gk}(t)\>\right|\sp dt \\&= 
& \int_{I^{\bar{a}}(\x)} \sum_{i\in\I^{\bar{a}}_{\x(t)}}\ \gk^2 e^{\gk \psi_i(x_{\gk}(t))}\left\| \nabla\psi_i(x_{\gk}(t))\right\|\,\left|\<\nabla\psi_i(x_{\gk}(t)),p_{\gk}(t)\>\right|\sp dt \\&+& \int_{I^{\bar{a}}(\x)} \sum_{i\in[\I^{\bar{a}}_{\x(t)}]^c}\ \gk^2 e^{\gk \psi_i(x_{\gk}(t))}\left\| \nabla\psi_i(x_{\gk}(t))\right\|\,\left|\<\nabla\psi_i(x_{\gk}(t)),p_{\gk}(t)\>\right|\sp dt\\&\leq& \int_{I^{\bar{a}}(\x)} \sum_{i\in\I^{\bar{a}}_{\x(t)}}\ \gk^2 e^{\gk \psi_i(x_{\gk}(t))}\left\| \nabla\psi_i(x_{\gk}(t))\right\|\,\left|\<\nabla\psi_i(x_{\gk}(t)),p_{\gk}(t)\>\right|\sp dt \\&+& \int_{I^{\bar{a}}(\x)} \sum_{i\in[\I^{\bar{a}}_{\x(t)}]^c}\ \gk^2 \bar{M}_{\psi}^2 e^{-\gk(\frac{\bar{a}}{2})}\|p_{\gk}\|_{\infty}\sp dt. \end{eqnarray*}
This yields the existence of $M_4>0$ and $\bar{k}_1\geq \bar{k}_o$ such that for $ k\geq \bar{k}_1$,
\begin{equation} {\bf I}_2\leq  \int_{I^{\bar{a}}(\x)} \sum_{i\in\I^{\bar{a}}_{\x(t)}}\ \gk^2 e^{\gk \psi_i(x_{\gk}(t))}\left\| \nabla\psi_i(x_{\gk}(t))\right\|\,\left|\<\nabla\psi_i(x_{\gk}(t)),p_{\gk}(t)\>\right|\sp dt  +  {M}_4\|p_{\gk}\|_{\infty}.\label{newp1}\end{equation}
Now let $i\in\{1,\dots,r\}$. Since $\nabla\psi_i$ is Lipschitz, and $x_{\gk}$ and $p_{\gk}$ are absolutely continuous, we deduce that  the function $\left|\<p_{\gk}(\cdot),\nabla\psi_i(x_{\gk}(\cdot))\>\right|$ is absolutely continuous. Thus,  for $t\in [0,T]$ a.e., there exists $\bar\vartheta_{\gk}^i(t)\in \partial^2\psi_i(x_{\gk}(t))$ such that 
\begin{eqnarray}\label{mp10} && \frac{d}{dt}\left|\<p_{\gk}(t),\nabla\psi_i(x_{\gk}(t))\>\right|\\ &=& \nonumber \left[\<\dot{p}_{\gk}(t),\nabla\psi_i(x_{\gk}(t))\>+\<	\bar\vartheta^i_{\gk}(t)\dot{x}_{\gk}(t),p_{\gk}(t)\>\right]\underbrace{\sign (\<p_{\gk}(t),\nabla\psi_i(x_{\gk}(t))\>)}_{s^i_{\gk}(t)}.
\end{eqnarray}
Now substitute into (\ref{mp10}) the term  $\dot{p}_{\gk}(t)$ from \eqref{mpim1}, we obtain that for $t\in [0,T]$ a.e.
\begin{eqnarray} \label{ai}  &&\sum_{j=1}^r s_{\gk}^i(t) \gk^2 e^{\gk \psi_j(x_{\gk}(t))}\<\nabla\psi_j(x_{\gk}(t)),\nabla\psi_i(x_{\gk}(t))\>\<\nabla\psi_j(x_{\gk}(t)),p_{\gk}(t)\> \\\nonumber  &=&\nonumber \frac{d}{dt}|\<p_{\gk}(t),\nabla\psi_i(x_{\gk}(t))\>|-s_{\gk}^i(t)\<\bar\vartheta^i_{\gk}(t)\dot{x}_{\gk}(t),p_{\gk}(t)\>\\ \nonumber &-&  s_{\gk}^i(t)\left\<\B_{\gk}(t)p_{\gk}(t) +\l_{\gk} \omega_{\gk}(t),\nabla\psi_i(x_{\gk}(t))\right\>.
\end{eqnarray}
As $x_{\gk}$ converges uniformly to $\x$, there exists $\bar{k}_2\geq \bar{k}_1$ such that for $k\geq \bar{k}_2$, $x_{\gk}(t)\in \bar{B}_{\bar{\rho}}(\x(t))$ for all $t\in [0,T]$, where $\bar{\rho}$ is the constant of Lemma \ref{A2.2lemma}. This latter implies that,  for $t\in I^{\bar{a}}(\x)$ and $k\geq \bar{k}_2$,  we have
\begin{eqnarray}\label{sum} && \sum_{i\in\I^{\bar{a}}_{\x(t)}} \sum_{j=1}^r s_{\gk}^i(t) \gk^2 e^{\gk \psi_j(x_{\gk}(t))}\<\nabla\psi_j(x_{\gk}(t)),\nabla\psi_i(x_{\gk}(t))\>\<\nabla\psi_j(x_{\gk}(t)),p_{\gk}(t)\> \\ \nonumber  &=&  \sum_{j=1}^r  \sum_{i\in\I^{\bar{a}}_{\x(t)}}  s_{\gk}^i(t) \gk^2 e^{\gk \psi_j(x_{\gk}(t))}\<\nabla\psi_j(x_{\gk}(t)),\nabla\psi_i(x_{\gk}(t))\>\<\nabla\psi_j(x_{\gk}(t)),p_{\gk}(t)\> \\\nonumber &=&\bp\bp  \sum_{j\in\I^{\bar{a}}_{\x(t)}}  \sum_{i\in\I^{\bar{a}}_{\x(t)}}  s_{\gk}^i(t) \gk^2 e^{\gk \psi_j(x_{\gk}(t))}\<\nabla\psi_j(x_{\gk}(t)),\nabla\psi_i(x_{\gk}(t))\>\<\nabla\psi_j(x_{\gk}(t)),p_{\gk}(t)\> \\ \nonumber  &+& \bp\bp\bp \bp\sum_{j\in[\I^{\bar{a}}_{\x(t)}]^c}  \sum_{i\in\I^{\bar{a}}_{\x(t)}}  s_{\gk}^i(t) \gk^2 e^{\gk \psi_j(x_{\gk}(t))}\<\nabla\psi_j(x_{\gk}(t)),\nabla\psi_i(x_{\gk}(t))\>\<\nabla\psi_j(x_{\gk}(t)),p_{\gk}(t)\> \\\nonumber &=&  \bp\bp  \sum_{j\in\I^{\bar{a}}_{\x(t)}}  \Bigg\{ \gk^2 e^{\gk \psi_j(x_{\gk}(t))} \Bigg(\|\nabla\psi_j(x_{\gk}(t))\|^2\\\nonumber &+&  \sum_{\substack{i\in\I^{\bar{a}}_{\x(t)}\\ i\not=j}} s_{\gk}^i(t) s_{\gk}^j(t) \<\nabla\psi_j(x_{\gk}(t)),\nabla\psi_i(x_{\gk}(t))\> \Bigg) |\<\nabla\psi_j(x_{\gk}(t)),p_{\gk}(t)\>|\Bigg\}\\ \nonumber &+&   \bp\bp\bp \bp\sum_{j\in[\I^{\bar{a}}_{\x(t)}]^c}  \sum_{i\in\I^{\bar{a}}_{\x(t)}}  s_{\gk}^i(t) \gk^2 e^{\gk \psi_j(x_{\gk}(t))}\<\nabla\psi_j(x_{\gk}(t)),\nabla\psi_i(x_{\gk}(t))\>\<\nabla\psi_j(x_{\gk}(t)),p_{\gk}(t)\> \\\nonumber   &\geq& (1-\bar{b})  \sum_{j\in\I^{\bar{a}}_{\x(t)}}  \gk^2 e^{\gk \psi_j(x_{\gk}(t))} \|\nabla\psi_j(x_{\gk}(t))\|^2  |\<\nabla\psi_j(x_{\gk}(t)),p_{\gk}(t)\>|\\\nonumber &+&  \bp\bp\bp \bp\sum_{j\in[\I^{\bar{a}}_{\x(t)}]^c}  \sum_{i\in\I^{\bar{a}}_{\x(t)}}  s_{\gk}^i(t) \gk^2 e^{\gk \psi_j(x_{\gk}(t))}\<\nabla\psi_j(x_{\gk}(t)),\nabla\psi_i(x_{\gk}(t))\>\<\nabla\psi_j(x_{\gk}(t)),p_{\gk}(t)\>.\end{eqnarray}
Combining this latter inequality for \eqref{sum} with  the summation  over $i\in \I^{\bar{a}}_{\x(t)}$ of \eqref{ai},  and using \eqref{pmpeq7},  we deduce that for $t\in I^{\bar{a}}(\x)$ and for $k\geq \bar{k}_2$, 
\begin{eqnarray}& &\label{finalproof1} \sum_{j\in\I^{\bar{a}}_{\x(t)}} \gk^2 e^{\gk \psi_j(x_{\gk}(t))} \|\nabla\psi_j(x_{\gk}(t))\|^2  |\<\nabla\psi_j(x_{\gk}(t)),p_{\gk}(t)\>|\\\nonumber &\leq& \frac{1}{1-\bar{b}} \Bigg\{ \sum_{i\in\I^{\bar{a}}_{\x(t)}} \bigg[ \frac{d}{dt}|\<p_{\gk}(t),\nabla\psi_i(x_{\gk}(t))\>|-s_{\gk}^i(t)\<\bar\vartheta^i_{\gk}(t)\dot{x}_{\gk}(t),p_{\gk}(t)\>\\\nonumber  &-&  s_{\gk}^i(t)\left\<\B_{\gk}(t)p_{\gk}(t) +\l_{\gk} \omega_{\gk}(t),\nabla\psi_i(x_{\gk}(t))\right\>\bigg] + \sum_{j\in[\I^{\bar{a}}_{\x(t)}]^c}  \sum_{i\in\I^{\bar{a}}_{\x(t)}} \bar{M}_{\psi}^{3}\gk^2 e^{-\frac{1}{2}\gk \bar{a}} \|p_{\gk}\|_{\infty}\Bigg\}. \end{eqnarray}
From \eqref{ai}, we have for $i=1,\dots, r$, that
\begin{eqnarray*} \label{totalderiv}  && \frac{d}{dt}|\<p_{\gk}(t),\nabla\psi_i(x_{\gk}(t))\>|=\\\nonumber && + \sum_{j=1}^r s_{\gk}^i(t) \gk^2 e^{\gk \psi_j(x_{\gk}(t))}\<\nabla\psi_j(x_{\gk}(t)),\nabla\psi_i(x_{\gk}(t))\>\<\nabla\psi_j(x_{\gk}(t)),p_{\gk}(t)\> \\\nonumber  &+&\nonumber s_{\gk}^i(t)\<\bar\vartheta^i_{\gk}(t)\dot{x}_{\gk}(t),p_{\gk}(t)\>\\ \nonumber &+&  s_{\gk}^i(t)\left\<\B_{\gk}(t)p_{\gk}(t) +\l_{\gk} \omega_{\gk}(t),\nabla\psi_i(x_{\gk}(t))\right\>.
\end{eqnarray*}
This gives using \eqref{importantchadi1}, \eqref{pmpeq7} and the uniform boundedness of $(\dot{x}_{\gk})_k$, the existence of $M_5>0$ and $\bar{k}_3\geq \bar{k}_2$ such that for $k\geq \bar{k}_3$, we have
\begin{equation} \label{totalderivbis} \left| \int_{[I^{\bar{a}}(\x)]^c}\sum_{i=1}^r\frac{d}{dt}\left|\<p_{\gk}(t),\nabla\psi_i(x_{\gk}(t))\>\right|\sp dt\right|\leq M_5(\|p_{\gk}\|_\infty+{d}_{\gk}),\;\hbox{and}\end{equation}
\begin{equation} \label{totalderivbisbis}\left| \int_{I^{\bar{a}}(\x)}\sum_{i\in [\I^{\bar{a}}_{\x(t)}]^c}\frac{d}{dt}\left|\<p_{\gk}(t),\nabla\psi_i(x_{\gk}(t))\>\right|\sp dt\right|\leq M_5(\|p_{\gk}\|_\infty+{d}_{\gk}).\end{equation}
Add to \eqref{totalderivbis} that $$\left|\int_0^T\sum_{i=1}^r\frac{d}{dt}\left|\<p_{\gk}(t),\nabla\psi_i(x_{\gk}(t))\>\right|\sp dt\right| \leq 2r\bar{M}_\psi \|p_{\gk}\|_{\infty}, $$
we deduce the existence of $M_6>0$ and $\bar{k}_4\geq \bar{k}_3$ such that for $k\geq \bar{k}_4$, we have
$$ \left| \int_{I^{\bar{a}}(\x)}\sum_{i=1}^r\frac{d}{dt}\left|\<p_{\gk}(t),\nabla\psi_i(x_{\gk}(t))\>\right|\sp dt\right|\leq  M_6(\|p_{\gk}\|_\infty+d_{\gk}).$$
This latter inequality with \eqref{totalderivbisbis} yield the existence of $M_7>0$ and $\bar{k}_5\geq \bar{k}_4$ such that for $k\geq \bar{k}_5$, we have
\begin{equation} \label{totalderivbisbis1}\left| \int_{I^{\bar{a}}(\x)}\sum_{i\in \I^{\bar{a}}_{\x(t)}}\frac{d}{dt}\left|\<p_{\gk}(t),\nabla\psi_i(x_{\gk}(t))\>\right|\sp dt\right|\leq M_7(\|p_{\gk}\|_\infty+d_{\gk}).\end{equation}
Now, integration the both sides of \eqref{finalproof1} on $I^{\bar{a}}(\x)$, and using \eqref{totalderivbisbis1}, we get the existence of $M_8>0$ and and $\bar{k}_6\geq \bar{k}_5$ such that for $ k\geq \bar{k}_6$, \begin{equation}\label{newimport}  \int_{I^{\bar{a}}(\x)} \sum_{j\in\I^{\bar{a}}_{\x(t)}} \gk^2 e^{\gk \psi_j(x_{\gk}(t))} \|\nabla\psi_j(x_{\gk}(t))\|^2  |\<\nabla\psi_j(x_{\gk}(t)),p_{\gk}(t)\>|\sp dt \leq M_8(\|p_{\gk}\|_\infty+d_{\gk}).\end{equation}
Using that $x_{\gk}$ converges uniformly to $\x$, and by assuming that $\bar{a}\leq \frac{\e}{2}$, where $\e$ is the constant of  \eqref{pisieps}, we get the existence of $\bar{k}_7\geq \bar{k}_6$ such that for $k\geq \bar{k}_7$, $t\in I^{\bar{a}}(\x)$ and $j\in\I^{\bar{a}}_{\x(t)}$, we have $\psi_j(x_{\gk}(t))\geq -\e$, and hence by \eqref{pisieps}, $\|\nabla\psi_j(x_{\gk}(t)\|>\eta.$
Then, for $M_9:=\frac{M_8}{\eta}$, \eqref{newimport} yields that for $k\geq \bar{k}_7$, \begin{equation*}\label{pmpeq10bis}\int_{I^{\bar{a}}(\x)} \sum_{j\in\I^{\bar{a}}_{\x(t)}} \gk^2 e^{\gk \psi_j(x_{\gk}(t))} \|\nabla\psi_j(x_{\gk}(t))\|  |\<\nabla\psi_j(x_{\gk}(t)),p_{\gk}(t)\>|\sp dt \leq M_9(\|p_{\gk}\|_\infty+d_{\gk}). \end{equation*}
Combining this latter with \eqref{newp1}, we conclude that for $k\geq \bar{k}_7$, ${\bf I}_2\leq M_9(\|p_{\gk}\|_{\infty}+d_{\gk})+M_4\|p_{\gk}\|_\infty$, and hence by \eqref{I1bounded} and for $M_{10}:=M_3+M_4+M_9,$
\begin{equation}\label{pmpeq10}{\bf I}={\bf I}_1+{\bf I}_2\leq M_{10}(\|p_{\gk}\|_{\infty} +d_{\gk}).\;\;\end{equation}
 Therefore, for $k\geq\bar{k}_7$, we have from \eqref{ML}, \eqref{mp7}, \eqref{mp11} and \eqref{pmpeq10}, that $$ \int_0^T \|\dot{p}_{\gk}(t)\|\sp dt \leq (M_2+M_{10})(\|p_{\gk}\|_{\infty} +d_{\gk})\leq  (M_2+M_{10})(M_1 +M_L).$$
 This yields that $(\dot{p}_{\gk})$ is uniformly bounded in $L^1$, which terminates the proof of Theorem \ref{thm1mpcomp} under the temporary assumption: $C$ is compact.\vspace{-0.2cm}\\
  
 Now, we proceed to show that the temporary assumption \enquote{\it $C$ is compact} can be replaced by (A2.4).  Let $C$ be unbounded, but satisfies (A2.4) for some $y_o$ and $R_o$.  We introduce an additional constraint to $C$ via  $\psi_{r+1}\colon\R^n\f\R,$  where  $$\psi_{r+1}(x):=\frac{1}{2}\left(\|x-y_o\|^2-R_o^2\right)\;\;\hbox{for all}\; x\in
\R^n.$$ 
Hence, $C_{r+1}$, the zero sublevel set of $\psi_{r+1}$, is $\bar{B}_{R_o}(y_o)$ and it contains $\x(t)$, for all $t\in [0,T]$,  in its interior. Denote by  $(\hat{P})$  the problem $(P)$  with the following modifications: 
\begin{itemize}
\item The {\it unbounded} sweeping set $C$ is now replaced by the {\it compact} set $$\hat{C}:=C\cap C_{r+1}=C\cap\bar{B}_{R_o}(y_o) =\bigcap_{i=1}^{r+1}\{x\in\R^n : \psi_i(x)\leq 0\}.$$
\item The function $\varphi$ is replaced by the function $\hat{\varphi}:=\varphi + I_{{C_{r+1}}}$, where $I_{{C_{r+1}}}$ is the indicator function of the set $C_{r+1}$. Clearly we have $\dom \hat{\varphi}=\hat{C}$ and (A3)  is satisfied  $\hat{\varphi}$ on $\hat{C}$. Moreover, since $\varphi= \hat{\varphi}$ on the open set $\inte C_{r+1}$, we have \begin{equation}\label{partialphihat} \partial \varphi(x)=\partial\hat{\varphi}(x),\;\;\forall x\in C\cap\inte{C_{r+1}}. \end{equation}
\item The set $C_0$ is replaced by the closed set \begin{equation} \label{C0hat} \hat{C}_0:=C_0\cap C_{r+1}=C_0\cap \bar{B}_{R_o}(y_o)\subset C\cap \bar{B}_{R_o}(y_o)=\hat{C}=\dom \hat{\varphi}.\end{equation}
\end{itemize}
We claim that $(\x,\u)$ is a strong local minimizer for $(\hat{P})$. Indeed, since $(\x,\u)$ is admissible for $(P)$ and $\x(t)\in C\cap \inte C_{r+1}$ for all $t\in [0,T]$, then \eqref{partialphihat} and \eqref{C0hat} yield that  $(\x,\u)$ is admissible for $(\hat{P})$. For the optimality, let $\hat{\delta}>0$ satisfying \begin{equation}\label{deltahat}\hat{\delta}<\min\{\delta,R_o-\bar{\e}\},\;\;\hbox{where}\;\bar{\e}:=\max\{\|\x(t)-y_o\|,\;t\in[0,T]\}<R_o.\end{equation}
Let $(x,u)$ be admissible for $(\hat{P})$ such that $\|x- \x\|_{\infty}\leq\hat{\delta}$. Then, $x(t)\in \hat{C}$ for all $t\in [0,T]$, and by \eqref{deltahat},  $\|x- \x\|_{\infty}\leq{\delta}$  and $\|x(t)-y_o\|<R_o$ for all $t\in [0,T]$. Hence, \eqref{partialphihat} implies that $(x,u)$ is admissible for $(P)$. Thus, the optimality of $(\x,\u)$ for $(P)$ yields that $g(\x(0),\x(T))\leq g(x(0),x(T))$, which shows   the optimality of $(\x,\u)$ for $(\hat{P})$. Now, since $\x(t)\in \inte C_{r+1}$ for all $t\in [0,1]$, we have: \begin{itemize}
\item $\hat{I}^0(\x):=\{t\in [0,T] : \x(t)\in\bdry \hat{C}\}=\{t\in [0,T] : \x(t)\in\bdry C\}=I^0{(\x)}.$
\item For $t\in \hat{I}^0(\x)$, \begin{equation*} \hat{\I}^0_{\x(t)}: =\{i\in\{1,\dots,r+1\} : \psi_i(\x(t))=0\}=\{i\in\{1,\dots,r\} : \psi_i(\x(t))=0\}=\I^0_{\x(t)}. \end{equation*}
\end{itemize}
This yields that  (A2.3)  is the same for the family  $(\nabla\psi_i(\x(\cdot))_{i=1}^{r+1}$. However,  having  (A2.2)  satisfied by the family  $(\nabla\psi_i)_{i=1}^{r+1}$, that now includes $\nabla\psi_{r+1}$,  is not automatic, but it holds true   under assumption (A2.4)$(i)$, due to  Lemma \ref{A2.2H2}. 
 On the other hand, since $\hat{C}\subset C$, $\hat{C}_0\subset C_0$ and $\inte \hat{C}=\inte C\cap B_{R_o}(y_o)$, we conclude that the data of the problem $(\hat{P})$ satisfies  (A2.5), (A5) and (A6), and thus, satisfies all the assumptions  (A1)-(A2.3) and (A2.5)-(A6), with $\hat{C}$ compact. Therefore, the proof of this theorem, where (A2.4) holds,  is completed by applying  to the strong local minimizer $(\x,\u)$ of $(\hat{P})$  the  version of this theorem already proven for $C$ {\it compact},  and by noticing the following:
\begin{itemize}
\item  $\hat{C}\subset C$ yields  the set $\hat{C}$ used in the definitions of $\partial_\ell\varphi$, $\partial^2_\ell\varphi$, $\partial^{\sp x}_\ell f (t,\cdot,u)$, $\partial^2_\ell\psi_i$ and $\partial^L_\ell g$, can be replaced by $C$. 
\item $\x(t)\in \inte C_{r+1}$ for all $t\in [0,1]$, implies that $\xi^{r+1}\equiv 0$, $\vartheta_{r+1}\equiv 0$ and $\nu_{r+1}\equiv 0$.
\item The local property of the limiting normal cone and   $\x(0)\in\inte \bar{B}_{R_o}(x_o)$ (by (A2.4)$(ii)$),  give that   $$N_{\hat{C}_0}^L(\x(0))= N_{C_0\cap \bar{B}_{R_o}(x_o)}^L(\x(0))=N_{{C}_0}^L(\x(0)).$$
\end{itemize}

For the ``Furthermore" part of the theorem, let $C_T=\R^n$.  Proposition \ref{prop4old} implies that  $\l =1$. The convexity assumption of $f(t,x,U)$ for $x\in C \cap \bar{B}_{\delta}(\x(t))$ and $t\in [0,T]$, is removed  by using the {\it relaxation} technique of \cite[Section 5.2]{verachadi}  in the same fashion as in Step 7 of the proof of \cite[Theorem 5.1]{verachadi}. 

The proof of Theorem \ref{thm1mpcomp} is terminated.\hspace{0.3cm}  \tallqed
 
 \begin{remark} In the nontriviality condition of Theorem \ref{thm1mpcomp}, the presence of  $\|p(T)\|$ instead of $\|p\|_{\infty}$ and $(\|\nu^i\|_{\TV})_{i=1}^r$ results from having these two norms bounded above by $\|p(T)\|$. Indeed, if we take $k\f\infty$ in \eqref{mp7}, we conclude that \begin{equation*}\label{normofp} \|p\|_\infty \leq e^{(M_\ell+K+2M_{\psi}\sqrt{M_\xi})}\|p(T)\|=\frac{M_1}{1+L_g+M_L}\|p(T)\|.\end{equation*} 
On the other hand, using  \eqref{pisieps} and \eqref{pmpeq10}, and by applying on each $\psi_i$ the same argument employed in \cite[Equation (82)]{verachadi} we deduce  the existence of $M_{10}>0$ and $\bar{k}_9\geq \bar{k}_8$ such that for $k\geq \bar{k}_9$ and for $i=1,\dots,r$, we have \begin{equation*}\label{pmpeq10bisbis} \int_0^T \gk^2 e^{\gk \psi_i(x_{\gk}(t))}\left|\<\nabla\psi_i(x_{\gk}(t)),p_{\gk}(t)\>\right|\sp dt \leq M_{10}(\|p_{\gk}\|_{\infty}+ d_{\gk}).\end{equation*}
This gives, using \eqref{mp7}, that for $k\geq \bar{k}_9$ and for $i=1,\dots,r,$ we have \begin{equation}\label{radonbound} \|\nu^i_{\gk}\|_{\TV} \leq M_{10}(\|p_{\gk}\|_{\infty}+d_{\gk})\leq\frac{M_{10}M_1}{1+L_g+M_L}(\|p_{\gk}(T)\|+d_{\gk}) + M_{10}d_{\gk},\end{equation}
where $\nu^i_{\gk}$ be the finite signed Radon measure on $[0,T]$ defined by \begin{equation*} \label{dnuigk} d\nu^i_{\gk}(t):= \gk \xi^i_{\gk}(t)\<\nabla\psi_i(x_{\gk}(t)),p_{\gk}(t)\>\sp dt.\end{equation*}
Since for each $i\in\{1,\dots,r\}$, the signed Radon measure $\nu^i$ of Theorem \ref{thm1mpcomp} is the weak* limit of $\nu_{\gk}^i$, see Step 4 of \cite[Proof of Theorem 6.1]{VCpaper} for more details, we deduce after taking $k\f\infty$ in  \eqref{radonbound} that  \begin{equation*}\label{normtvnui}  \|\nu^i\|_{\TV}\leq \frac{M_{10}M_1}{1+L_g+M_L}\|p(T)\|.\end{equation*} 
 \end{remark}

\section{Appendix} \label{auxresults} In this section, we present an example to which our Pontryagin-type maximum principle (Theorem \ref{thm1mpcomp}) is applied to obtain an optimal solution. Furthermore, we establish auxiliary results that are used in different places of the paper, and we provide  proofs for Propositions \ref{proppsigk} and \ref{propcgk(k)}.

\subsection{Example}\label{examplesec}

In this example we illustrate how Theorem \ref{thm1mpcomp}  can be applied to find an optimal solution. We consider the following  data for $(P)$ (see Figure \ref{Fig1}): 
\begin{itemize}[leftmargin=*]
\item The perturbation mapping $f\colon\R^3\times\R\f\R^3$  is defined by $$f((x_1,x_2,x_3),u)=(4x_1+u,x_2-1,-2x_1-x_2+u+2).$$
\item The two functions $\psi_1,\;\psi_2\colon \R^3\f\R$ are defined by $$\psi_1(x_1,x_2,x_3):=x_1^2+x_2^2+x_3\,\;\hbox{and}\,\;\psi_2(x_1,x_2,x_3):=x_1^2+(x_2-2)^2+x_3,$$ and hence, the set $C$ is the {\it nonsmooth}, convex and {\it  unbounded} set $$C=C_1\cap C_2:=\{(x_1,x_2,x_3) : \psi_1(x_1,x_2,x_3)\leq 0\}\cap \{(x_1,x_2,x_3) : \psi_2(x_1,x_2,x_3)\leq 0\}.$$
\item The objective function $g\colon\R^6\f\R\cup\{\infty\}$  is defined  by $$g(x_1,x_2,x_3,x_4,x_5,x_6):=\begin{cases} -x_4^2-x_6-1& \;\;(x_4,x_5,x_6)\in C, \vspace{0.1cm}\\ \infty&\;\;\hbox{Otherwise}. \end{cases}$$
\item The function $\varphi$ is the indicator function of $C$ and $T:=\tfrac{1}{2}.$ 
\item \sloppy The control multifunction is the constant $U(t):=[-1,1]$ for all $t\in [0,\frac{1}{2}]$, $C_0:=\{(0,1,-1)\}$, and $C_T:=\{(x_1,x_2,x_3) \in\R^3: 8x_1-4x_3 -9=0\}.$
\end{itemize}
\begin{figure}[tb]
\centering
\includegraphics[scale=0.45]{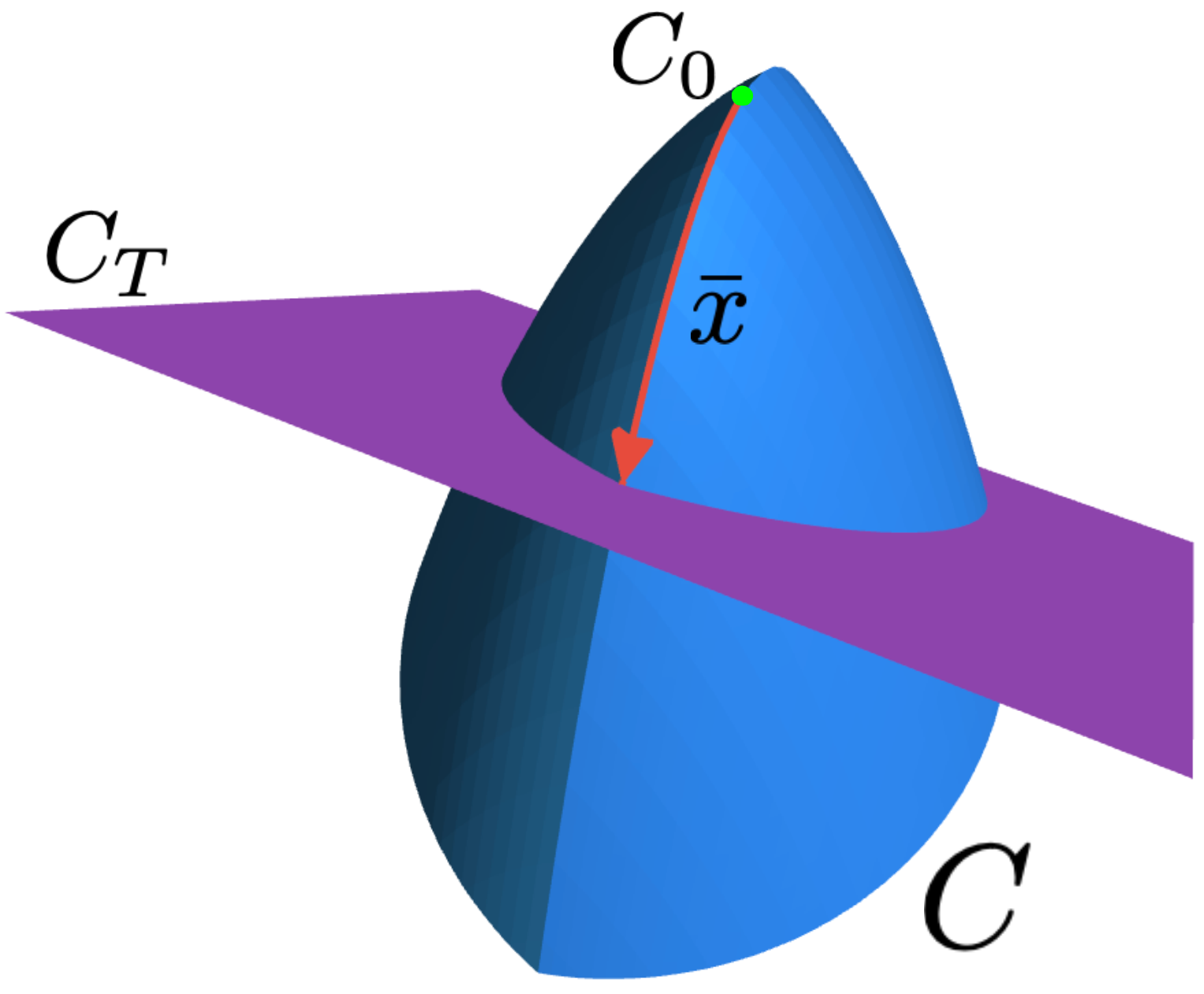}
\caption{\label{Fig1} Example \ref{examplesec}}
\end{figure} 
One can easily verify that (A1)-(A2.2) and (A2.4)-(A6) are satisfied. Define   the curve
 $$\Gamma:=\{(x_1,x_2,x_3) : x_1^2+x_3+1=0\;\hbox{and}\;x_2=1\}= (\bdry C_1\cap \bdry C_2)\subset\bdry C.$$ 
 Since $C_0\subset \Gamma$ and  $g$ vanishes on $\Gamma$   and is strictly positive elsewhere in $C$, we may seek  for $(P)$  a candidate $(\x,\u)$ for  optimality  with  $\x:=(\x_1,\x_2,\x_3)$ belonging to $\Gamma$, if possible,  and hence we have \begin{equation} \label{ex1}\begin{cases} \x_1^2(t)+\x_3(t)+1=0\;\,\hbox{and}\,\;\x_2(t)=1\;\forall t\in [0,\tfrac{1}{2}]\,\;\hbox{and}\\[1pt] \x(0)\tran=(0,1,-1)\,\;\hbox{and}\;\,\x(\tfrac{1}{2})\tran\in \{ (\tfrac{1}{2},1,-\tfrac{5}{4}), (-\tfrac{5}{2},1,-\tfrac{29}{4})\}.\end{cases} \end{equation}
Note that (A2.3) is satisfied on $\Gamma$ for $b=\tfrac{3}{5}$.\footnote{Note that for $(x_1,x_2,x_3)\in\Gamma$ with $-\tfrac{\sqrt{3}}{2} < x_1< \tfrac{\sqrt{3}}{2}$, we have $\<\nabla\psi_1(x_1,x_2,x_3),\nabla\psi_2(x_1,x_2,x_3)\>=4x_1^2-3<0,$ and hence, the maximum principle of \cite{pinho22} cannot be applied to this sweeping set $C$.} Then, applying  Theorem  \ref{thm1mpcomp}  to such candidate $(\x,\u)$ we obtain the existence of an adjoint vector $p:=(p_1,p_2,p_3)\in BV([0,\frac{1}{2}];\R^3)$, two finite signed Radon measures $\nu_1$, $\nu_2$ on $\left[0,\frac{1}{2}\right]$, $\xi_1$, $\xi_2\in L^{\infty}([0,\tfrac{1}{2}];\R^{+})$, and $\l\geq 0$,  such that  when incorporating  equations \eqref{ex1}  into Theorem  \ref{thm1mpcomp}$(i)$-$(vi)$, we obtain 
\begin{enumerate}[(a)]
\item $\|p(\frac{1}{2})\|+\l=1.$
\item The admissibility equation holds, that is, for $t\in[0,\tfrac{1}{2}]$ a.e.,  \begin{equation*}\begin{cases}\dot{\bar{x}}_1(t)= 4\x_1(t)+\u(t)-2\x_1(t)(\xi_1(t)+\xi_2(t)),\\ 
0= -2(\xi_1(t)-\xi_2(t)),\\ 
-2\dot{\x}_1(t)\x_1(t)=-2\x_1(t)+\u(t)+1-((\xi_1(t)+\xi_2(t)).\end{cases} \end{equation*} 
\item The adjoint equation is satisfied, that is, for  $t\in [0,\frac{1}{2}]$, 
\begin{eqnarray*} dp(t)&=&\begin{pmatrix*}\begin{array}{rrr}
\bp -4\, & 0\;\;\, & 2 \\
\bp 0\, & -1\;\;\, & 1 \\
\bp 0\, & 0\;\;\,  & 0 
\end{array}\end{pmatrix*}p(t)\sp dt\sp+\sp (\xi_1(t)+\xi_2(t))\begin{pmatrix} 
2\;\; & 0\;\; & 0  \\
0 \;\; & 2\;\; & 0 \\
0\;\; & 0 \;\; & 0
\end{pmatrix}p(t)\sp dt\sp\\[2pt]&+&\sp \begin{pmatrix} 
2\x_1(t) \\
2\\
1
\end{pmatrix} d\nu_1 + \begin{pmatrix} 
2\x_1(t) \\
-2\\
1
\end{pmatrix} d\nu_2.\end{eqnarray*}
\item The complementary slackness condition is valid, that is, for $t\in[0,\tfrac{1}{2}]$ a.e.,  $$\begin{cases}\xi_1(t)(2p_1(t)\x_1(t) + 2p_2(t)+p_3(t))=0, \\ \xi_2(t)(2p_1(t)\x_1(t) - 2p_2(t)+p_3(t))=0.\end{cases}$$
\item The transversality condition holds, that is, $$\begin{cases} -p(\frac{1}{2})\tran\in \l\{-1,0,-1)\}+\{\a(2,0,-1): \a\in\R\}& \hbox{if}\;\; \x(\tfrac{1}{2})\tran=(\tfrac{1}{2},1,-\tfrac{5}{4}),\\-p(\frac{1}{2})\tran\in \l\{(5,0,-1)\}+\{\a(2,0,-1): \a\in\R\}& \hbox{if}\;\; \x(\tfrac{1}{2})\tran=(-\tfrac{5}{2},1,-\tfrac{29}{4}).\end{cases}$$
\item $\max\{u\sp(p_1(t)+ p_3(t)) : u\in [-1,1]\}$ is attained at $\u(t)$ for $t\in[0,\frac{1}{2}]$ a.e.
\end{enumerate}
We temporarily assume that \begin{equation}\label{exam5} p_1(t)+p_3(t)\geq 0,\;\;\forall t\in[0,\tfrac{1}{2}]\;\hbox{a.e.}\end{equation} 
This gives from (f) that $u(t)=1$ for $t\in[0,\frac{1}{2}]$ a.e. Now solving the differential equations of (b) and using \eqref{ex1}, we obtain that \begin{equation*} \label{lastformu} \xi_1(t)=\xi_2(t)=1\;\,\hbox{and}\;\,\x(t)\tran=(t,1,-1-t^2),\;\;\forall t\in[0,\tfrac{1}{2}].\end{equation*}
Hence, from (d), we deduce that $p_2(t)=0$ for $t\in[0,\frac{1}{2}]$ a.e., and \begin{equation}\label{ex4} 2t\sp p_1(t)+p_3(t)=0,\;\,\forall t\in[0,\tfrac{1}{2}]\;\textnormal{a.e.} \end{equation}
Moreover, the adjoint equation (c) simplifies to the following
 \begin{equation}\label{ex3} \begin{cases}d{p}_1(t)= 2p_3(t)\sp dt + 2t d\nu_1+2td\nu_2,\\
0= p_3(t)\sp dt + 2d\nu_1-2d\nu_2,\\
 d{{p}}_3(t)= d\nu_1+ d\nu_2.  \end{cases} \end{equation} 
Using (a), \eqref{ex4}, (e), and \eqref{ex3}, a simple calculation gives that 
$$\begin{cases} \l=\tfrac{1}{4}\;\,\hbox{and}\;\,p(\tfrac{1}{2})\tran=(\tfrac{3}{4},0,0),\\[2pt]p(t)\tran=\left(\frac{3}{4(4t^2+1)},0,\frac{-3t}{2(4t^2+1)}\right)\;\hbox{on}\;[0,\tfrac{1}{2}),\\[6pt] d\nu_1=\tfrac{12t^3+24t^2+3t-6}{8(4t^2+1)^2}\sp dt+\tfrac{3}{16}\sp\delta_{\big\{\bbp\tfrac{1}{2}\bbp\big\}}\;\,\hbox{and}\;\,d\nu_2=\tfrac{-12t^3+24t^2-3t-6}{8(4t^2+1)^2}\sp dt+\tfrac{3}{16}\sp\delta_{\big\{\bbp\tfrac{1}{2}\bbp\big\}} \;\hbox{on}\;[0,\tfrac{1}{2}],\end{cases}\vspace{0.01cm}$$
where $\delta_{\{a\}}$ denotes the unit measure concentrated on the point $a$. Note that for all $t\in [0,\tfrac{1}{2}]$, we have $p_1(t)+p_3(t)\geq 0 $, and hence, the temporary assumption \eqref{exam5} is satisfied.

Therefore, the above analysis, realized via Theorem \ref{thm1mpcomp}, produces  an admissible pair $(\x,\u)$, where $$\textstyle \x(t)\tran=(t,1,-1-t^2)\;\;\hbox{and}\;\;\u(t)=1,\;\;\forall t\in [0,\tfrac{1}{2}],$$
which is optimal for $(P)$.

\subsection{Auxiliary results}

\begin{lemma}   \label{dcexistence} Assume that \textnormal{(A2.1)-(A2.2)} hold and that $C$ is compact. Then for $c\in\bdry C$, there exists a vector $d_c\not=0$ such that \begin{equation*} \label{veradc1} \frac{4\eta^2}{\bar{M}_{\psi}}\leq\|d_c\|\leq r\bar{M}_\psi,\;\,\hbox{and}\;\,\left\<\frac{d_c}{\|d_c\|},\nabla\psi_i(c)\right\>\leq -\frac{4\eta^2}{r\bar{M}_{\psi}},\;\;\forall i\in\I^0_c.\end{equation*} 
\end{lemma}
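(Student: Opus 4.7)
The plan is to construct $d_c$ as (minus) the minimum-norm element of the convex hull of the active gradients, exploiting assumption (A2.2) which prevents $0$ from lying in that hull with quantitative margin $2\eta$. More precisely, for $c\in\bdry C$, set
\[
K_c := \conv\{\nabla\psi_i(c) : i\in\I^0_c\},
\]
which is a nonempty compact convex set, and let $g^{*}$ be the unique element of minimum Euclidean norm in $K_c$. Then I define $d_c := -g^{*}$ and verify the three required estimates.

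First, writing $g^{*}=\sum_{i\in\I^0_c}\lambda_i\nabla\psi_i(c)$ with $\lambda_i\geq 0$ and $\sum_i\lambda_i=1$, assumption (A2.2) yields $\|g^{*}\|>2\eta$. Since $\bar{M}_\psi\geq 2\eta$, this gives the lower bound $\|d_c\|\geq 2\eta\geq \tfrac{4\eta^2}{\bar{M}_\psi}$. For the upper bound, the minimum-norm property of $g^{*}$ over $K_c$ implies $\|g^{*}\|\leq \max_{i\in\I^0_c}\|\nabla\psi_i(c)\|\leq \bar{M}_\psi\leq r\bar{M}_\psi$, so $\|d_c\|\leq r\bar{M}_\psi$. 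The key step is the angle estimate, which follows from the standard variational characterization of the projection of the origin onto a convex set: for every $g\in K_c$,
\[
\langle g^{*},\, g-g^{*}\rangle\geq 0, \qquad \text{hence}\qquad \langle g^{*}, g\rangle\geq \|g^{*}\|^2.
\]
Applying this to $g=\nabla\psi_i(c)\in K_c$ for each $i\in\I^0_c$ gives $\langle g^{*},\nabla\psi_i(c)\rangle\geq \|g^{*}\|^2$, and therefore
\[
\left\langle \frac{d_c}{\|d_c\|},\,\nabla\psi_i(c)\right\rangle = -\frac{\langle g^{*},\nabla\psi_i(c)\rangle}{\|g^{*}\|}\leq -\|g^{*}\|\leq -2\eta\leq -\frac{4\eta^2}{r\bar{M}_\psi},
\]
where the last inequality uses $r\bar{M}_\psi\geq 2\eta$ (which holds since $r\geq 1$ and $\bar{M}_\psi\geq 2\eta$).

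I do not anticipate any serious obstacle: all three bounds rely only on (A2.2), the uniform bound $\bar{M}_\psi$ on the active gradients, and the elementary projection inequality. A more naive candidate such as $d_c:=-\sum_{i\in\I^0_c}\nabla\psi_i(c)$ would fail the angle condition because the cross terms $\langle\nabla\psi_i(c),\nabla\psi_j(c)\rangle$ can be positive or negative, so the selection via the projection of the origin onto $K_c$ is what makes the argument uniform over $i\in\I^0_c$.
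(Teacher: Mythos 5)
Your proof is correct, and it takes a genuinely different route from the paper's. The paper defines, for each $i\in\I^0_c$, the Moreau decomposition $-\nabla\psi_i(c)=u_i(c)+v_i(c)$ with $u_i(c)\in N_C(c)$, $v_i(c)\in T_C(c)$, shows $2\eta\le\|v_i(c)\|\le\bar{M}_\psi$ and $\langle v_i(c),\nabla\psi_i(c)\rangle=-\|v_i(c)\|^2\le-4\eta^2$, and then sets $d_c:=\sum_{j\in\I^0_c}v_j(c)$; the angle estimate for each $i$ follows by discarding the nonpositive cross terms $\langle v_j(c),\nabla\psi_i(c)\rangle$ with $j\ne i$ (which are $\le 0$ because each $v_j(c)$ is a Clarke tangent vector). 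Your construction instead takes $d_c:=-g^*$, where $g^*$ is the minimum-norm element of $\conv\{\nabla\psi_i(c):i\in\I^0_c\}$, and uses the variational characterization of projection to get $\langle g^*,\nabla\psi_i(c)\rangle\ge\|g^*\|^2$ uniformly over $i$. Your argument is more elementary: it does not invoke the Moreau decomposition theorem, the Clarke tangent cone, or the explicit normal-cone formula from Proposition \ref{prop1}, only (A2.2) and the projection inequality for a finite-dimensional polytope. It also yields strictly sharper estimates, $\|d_c\|\le\bar{M}_\psi$ (versus $r\bar{M}_\psi$) and $\langle d_c/\|d_c\|,\nabla\psi_i(c)\rangle\le-2\eta$ (versus $-4\eta^2/(r\bar{M}_\psi)$), which of course imply the stated ones since $r\ge1$ and $\bar{M}_\psi\ge2\eta$. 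The paper's tangential-projection construction has the merit of making transparent that $d_c$ is a Clarke tangent direction to $C$ at $c$, which is the geometric role $d_c$ plays in the later invariance argument; your $d_c=-g^*$ is also a tangent direction (being the negative of a normal direction with the strict inward-pointing inequality), but the paper's route exhibits this structurally.
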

\begin{proof}  Let $c\in\bdry C$. For each $i\in\I^0_c$, we denote by $u_i(c)$ and $v_i(c)$ the unique projections of $-\nabla\psi_i(c)$ to $N_C(c)$ and $T_C(c)$, respectively.  By Moreau decomposition theorem, see \cite{moreaudecomp}, we have $$-\nabla\psi_i(c)=u_i(c)+v_i(c)\;\hbox{and}\;\<u_i(c),v_i(c)\>=0.$$ This yields that $\|\nabla\psi_i(c)\|^2=\|u_i(c)\|^2+\|v_i(c)\|^2$, and hence \begin{equation}\label{verade1} \|v_i(c)\|\leq \|\nabla\psi_i(c)\|\leq \bar{M}_{\psi}. \end{equation} On the other hand, by Proposition \ref{prop1}$(ii)$, we have that $u_i(c)=\sum_{j\in\I^0_c}\l^i_j\nabla\psi_j(c)$, where $\l_j\geq 0$ for $j\in\I^0_c$. Then by (A2.2), \begin{equation*} \|v_i(c)\|=\|\nabla\psi_i(c)+u_i(c)=\left\|\left(\sum_{j\in\I^0_c,\sp j\not=i}\l^i_j\nabla\psi_j(c)\right) + (1+\l_i^i)\nabla\psi_i(c) \right\|\geq\left(1+\sum_{j\in\I^0_c}\l_j^i\right)2\eta\geq2\eta.\end{equation*} Hence, using \eqref{verade1}, we obtain that \begin{equation}\label{verade2} 2\eta\leq \|v_i(c)\|\leq \bar{M}_{\psi},\;\;\forall i\in \I^0_c.\end{equation} 
It follows that, 
\begin{equation}\label{vipsi}
\left\<  v_i(c), \nabla\psi_i(c)    \right\>= \left\<  v_i(c), -u_i(c)-v_i(c)  \right\>=-\|v_i(c)\|^2\le -4\eta^2.
\end{equation}
We define $d_c:=\sum_{j\in\I^0_c}v_j(c) $. As  $v_j(c)$ belongs to $T_C(c)$  for all $j\in \I^0_c$,  we have that $\left\<v_j(c),\nabla\psi_i(c)\right\> \le 0$, for all $i,j\in \I^0_c$. This, together with  \eqref{vipsi} gives that, for all $i\in \I^0_c$ we have
\begin{equation*}
\bar{M}_{\psi} \|d_c\| \ge  \left\<d_c, -\nabla\psi_i(c)\right\>\ge \left\<v_i(c),-\nabla\psi_i(c)\right\>\ge 4 \eta^2.
\end{equation*}
Whence, using the right inequality in \eqref{verade2}, we have 
\begin{equation*}
 \left\<d_c, \nabla\psi_i(c)\right\>\le - 4\eta^2, \; \forall i\in\I^0_c, \;\;\hbox{and}\; \; \frac{4 \eta^2}{\bar{M}_\psi} \le \|d_c\|\le r\bar{M}_\psi.
 \end{equation*}
This yields that $d_c\neq 0$ and,  upon dividing the first inequality in the above equation  by $\|d_c\|$,  the required result is established. \end{proof}

\begin{lemma}\label{lemmaaux1} Assume that \textnormal{(A2.1)} holds. Let $\a_n\ge 0$,  for all $n\in \N$, with   $\a_n\f\a_o$ and  let  $c_n\in C$ be a sequence such that  $\I^{\a_n}_{c_n}\neq\emptyset$, for all $n\in \N$, and $c_n\f c_o$. Then, $\I^{\a_o}_{c_o}\neq\emptyset$ and there exist $ \emptyset\not=\J_o\subset \{1,\dots,r\}$ and a subsequence of  $(\a_n, c_n)_n$  we do not relabel, such that  
$$\I^{\a_n}_{c_n}= \J_o\subset \I^{\a_o}_{c_o}\;\,\hbox{for all}\;\,n\in\N.$$
In particular, for any continuous function $\x\colon[0,1]\f C$ and for all  $a\ge 0$, we have $I^a(\x)$ is closed, and hence compact. \end{lemma}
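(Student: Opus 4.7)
The argument is essentially a pigeonhole-plus-continuity one, exploiting the fact that the index set $\{1,\dots,r\}$ is finite. My plan is as follows.

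First I would observe that $\mathcal{I}^{\alpha_n}_{c_n}$ is, for each $n$, a nonempty subset of the finite set $\{1,\dots,r\}$. Since the power set $\mathscr{P}(\{1,\dots,r\})$ is finite, the pigeonhole principle yields a subsequence (not relabeled) and a fixed nonempty $\mathcal{J}_o\subset \{1,\dots,r\}$ such that $\mathcal{I}^{\alpha_n}_{c_n}= \mathcal{J}_o$ for every $n$. This produces the ``constancy'' part of the conclusion for free and gives $\mathcal{J}_o\neq\emptyset$.

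Next I would pass to the limit to show $\mathcal{J}_o\subset \mathcal{I}^{\alpha_o}_{c_o}$. Fix any $i\in \mathcal{J}_o$. By the definition of $\mathcal{I}^{\alpha_n}_{c_n}$ in \eqref{Iax}, we have $-\alpha_n\le \psi_i(c_n)\le 0$ for all $n$. Since (A2.1) makes each $\psi_i$ continuous on $C+\rho B$ and $c_n\to c_o\in C$, together with $\alpha_n\to \alpha_o$, taking the limit in both inequalities yields $-\alpha_o\le \psi_i(c_o)\le 0$, i.e., $i\in \mathcal{I}^{\alpha_o}_{c_o}$. This shows $\mathcal{J}_o\subset \mathcal{I}^{\alpha_o}_{c_o}$, and in particular $\mathcal{I}^{\alpha_o}_{c_o}\neq \emptyset$.

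Finally, for the \emph{in particular} assertion, I would show that $I^a(\bar{x})\subset [0,T]$ is closed (compactness then follows from boundedness). Let $(t_n)_n\subset I^a(\bar{x})$ with $t_n\to t_o\in [0,T]$; I must show $t_o\in I^a(\bar{x})$. Applying the first part of the lemma with the constant sequence $\alpha_n\equiv a$ and $c_n:=\bar{x}(t_n)$, the continuity of $\bar{x}$ yields $c_n\to \bar{x}(t_o)$, and by hypothesis $\mathcal{I}^{a}_{c_n}\neq \emptyset$ for all $n$; hence $\mathcal{I}^{a}_{\bar{x}(t_o)}\neq\emptyset$, so $t_o\in I^a(\bar{x})$ by \eqref{Iax}.

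There is no real obstacle here: the only thing to keep track of is that passing to a subsequence is legitimate because we only need \emph{existence} of a subsequence along which $\mathcal{I}^{\alpha_n}_{c_n}$ is constant, and the continuity of $\psi_i$ provided by (A2.1) suffices to carry the two-sided inequality $-\alpha_n\le \psi_i(c_n)\le 0$ to the limit.
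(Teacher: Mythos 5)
Your proof is correct, and it takes a cleaner route than the paper's. For the existence of the constant subsequence, the paper introduces the index sets $\mathcal{N}_i:=\{n\in\mathbb{N}: -\alpha_n\le\psi_i(c_n)\le 0\}$, identifies the set $\mathcal{R}$ of indices $i$ for which $\mathcal{N}_i$ is infinite, pre-filters out indices $n$ on which some $j\notin\mathcal{R}$ is active, and then runs a greedy algorithm that intersects or trims the $\mathcal{N}_i$'s to arrive at an infinite index set along which $\mathcal{I}^{\alpha_n}_{c_n}$ is constant. You instead observe directly that $\mathcal{I}^{\alpha_n}_{c_n}$ takes values in the finite set of nonempty subsets of $\{1,\dots,r\}$, so by pigeonhole some nonempty $\mathcal{J}_o$ must occur along a subsequence. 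This is shorter and sidesteps the bookkeeping in the paper's algorithm (which even contains a minor typo, adding $i_2$ where it means $i_\ell$). The limit passage using the continuity of each $\psi_i$ from (A2.1), and the deduction of the closedness of $I^a(\bar{x})$ in the \emph{in particular} part with $\alpha_n\equiv a$ and $c_n=\bar{x}(t_n)$, are essentially identical to the paper's.
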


\begin{proof} For each $i\in \{1,\dots,r\}$, we define $\mathcal{N}_i:=\{n\in\N : -\a_n\le \psi_i(c_n)\le0\}$. Since $\bigcup_{i=1}^{r} \mathcal{N}_i=\N$, we have that the set $\mathcal{R}:=\{i\in\{1,\dots,r\} : |\mathcal{N}_i |=\infty\}$ is nonempty. Assume that $$\mathcal{R}=\{i_1,i_2,\dots,i_m\}\;\;\hbox{where}\;\;i_1<i_2<\cdots<i_m.$$
 For each $j\not \in\mathcal{R}$ and for each $\ell\in\{1,\dots,m\}$, we eliminate from $\mathcal{N}_{i_{\ell}}$ all the finite number of indices $n$ for which $-\a_n\le\psi_j(c_n)\le0$, if such indices exist. Thus, we now ensured that  $$\I^{\a_n}_{c_n}\subset \mathcal{R}\;\;\hbox{for all}\;\;n\in \bigcup_{\ell=1}^{m}\mathcal{N}_{i_{\ell}}. $$ 
 Now to construct the required subsequence and the set $\J_o$, we apply the following algorithm:
 \begin{enumerate}[(1)]
 \item  Let $\ell=2$ and $\J_o=\{i_1\}$.
 \item  If $\mathcal{N}_{i_{1}}\cap \mathcal{N}_{i_{\ell}}$ is an infinite set, then we replace $\mathcal{N}_{i_{1}}$ by $\mathcal{N}_{i_{1}}\cap \mathcal{N}_{i_{\ell}}$, and we add $i_2$ to $\J_o$. Otherwise, we replace $\mathcal{N}_{i_{1}}$ by   $\mathcal{N}_{i_{1}}\setminus (\mathcal{N}_{i_{1}}\cap \mathcal{N}_{i_{\ell}})$.
 \item  We increment $\ell$ by $1$. If $\ell=m+1$, then go to (4). Otherwise, go to (2).
 \item  Halt.
 \end{enumerate}
 At the end of the algorithm, the so-obtained set of indices $\mathcal{N}_{i_1}$ will be an infinite set. Moreover, if we consider $(c_{n_k})_k$ to be the subsequence of $(c_n)_n$ associated to $\mathcal{N}_{i_1}$, then we clearly have that $\I^{\a_{n_k}}_{c_{n_k}}= \J_o$ for all $k$. The continuity of $\psi_i$ for each  $i\in \{1,\dots,r\}$, and the convergence of $\a_n$ to $\a_o$ and $c_n$ to $c_o$ , yield that   $\J_o\subset \I^{\a_o}_{c_o}$. We proceed to prove the \enquote{in particular} part.  Let  $\x\colon[0,1]\f C$ be continuous and let $a\ge 0$. For $t_n \in I^a(\x)$ with $t_n\to t_o \in [0,1]$, we consider $\a_n=\a_o:= a$, $c_n:=\x(t_n)$, and $c_o:=\x(t_o)$. By the first part of this lemma, there exists $\J_o \neq \emptyset$  in $\{1,\dots,r\}$ such that, up to a subsequence, $\I_{\x(t_n)}^a= \J_o\subset \I_{\x(t_o)}^a$, implying that $t_o\in I^a(\x)$. \end{proof}
 
\begin{lemma}\label{A2.2lemma}  Assume \textnormal{(A2.1)} holds. Then assumption \textnormal{(A2.3)} is  equivalent to the existence $\bar{a}>0$, $\bar{b}\in(0,1)$ and $\bar{\rho}>0$, such that for all $t\in I^{\bar{a}}(\x)$ and  for all $j\in\I_{\x(t)}^{\bar{a}}$  we have $$\sum_{\substack{i\in\I^{\bar{a}}_{\x(t)}\\ i\not=j}}|\<\nabla \psi_i(x),\nabla\psi_j(x))\>|\leq \bar{b}\|\nabla\psi_j(x)\|^2,\;\;\forall x\in \bar{B}_{\bar{\rho}}(\x(t))\cap C.$$
\end{lemma}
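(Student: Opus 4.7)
The plan is to establish both implications, where the reverse direction is immediate and the forward direction requires a contradiction argument based on the compactness tools already developed in this section. Concretely, I would set $b \in (0,1)$ as in (A2.3) and aim to prove the local version with any fixed $\bar{b}\in(b,1)$, choosing $\bar{a}>0$ and $\bar{\rho}>0$ appropriately small.

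First I dispatch the easy direction: if the local version holds, then for $t\in I^0(\x)\subset I^{\bar{a}}(\x)$ and $j\in\I^0_{\x(t)}\subset \I^{\bar{a}}_{\x(t)}$, evaluating the displayed inequality at the point $x=\x(t)\in\bar{B}_{\bar{\rho}}(\x(t))\cap C$ and using that $\I^0_{\x(t)}\setminus\{j\}\subset \I^{\bar{a}}_{\x(t)}\setminus\{j\}$ immediately yields (A2.3) with $b:=\bar{b}$.

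For the nontrivial direction, I would argue by contradiction. Assume (A2.3) holds with constant $b$ but that, taking $\bar{b}\in(b,1)$ fixed and sequences $\bar{a}_n\searrow 0$, $\bar{\rho}_n\searrow 0$, for every $n$ there exist $t_n\in I^{\bar{a}_n}(\x)$, $j_n\in\I^{\bar{a}_n}_{\x(t_n)}$ and $x_n\in\bar{B}_{\bar{\rho}_n}(\x(t_n))\cap C$ violating the inequality. Using the compactness of $[0,T]$ and of the finite index set $\{1,\dots,r\}$, and applying \textnormal{Lemma \ref{lemmaaux1}} to the sequence $(\bar{a}_n,\x(t_n))$ which converges to $(0,\x(t^*))$, I extract a subsequence (not relabeled) along which $t_n\to t^*\in[0,T]$, $j_n\equiv j^*$, and $\I^{\bar{a}_n}_{\x(t_n)}\equiv\J_o\subset \I^0_{\x(t^*)}$. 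Since $\J_o\ni j^*$ is nonempty, $t^*\in I^0(\x)$. Continuity of $\x$ together with $x_n\in\bar{B}_{\bar{\rho}_n}(\x(t_n))$ gives $x_n\to\x(t^*)$, so continuity of each $\nabla\psi_i$ (from (A2.1)) allows passage to the limit in the violated inequality, producing
\[
\sum_{i\in\J_o,\ i\neq j^*}\bigl|\<\nabla\psi_i(\x(t^*)),\nabla\psi_{j^*}(\x(t^*))\>\bigr|\ \ge\ \bar{b}\,\|\nabla\psi_{j^*}(\x(t^*))\|^2.
\]

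Since $\J_o\subset\I^0_{\x(t^*)}$ and $j^*\in\I^0_{\x(t^*)}$, the left-hand side is majorized by $\sum_{i\in\I^0_{\x(t^*)},\ i\neq j^*}|\<\nabla\psi_i(\x(t^*)),\nabla\psi_{j^*}(\x(t^*))\>|$, which (A2.3) bounds above by $b\,\|\nabla\psi_{j^*}(\x(t^*))\|^2$. Because $j^*\in\I^0_{\x(t^*)}$, assumption (A2.2) applied with the unit-weight combination concentrated at $j^*$ gives $\|\nabla\psi_{j^*}(\x(t^*))\|>2\eta>0$, so we may divide by this nonzero quantity to obtain $\bar{b}\le b$, contradicting the choice $\bar{b}>b$. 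The main subtlety, and the reason for invoking \textnormal{Lemma \ref{lemmaaux1}}, is that a priori $\I^{\bar{a}_n}_{\x(t_n)}$ could contain indices that are only nearly active at $\x(t^*)$; the lemma guarantees that along the extracted subsequence all these indices lie in $\I^0_{\x(t^*)}$, which is exactly what allows (A2.3) to be invoked in the limit.
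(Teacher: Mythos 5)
Your proposal is correct and is essentially the paper's own proof: the paper disposes of Lemma \ref{A2.2lemma} with the single line \enquote{an argument by contradiction in conjunction with Lemma \ref{lemmaaux1}}, and your argument (easy direction by evaluating at $x=\x(t)$; hard direction by negating with $\bar{a}_n,\bar{\rho}_n\searrow 0$, extracting $t_n\to t^*$ and $j_n\equiv j^*$, stabilizing the index sets via Lemma \ref{lemmaaux1}, and passing to the limit) is exactly that argument fleshed out. The only caveat is that your division step relies on (A2.2) to ensure $\|\nabla\psi_{j^*}(\x(t^*))\|>2\eta>0$, which is not among the hypotheses the lemma lists (only (A2.1)) although it is in force wherever the lemma is applied; some such nondegeneracy is genuinely needed, since if an active gradient vanishes at $\x(t^*)$ the forward implication can fail (e.g. $\psi_1(x_1,x_2)=x_1^2-x_2^3$, $\psi_2(x_1,x_2)=-x_2$, $\x\equiv(0,0)$), so this is an imprecision in the lemma's statement rather than a gap in your proof.
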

\begin{proof} Using an argument by contradiction in conjunction with Lemma \ref{lemmaaux1}.\end{proof}

\begin{lemma} \label{conditionH} Let $S\subset\R^n$ be a nonempty and closed set. The assumption \textnormal{(A2.4)$(i)$} is satisfied by $S$ if one of the following conditions holds:
\begin{enumerate}[$(i)$]
\item $\bdry S$ is compact.
\item $S$ is star-shaped $($which includes convex and polyhedral sets$\sp)$.
\end{enumerate}
Moreover, in the two cases above, the radius $R_o$ can be taken to be arbitrarily large.
\end{lemma}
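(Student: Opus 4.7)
The plan is to handle the two cases separately, exploiting the geometric specifics of each hypothesis so that \textnormal{(A2.4)$(i)$} holds for arbitrarily large $R_o$.

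For case $(i)$, I fix any $y_o\in\R^n$ and choose $R_o>\max_{c\in \bdry S}\|c-y_o\|$, which is finite by compactness of $\bdry S$. Then $(\bdry S)\cap \S_{R_o}(y_o)=\emptyset$, and \textnormal{(A2.4)$(i)$} holds \emph{vacuously}. Since $R_o$ can be chosen above any prescribed threshold, the \enquote{arbitrarily large} conclusion is immediate.

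For case $(ii)$, let $a_o$ be a center of the star-shaped set $S$. I take $y_o:=a_o$ and let $R_o>0$ be arbitrary. For any $c\in(\bdry S)\cap \S_{R_o}(a_o)$ we have $c\neq a_o$ since $R_o>0$. The key step is to verify that the direction $d:=a_o-c$ lies in the Clarke tangent cone $T_S(c)$. I would prove this directly from the definition: for $y\in S$ close to $c$ and $t\in(0,1]$ small, set $z_y:=(1-t)y+t a_o$; by star-shapedness with center $a_o$, $z_y\in S$, and
\begin{equation*} \|z_y-(y+td)\|=\|(1-t)y+ta_o - y - t(a_o-c)\|=t\|c-y\|, \end{equation*}
which is arbitrarily small relative to $t$ once $y$ is taken close enough to $c$. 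Hence $d\in T_S(c)$. Since $N_S(c)$ is the polar of $T_S(c)$, the assumption $d\in N_S(c)$ would force $\<d,d\>\leq 0$, i.e.\ $c=a_o$, a contradiction; therefore $a_o-c\notin N_S(c)$, as required. Since no upper bound on $R_o$ was invoked, the \enquote{arbitrarily large} conclusion holds in this case too.

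The main obstacle I anticipate is case $(ii)$: star-shapedness at $a_o$ readily places $a_o-c$ in the \emph{contingent} tangent cone, but one must confirm the uniform-in-$y$ requirement built into the Clarke tangent cone. The short displayed calculation above resolves this by exhibiting a single affine family of perturbations that works uniformly for all $y\in S$ near $c$, at which point the polarity between $N_S$ and $T_S$ closes the argument.
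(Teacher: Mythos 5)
Your case $(i)$ is identical to the paper's: choose $R_o$ large enough that $\bdry S\subset B_{R_o}(y_o)$, and the condition is vacuous.

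Your case $(ii)$ is correct but takes a genuinely different route. The paper argues by contradiction via the \emph{proximal} normal characterization: assuming $y_o-s\in N_S(s)$, it asserts the existence of $\rho_s>0$ with $B_{\rho_s}(s+\rho_s\zeta_s)\subset S^c$, then produces a point of $[s,y_o]\subset S$ inside that ball. That \enquote{exterior ball} property is the geometric description of a \emph{proximal} normal; for the Clarke normal cone of an arbitrary closed set it does not hold directly, so the paper's argument as written is really ruling out $y_o-s\in N^P_S(s)$ and relies (implicitly) on the intended application to prox-regular sets, where the cones coincide. Your approach sidesteps this entirely: you verify $a_o-c\in T_S(c)$ by exhibiting, for each $y\in S$ near $c$ and each small $t$, the perturbation direction $v_y:=a_o-y\to a_o-c$ with $y+tv_y=(1-t)y+ta_o\in S$ by star-shapedness — exactly the sequential definition of the Clarke tangent cone — and then invokes the polarity $N_S(c)=T_S(c)^\circ$ to force $\|a_o-c\|^2\le 0$, a contradiction since $c\neq a_o$. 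This is cleaner, works verbatim for the Clarke normal cone of any closed set, and makes explicit the uniformity-in-$y$ that the contingent cone alone would not give. Both proofs deliver the \enquote{$R_o$ arbitrary} conclusion for the same reason: no upper bound on $R_o$ is ever used in case $(ii)$.
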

\begin{proof} $(i)$: It is sufficient to take $y_o$ any point in $\R^n$ and $R_o$ large enough so that $\bdry S\subset B_{R_o}(y_o)$.\vspace{0.2cm}\\
$(ii)$: Let $y_o$ be a center of $S$ and let $R_o$ be any positive number. We consider $s\in\bdry S\cap (\S_{R_o}(y_o))$. We claim that $y_o-s\not \in N_S(s)$. Indeed, if not, then there exists $\rho_s>0$ such that for $\zeta_s:=\frac{y_o-s}{\|y_o-s\|}$, we have $ B_{\rho_s}\bp\left(s+\rho_s\zeta_s\right)\subset S^c.$ Thus for $s_o:=s+\bar{\rho}_s\zeta_s$, where $0<\bar{\rho}_s<\min\{\rho_s,R_o\}$, we have $s_o\in[s,y_o]\subset S$ and $s_o\in B_{\rho_s}\bp\left(s+\rho_s\zeta_s\right)\subset S^c.$ This gives the desired contradiction. \end{proof}

\begin{lemma} \label{A2.2H2}  Assume that \textnormal{(A2.1)-(A2.2)} and  \textnormal{(A2.4)$(i)$} hold. Then the assumption \textnormal{(A2.2)} is satisfied by the family of functions $(\psi_i)_{i=1}^{r+1}$, where $$\psi_{r+1}(x):=\frac{1}{2}\left(\|x-y_o\|^2-R_o^2\right),\;\;\forall x\in\R^n.$$\end{lemma}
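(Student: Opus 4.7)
The plan is a compactness and contradiction argument. Suppose, contrary to the conclusion, that for every $n\in\N$ there exist a point $c_n\in\hat{C}:=C\cap\bar{B}_{R_o}(y_o)$ with nonempty active set $\J_n:=\{i\in\{1,\dots,r+1\}:\psi_i(c_n)=0\}$, and nonnegative coefficients $(\l_i^n)_{i\in\J_n}$ with $\sum_{i\in\J_n}\l_i^n=1$, satisfying $\big\|\sum_{i\in\J_n}\l_i^n\nabla\psi_i(c_n)\big\|\le 2/n$. Since $\hat{C}$ is compact, I pass to a subsequence with $c_n\to c_o\in\hat{C}$. Because $\{1,\dots,r+1\}$ has only finitely many subsets, a pigeonhole step (identical in spirit to the one inside the proof of Lemma \ref{lemmaaux1}) yields a further subsequence along which the active set stabilizes, $\J_n\equiv \J_o$, and continuity of the $\psi_i$'s then forces $\J_o\subset\{i:\psi_i(c_o)=0\}$. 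A final extraction gives $\l_i^n\to \l_i\ge 0$ with $\sum_{i\in\J_o}\l_i=1$, and letting $n\to\infty$ in the displayed inequality yields $\sum_{i\in\J_o}\l_i\nabla\psi_i(c_o)=0$.

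I would then distinguish three cases on the limiting coefficients. In case $(a)$, either $r+1\notin\J_o$ or $\l_{r+1}=0$; the identity becomes a convex combination of the gradients $(\nabla\psi_i(c_o))_{i\in\J_o\cap\{1,\dots,r\}}\subset (\nabla\psi_i(c_o))_{i\in\I^0_{c_o}}$ summing to zero, which, after extending by zero over $\I^0_{c_o}\setminus\J_o$, contradicts the original hypothesis (A2.2) for the family $(\psi_i)_{i=1}^r$. In case $(b)$, $\J_o=\{r+1\}$ with $\l_{r+1}=1$; then $\nabla\psi_{r+1}(c_o)=c_o-y_o=0$ forces $c_o=y_o$, in conflict with $\psi_{r+1}(c_o)=0$ (which demands $\|c_o-y_o\|=R_o>0$). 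In case $(c)$, $\l_{r+1}>0$ and $\J_o\cap\{1,\dots,r\}\neq\emptyset$; rearranging,
$$
y_o-c_o=\frac{1}{\l_{r+1}}\sum_{i\in\J_o\cap\{1,\dots,r\}}\l_i\nabla\psi_i(c_o),
$$
which places $y_o-c_o$ in $N_C(c_o)$ by the representation \eqref{proxformulaprox} in Proposition \ref{prop1}$(i)$. Because $\J_o$ contains some index $i\le r$ we have $c_o\in\bdry C$, and $r+1\in\J_o$ gives $c_o\in\S_{R_o}(y_o)$; hence $c_o\in(\bdry C)\cap \S_{R_o}(y_o)$ with $y_o-c_o\in N_C(c_o)$, in direct contradiction with (A2.4)$(i)$.

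All three cases being excluded, the desired uniform lower bound holds, and one may take $2\eta'$ to be any positive number strictly below the infimum produced by the argument. The only delicate point is the uniform extraction of a common active set $\J_o$ along the subsequence; otherwise the convex combinations would be indexed by varying sets and the limit identity would be ill-posed. Once that step is secured, the rest is bookkeeping: the cone formula \eqref{proxformulaprox} and assumption (A2.4)$(i)$ are precisely calibrated to exclude the two new mechanisms by which adding the ball constraint $\psi_{r+1}$ might destroy the positive linear independence ensured by the original (A2.2).
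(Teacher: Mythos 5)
Your proof is correct and follows essentially the same contradiction-and-compactness route as the paper: extract a convergent $c_n\to c_o$, stabilize the active set via a pigeonhole argument (the paper invokes its Lemma~\ref{lemmaaux1} for this), pass to the limit in the near-degenerate convex combination, and then exhaust the cases using (A2.2) for $(\psi_i)_{i=1}^r$, the identity $\nabla\psi_{r+1}(c_o)=c_o-y_o$, and (A2.4)$(i)$ together with the normal-cone formula \eqref{proxformulaprox}. The only cosmetic difference is that you organize the endgame by the structure of $(\J_o,\l_{r+1})$ while the paper splits on whether $c_o\in\inte C$ or $c_o\in\bdry C$ after first observing that $\l_{r+1}^n\neq0$ eventually (hence $c_n\in\S_{R_o}(y_o)$); the two case analyses cover the same ground.
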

\begin{proof} 
If not, then there exist $(c_n)_n$ in $\R^n$ and $(\l^n_1,\dots,\l_{r+1}^n)_n\in[0,\infty)^{r+1}$ such that for each $n$ we have: 
\begin{itemize}
\item $c_n \in \hat{C}:=C\cap C_{r+1}=C\cap\bar{B}_{R_o}(y_o)$ which is a compact set.
\item $\hat{\I}^0_{c_n}:=\{i\in\{1,\dots,r+1\} : \psi_i(c_n)=0\}\not=\emptyset,$ and hence $c_n\in\bdry \hat{C}$. 
\item $\l_i^n=0$ for $i\not\in\hat{\I}^0_{c_n}$, and \begin{equation}\label{A2.21} \sum_{i\in\hat{\I}^0_{c_n}}\l_i^n=1\;\;\hbox{and}\;\; \left\|\sum_{i\in\hat{\I}^0_{c_n}}\lambda_i^n\nabla\psi_i(c_n)\right\|\leq \frac{2}{n}.\end{equation}
\end{itemize}
Since (A2.2) is satisfied, once can easily deduce that for $n\geq \big\lceil\frac{1}{\eta} \big\rceil$, we have that $\l_{r+1}^n\not=0$. This yields that $c_n\in (\S_{R_o}(y_o))$, for  $n\geq \big\lceil\frac{1}{\eta} \big\rceil$. Hence the bounded sequence $(c_n)_n$ admits a subsequence, we do not relabel, that converges to $c_o\in C\cap \S_{R_o}(y_o)\subset \bdry \hat{C}$. Applying Lemma \ref{lemmaaux1}, we deduce that $(c_n)_n$ has a subsequence, we do not relabel, that satisfies $$\exists\,\emptyset\not=\hat{\I}_o\subset \{1,\dots,r+1\}\;\,\hbox{such that}\;\,\hat{\I}^0_{c_n}=\hat{ \I}_o\subset \hat{\I}^0_{c_o}\;\,\hbox{for all}\;\,n\in\N.$$
Hence, using \eqref{A2.21}, we get that  \begin{equation*} \sum_{i\in\hat{\I}_{o}}\l_i^n=1\;\;\hbox{and}\;\; \left\|\sum_{i\in\hat{\I}_{o}}\lambda_i^n\nabla\psi_i(c_n)\right\|\leq \frac{2}{n}.\end{equation*}
Now taking $n\f\infty$ in this latter and using the boundedness of the sequence $(\l^n_1,\dots,\l_{r+1}^n)_n$, we deduce the existence of $(\l^o_1,\dots,\l_{r+1}^o)\in[0,\infty)^{r+1}$ such that \begin{equation}\label{A2.22} \sum_{i\in\hat{\I}_{o}}\l_i^o=1\;\;\hbox{and}\;\; \sum_{i\in\hat{\I}_{o}}\lambda_i^o\nabla\psi_i(c_o)=0.\end{equation}
{\underline{Case 1}}: $c_o\in\inte C$.\vspace{0.1cm}\\
Then, for $n$ sufficiently large, we have $\hat{\I}_o=\hat{\I}^0_{c_n}=\{r+1\}$. This yields, using \eqref{A2.22}, that $\l_{r+1}^o=1$ and $c_o-y_o=0,$ which contradicts $c_o\in \S_{R_o}(y_o)$.\vspace{0.1cm}\\
{\underline{Case 2}}: $c_o\in\bdry C.$\vspace{0.1cm}\\
We claim that $\l_{r+1}^o\not=0$. Indeed, if not then from \eqref{A2.22} and for $\l_i^o:=0$ for all $i\in \hat{\I}^0_{c_o}\setminus\hat{\I}_o$,  we have 
$$\sum_{i\in\hat{\I}^0_{c_o}}\l_i^o=1\;\;\hbox{and}\;\; \sum_{i\in\hat{\I}^0_{c_o}}\lambda_i^o\nabla\psi_i(c_o)=0,$$
which contradicts (A2.2). Hence, $\l_{r+1}^o\not=0$, and this gives using \eqref{A2.22}, that $$y_o-c_o=\sum_{\substack{i\in\hat\I_{o} \\ i\not=r+1}}\frac{\l_i^o}{\l_{r+1}}\nabla\psi_i(c_o)=\sum_{i\in\hat{\I}^0_{c_o}}\frac{\l_i^o}{\l_{r+1}}\nabla\psi_i(c_o)\in N_C^P(c_o),$$
where $\l_i^o:=0$ for $i\in \hat{\I}^0_{c_o}\setminus\hat{\I}_o$. This contradicts (A2.4)$(i)$.  \end{proof}

\subsection{Proofs of \textnormal{Propositions \ref{proppsigk}} and \textnormal{\ref{propcgk(k)}}} 

\begin{proofb} {\bf of Proposition \ref{proppsigk}}. $(i)$: The definition of $\psi_{\gk}$ in \eqref{psigkdef} and assumption (A2.1)  yield that, for all $k\in \N$,  $\psi_{\gk}$ is $\CO^{1,1}$ on $C+\rho B$. The bound in \eqref{gradbdd} follows from \eqref{gradpsigk} and the definition of $\bar{M}_{\psi}$.  For the remaining properties,  see \cite[Subsection 2.2]{Fang}.\vspace{0.2cm}\\
$(ii)$: If this statement is not true, there exist $({\gamma}_{k_n})_n$  with $k_n\ge n$,   $x_{{\gamma}_{k_n}}\in \R^n$ with  $\psi_{{\gamma}_{k_n}}(x_{{\gamma}_{k_n}})=0$, and $ z_{{\gamma}_{k_n}}\in B_{\frac{2}{n}}(x_{{\gamma}_{k_n}})$ such that for all $n> \frac{2}{\rho}$ we have
$$\|\nabla\psi_{{\gamma}_{k_n}}(z_{{\gamma}_{k_n}})\|\leq 2\eta.$$ 
Using  \eqref{pisgkineq} and the definition of $\psi_{{\gamma}_{k_n}}$, it follows  that \begin{equation}\label{twopsi1}  \psi(x_{{\gamma}_{k_n}})\leq 0\leq \psi(x_{{\gamma}_{k_n}}) +\frac{\ln(r)}{\gk}\;\,\hbox{and }\;\, \frac{\left\| \sum_{i=1}^{r}e^{{\gamma}_{k_n} \psi_i(z_{{\gamma}_{k_n}})}\nabla\psi_i(z_{{\gamma}_{k_n}}) \right\|}{\sum_{i=1}^{r}e^{{\gamma}_{k_n} \psi_i(z_{{\gamma}_{k_n}})}}  \leq 2\eta.\end{equation}
Then, $x_{{\gamma}_{k_n}}\in C$ and hence,  there exists a subsequence, we do not relabel, of $({\gamma}_{k_n})_n$ along which  $(x_{{\gamma}_{k_n}})_n$ and   $(z_{{\gamma}_{k_n}})_n$  converge to the same element $z_o\in C.$ Taking $n\f\infty$ in \eqref{twopsi1} and using the fact that $e^{{\gamma}_{k_n}\psi_i(z_{{\gamma}_{k_n}})}\f0$ whenever $\psi_i(z_o)<0$, we get the existence of a sequence of nonnegative numbers $(\l_i)_{i\in\I^0_{z_o}}$ such that
$$\psi(z_o)=0 ,\;\;\left\| \sum_{i\in\I^0_{z_o}}\l_i\nabla\psi_i(z_o) \right\|\leq 2\eta\;\;\hbox{and}\;\;\sum_{i\in\I^0_{z_o}}\l_i=1.$$
This contradicts  assumption (A2.2) since $\psi(z_o)=0$ is equivalent to  $\I^0_{z_o}\not=\emptyset$.\vspace{0.2cm}\\
$(iii)$: If this statement is not true,  there exist $({\gamma}_{k_n})_n$  with $k_n\geq n$ and  $x_{{\gamma}_{k_n}}\in C^{\gkn}\subset C$  such that \begin{equation}\label{epsi1} \|\nabla\psi_{\gkn}(x_{\gkn})\|\leq \eta\;\;\hbox{and}\;\,-\frac{1}{n}\leq \psi_{\gkn}(x_{\gkn})\leq0.\end{equation}
This yields that $x_{\gkn}\in C$ and $\psi(x_{\gkn})\f0$. Since $C$ is compact, we can assume that $x_{\gkn}\f z_o\in C$, and hence $\psi(z_o)=\lim_{n\f 0} \psi(x_{\gkn})=0$. Now, from \eqref{epsi1}$(a)$ and \eqref{subpsi}, we have 
$$\frac{\|\sum_{i=1}^{r}e^{{\gamma}_{k_n} \psi_i(x_{{\gamma}_{k_n}})}\nabla\psi_i(x_{{\gamma}_{k_n}})\|}{\sum_{i=1}^{r}e^{{\gamma}_{k_n} \psi_i(x_{{\gamma}_{k_n}})}}\leq \eta.$$
Taking $n\f \infty$ in this latter and using that $e^{{\gamma}_{k_n}\psi_i(x_{{\gamma}_{k_n}})}\f0$ whenever $\psi_i(z_o)<0$, we get the existence of a sequence of nonnegative numbers $(\l_i)_{i\in\I^0_{z_o}}$ such that $$\left\| \sum_{i\in\I^0_{z_o}}\l_i\nabla\psi_i(z_o) \right\|\leq \eta\;\;\hbox{and}\;\;\sum_{i\in\I^0_{z_o}}\l_i=1.$$
This contradicts  assumption (A2.2), since $\psi(z_o)=0$ implies that $\I^0_{z_o}\not=\emptyset$.
\end{proofb}

\begin{proofb} {\bf of Proposition \ref{propcgk(k)}}. $(i)$: By Proposition \ref{proppsigk}, we have that for each $k\geq k_3$, $\psi_{\gk}$ satisfies the {\it same assumptions} satisfied by the function $\psi$ of \cite[Theorem 3.1]{VCpaper}. Hence, $(i)$ follows immediately from \cite[Theorem 3.1$(i)$]{VCpaper} where $\psi$ is replaced by $\psi_{\gk}$. \vspace{0.2cm}\\
$(ii)$: First we prove that \begin{equation}\label{inteCsubsetinteCgk(k)} \inte C\subset \bigcup_{k\in \N} \inte C^{\gk}(k).\end{equation} Let $x\in\inte C$. Since $\psi(x)<0$ and $\big(\a_k+\frac{\ln r}{\gk}\big)\f 0$, there exists $k_x\geq k_4$ such that $\a_{k_x}+\frac{\ln r}{\gamma_{k_x}}<-\psi(x).$
This yields, using \eqref{pisgkineq}, that $\psi_{\gamma_{k_x}}(x)<-\a_{k_x}$, and hence $$x\in \inte C^{\gamma_{k_x}}(k_x)\subset \bigcup_{k\in \N} \inte C^{\gk}(k).$$ 
This terminates the proof of \eqref{inteCsubsetinteCgk(k)}. Hence, $$\inte C\subset \bigcup_{k\in \N} \inte C^{\gk}(k)\subset \bigcup_{k\in \N} C^{\gk}(k)\subset  \bigcup_{k\in \N}  \inte C^{\gk}\subset \inte C.$$
Therefore, the equation \eqref{union.Cgk(k)} holds true. Now, since $(\psi_{\gk})_k$ is monotonically nonincreasing in terms of $k$, and $(\a_k)_k$ is a decreasing sequence, we conclude that the sequence $(C^{\gk}(k))_k$ is a nondecreasing sequence. This gives that the Painlev\'e-Kuratowski limit of the sequence $(C^{\gk}(k))_k$ satisfies 
\begin{equation}\label{limitCgk(k)} \lim_{k\f\infty} C^{\gk}(k)=\clo\Bigg(\bigcup_{k\in \N} C^{\gk}(k)\Bigg). \end{equation} 
Upon taking the closure of $\inte C$ in \eqref{union.Cgk(k)} and using from Proposition \ref{prop1}$(i)$ that $C=\clo({\inte C})$, equation \eqref{limitCgk(k)} yields that the Painlev\'e-Kuratowski limit of the sequence $(C^{\gk}(k))_k$ is $C$.\vspace{0.2cm}\\
$(iii)$: For $c\in\bdry C$,  we set $-2\a_c:=\max\{\psi_i(c) : i\not\in\I^0_c\}$, whenever $\I^0_c \subsetneq \{1,\dots, r\}$. We have $\a_c>0$ and  $\psi_i(c)\leq -2\a_c$ for all $i\not\in\I^0_c$. The continuity of $\psi_i$ yields the existence of $\r_1>0$ such that $\psi_i(x)<-\a_c$, for all $x\in \bar{B}_{2\r_1}(c)$ and for all $i\not\in\I^0_c$. Let  $d_c$ be the nonzero vector of Lemma \ref{dcexistence}.  Choose $\k_1\geq k_3$ large enough so that for all $k\ge \k_1$ we have $\frac{\ln(r)}{\gk}+\a_k\leq \a_c$ and  $\sigma_k\leq \r_1$, where $\a_k$ and  $\sigma_k$  are the sequences defined in \eqref{sigmadef}. Then, for all $k\geq\k_1$, and for all $x\in \bar{B}_{\r_1}(c)$ we have:\begin{itemize}
\item $\left\|x+\sigma_k\frac{d_c}{\|d_c\|}-c\right\|\leq \|x-c\|+\sigma_k\leq 2\r_1$, 
\item $\psi_i\left(x+\sigma_k\frac{d_c}{\|d_c\|}\right)< -\a_c\leq -\frac{\ln(r)}{\gk}-\a_k$ for all $i\not\in\I^0_c$.
\end{itemize}  
Therefore, \begin{equation}\label{inotinIc}\psi_i\left(x+\sigma_k\frac{d_c}{\|d_c\|}\right)+\frac{\ln(r)}{\gk}<-\a_k,\;\;\forall i\not\in \I^0_{c},\;\ \forall k\geq\k_1,\,\hbox{and}\;\forall x\in \bar{B}_{\bar{r}_1}(c).\end{equation}
Now, for $i\in \I^0_c$, by using  Lemma \ref{dcexistence} we get $$ \left\<\frac{d_c}{\|d_c\|},\nabla\psi_i(c)\right\>\le -\frac{4\eta^2}{r\bar{M}_{\psi}},\;\;\forall i\in\I^0_c.$$ 
The continuity of $\nabla\psi_i$ yields the existence of $0<r_{\c}\leq \r_1$ such that   \begin{equation} \label{localversion} \left\<\frac{d_c}{\|d_c\|},\nabla\psi_i(x)\right\><- \frac{2\eta^2}{r\bar{M}_{\psi}},\;\;\forall i\in\I^0_c\;\,\hbox{and}\,\;\forall x\in \bar{B}_{2r_{\c}}(c).\end{equation} We consider $k_c\geq \k_1$ large enough so that $\sigma_k\leq r_{\c}$. Then, for all $k\geq k_c$ and for all $x\in  \bar{B}_{r_{\c}}(c)$, we have $$\left\|x+\sigma_k\frac{d_c}{\|d_c\|}-c\right\|\leq \|x-c\|+\sigma_k\leq 2r_{\c}. $$
Hence by the mean value theorem applied to $\psi_i$ on $[x+\sigma_k\frac{d_c}{\|d_c\|},x]$, for $i\in\I^0_c$, for $k\geq k_c$, and for $x\in  \bar{B}_{r_{\c}}(c)\cap C$, and using \eqref{localversion} and \eqref{sigmadef}, we obtain the existence of $z\in [x+\sigma_k\frac{d_c}{\|d_c\|},x] \subset  \bar{B}_{2r_{\c}}(c)$  such that $$\psi_i\left(x+\sigma_k\frac{d_c}{\|d_c\|}\right)=\psi_i(x) + \sigma_k\left\<\nabla\psi_i(z),\frac{d_c}{\|d_c\|}\right\> < 0 -\frac{\ln(r)}{\gk}-\a_k.$$
Combining this latter with \eqref{inotinIc}, we conclude that \begin{equation*}\psi_i\left(x+\sigma_k\frac{d_c}{\|d_c\|}\right)+\frac{\ln(r)}{\gk}<-\a_k,\;\;\forall i\in\{1,\dots,r\},\;\;\forall k\geq k_c,\;\hbox{and}\;\forall x\in \bar{B}_{r_{\c}}(c)\cap C.\end{equation*}
Therefore, using \eqref{psidef} and \eqref{pisgkineq}, we deduce that \begin{equation*}\psi_{\gk}\left(x+\sigma_k\frac{d_c}{\|d_c\|}\right)<-\a_k,\;\;\forall k\geq k_c\;\,\hbox{and}\;\,\forall x\in \bar{B}_{r_{\c}}(c)\cap C.\end{equation*}
The proof of $(iii)$ is terminated. \end{proofb}


\begin{thebibliography}{00}

\bibitem{outrata} L. Adam, J. Outrata, On optimal control of a sweeping process coupled with an ordinary differential equation, {\it Discrete Contin. Dyn. Syst. B} 19(9) (2014), 2709--2738.

\bibitem{adly} S. Adly, F. Nacry, L. Thibault, Discontinuous sweeping process with prox-regular sets, ESAIM: COCV, 23:4 (2017), 1293--1329.

\bibitem{brokate} M. Brokate, P. Krej\v{c}\'{i}, Optimal control of ODE systems involving a rate independent variational inequality, Discrete and continuous dynamical systems series B 18 (2013), 331--348.

\bibitem{brudnyi} A. Brudnyi, Y. Brudnyi, Methods of Geometric Analysis in Extension and Trace Problems, Volume 1, Monographs in Mathematics, 102. Birkh\"{a}user/Springer Basel AG, Basel, 2012.

\bibitem{ccmn} T.H. Cao, G. Colombo, B. Mordukhovich, D. Nguyen, Optimization of fully controlled sweeping processes, J. Differ. Equ. 295 (2021), 138--186.

\bibitem{ccmnbis} T.H. Cao, G. Colombo, B. Mordukhovich, D. Nguyen, Optimization and discrete approximation of sweeping processes with controlled moving sets and perturbations, J. Differ. Equ. 274 (2021), 461--509.

\bibitem{cmo0} T.H. Cao, B. Mordukhovich, Optimal control of a perturbed sweeping process via discrete approximations, Discrete Contin. Dyn. Syst. Ser. B 21, (2016), 3331--3358.

\bibitem{cmo} T.H. Cao, B. Mordukhovich, Optimality conditions for a controlled sweeping process with applications to the crowd motion model, Disc. Cont. Dyn. Syst. Ser. B 22 (2017), 267--306.

\bibitem{cmo2} T.H. Cao, B. Mordukhovich, Optimal control of a nonconvex perturbed sweeping process, J.Differ. Equ. 266 (2019), 1003--1050.

\bibitem{clarkeold} F.H. Clarke, {\it Optimization and Nonsmooth Analysis}, Wiley Interscience, New York, 1983, Republished as Vol. 5 of Classics in Applied Mathematics, S.I.A.M., Philadelphia, 1990.

\bibitem{clarkebook} F.H. Clarke, Functional Analysis, Calculus of Variations and Optimal Control, Graduate Texts in Mathematics, 264. Springer, London, 2013.

\bibitem{clsw} F.H. Clarke, Yu. Ledyaev, R.J. Stern, P.R. Wolenski, Nonsmooth Analysis and Control Theory, Graduate Texts in Mathematics, 178, Springer-Verlag, New York, 1998.

\bibitem{prox} F.H. Clarke, R. Stern, P. Wolenski, { Proximal smoothness and the lower-$C^2$ property}, J. Convex Analysis 2 (1995), 117--144.

\bibitem{chhm2} G. Colombo, R. Henrion, N.D. Hoang, B.S. Mordukhovich, Optimal control of the sweeping process, Dyn. Contin. Discrete Impuls. Syst. Ser. B 19 (2012), 117--159.

\bibitem{chhm} G. Colombo,  R. Henrion, N.D. Hoang, B.S. Mordukhovich, Optimal control of the sweeping process over polyhedral controlled sets, J. Differ. Equ. 260 (2016) no. 4, 3397--3447. 

\bibitem{cmn0} G. Colombo, B. Mordukhovich, D. Nguyen, Optimization of a perturbed sweeping process by constrained discontinuous controls, SIAM Journal on Control and Optimization, 58 (2020) no. 4, 2678--2709.

\bibitem{pinho} M.d.R. de Pinho, M.M.A. Ferreira, G.V. Smirnov, Optimal Control Involving Sweeping Processes, Set-Valued Var. Anal. 27 (2019), no. 2, 523--548.

\bibitem{pinhoEr} M.d.R. de Pinho, M.M.A. Ferreira, G.V. Smirnov, Correction to: Optimal Control Involving Sweeping Processes, Set-Valued Var. Anal. 27 (2019), 1025--1027.

\bibitem{pinhonum} M.d.R. de Pinho, M.M.A. Ferreira, G.V. Smirnov, Optimal Control with Sweeping Processes: Numerical Method, J Optim Theory Appl 185, (2020), 845--858. 

\bibitem{pinhonew} M.d.R. de Pinho, M.M.A. Ferreira, G.V. Smirnov, Necessary conditions for optimal control problems with sweeping systems and end point constraints, Optimization, 71:11, (2021), 3363--3381.

\bibitem{pinho22} M.d.R. de Pinho, M.M.A. Ferreira, G.V. Smirnov, A Maximum Principle for optimal control problems involving sweeping processes with a nonsmooth set, arXiv:2301.13620v1 [math.OC].

\bibitem{palladino} C. Hermosilla,  M. Palladino, Optimal Control of the Sweeping Process with a Nonsmooth Moving Set, SIAM Journal on Control and Optimization, 60:5 (2022), 2811--2834.

\bibitem{Fang} X.-S. Li, S.-C. Fang, On the entropic regularization method for solving min-max problems with applications. Mathematical Methods of Operations Research 46 (1997), 119--130. 

\bibitem{mordubook} B.S. Mordukhovich, Variational Analysis and Generalized Differentiation, I: Basic Theory, Springer, Berlin, 2006.

\bibitem{moreaudecomp} J.J. Moreau, D\'ecomposition orthogonale d'un espace hilbertien selon deux c\^ones mutuellement polaires, Comptes rendus hebdomadaires des s\'eances de l'Acad\'emie des sciences, Gauthier-Villars, 255 (1962), pp.238--240. 

\bibitem{moreau1} J.J. Moreau, Rafle par un convexe variable, I, Trav. Semin. d'Anal. Convexe, Montpellier 1, Expos\'e 15 (1971) 36 pp.

\bibitem{moreau2} J.J. Moreau, Rafle par un convexe variable, II, Trav. Semin. d'Anal. Convexe, Montpellier 2, Expos\'e 3 (1972) 43 pp.

\bibitem{moreau3} J.J. Moreau, Evolution problem associated with a moving convex set in a Hilbert space, J. Differ. Equations 26 (1977), 347--374.

\bibitem{verachadinum} C. Nour, V. Zeidan, Numerical solution for a controlled nonconvex sweeping process, IEEE Control Systems Letters, vol. 6, (2022), 1190--1195.

\bibitem{VCpaper} C. Nour, V. Zeidan, Optimal control of nonconvex sweeping processes with separable endpoints: Nonsmooth maximum principle for local minimizers, J. Differ. Equ. 318 (2022), 113-168.

\bibitem{prt} R.A. Poliquin, R.T. Rockafellar, L. Thibault, Local differentiability of distance functions, Trans. Amer. Math. Soc. 352 (2000), 5231--5249.

\bibitem{rockwet} R.T. Rockafellar, R.J.-B. Wets, Variational analysis, Grundlehren der Mathematischen Wissenschaften, 317, Springer-Verlag, Berlin, 1998.

\bibitem{show} R. E. Showalter, Monotone Operators in Banach Space and Nonlinear Partial Differential Equations, AMS, Mathematical Surveys and Monographs, Volume 49, 1997.

\bibitem{vinter} R.B. Vinter, Optimal Control, Birkh\"{a}user, Systems and Control: Foundations and Applications, Boston, 2000.

\bibitem{verachadi} V. Zeidan, C. Nour, H. Saoud, A nonsmooth maximum principle for a controlled nonconvex sweeping process, J. Differ. Equ. 269 (2020), 9531--9582.

\end{thebibliography}
\end{document}